\newtheorem{theorem}{Theorem}[section]
\newtheorem{proposition}[theorem]{Proposition}
\newtheorem{lemma}[theorem]{Lemma}
\newtheorem{corollary}[theorem]{Corollary}
\newtheorem*{claim*}{Claim}
\theoremstyle{definition}
\newtheorem{algorithm}[theorem]{Algorithm}
\newtheorem{example}[theorem]{Example}
\newtheorem{definition}[theorem]{Definition}
\newtheorem{remark}[theorem]{Remark}
\numberwithin{equation}{section} \numberwithin{figure}{section}
\numberwithin{table}{section}
\def\Z{\mathbb Z}
\def\C{\mathbb C}
\newcommand{\nc}{\newcommand}
\nc{\SG}{\mathfrak{S}}
\nc{\PCT}{\mathrm{PCT}}
\nc{\SPCT}{\mathrm{SPCT}}
\nc{\RT}{\mathrm{RT}}
\nc{\SRT}{\mathrm{SRT}}
\nc{\RCT}{\mathrm{RCT}}
\nc{\SRCT}{\mathrm{SRCT}}
\nc{\stan}{\mathrm{stan}}
\nc{\Span}{\mathrm{span}}
\nc{\comp}{\mathrm{comp}}
\nc{\rmst}{\mathrm{st}}
\nc{\Des}{\mathrm{Des}}
\nc{\set}{\mathrm{set}}
\nc{\wt}{\mathrm{wt}}
\nc{\ch}{\mathrm{ch}}
\nc{\id}{\mathrm{id}}
\nc{\lex}{\mathrm{lex}}
\nc{\Qsym}{\mathrm{QSym}}
\nc{\sh}{\mathrm{sh}}
\nc{\bfS}{\mathbf{S}}
\nc{\bfF}{\mathbf{F}}
\nc{\calS}{\mathcal{S}}
\nc{\alphamax}{\alpha_{\rm max}}
\nc{\brho}{\overline{\rho}}
\nc{\bphi}{\overline{\phi}}
\nc{\calV}{\mathcal{V}}
\nc{\calR}{\mathcal{R}}
\nc{\calG}{\mathcal{G}}
\nc{\ineq}{\preceq}
\nc{\ineqta}{\ineq^t_\alpha}
\nc{\tal}{\widetilde{\alpha}}
\nc{\tbe}{\widetilde{\beta}}
\nc{\tS}{\widetilde{\bfS}}
\nc{\oT}{\overline{T}}
\nc{\oU}{\overline{U}}
\nc{\hpi}{\pi}
\nc{\opi}{\overline{\pi}}
\nc{\calP}{\mathcal{P}}
\nc{\calC}{\mathcal{C}}
\nc{\rmtop}{\mathrm{top}}
\nc{\rad}{\mathrm{rad}}
\nc{\bfP}{\mathbf{P}}
\nc{\SET}{\mathrm{SET}}
\nc{\rev}{\mathrm{r}}
\nc{\Th}{\theta}
\nc{\Thact}{*}
\nc{\ThbfP}{\Th[\bfP_\alpha]}
\nc{\hThbfP}{\ThbfP}
\nc{\hThbfPt}{\widehat{\Th[\bfP_{\alpha^t}]}}
\nc{\hPhi}{\widehat{\Phi}}
\nc{\hPsi}{\widehat{\Psi}}
\nc{\htau}{\widehat{\tau}}
\nc{\otau}{\overline{\tau}}
\nc{\mpsi}{\psi}
\nc{\tab}{\tau}
\nc{\lew}{\le_{\mathrm{w}}}
\nc{\tcd}{\mathtt{cd}}
\nc{\trd}{\mathtt{rd}}
\nc{\rmr}{\mathrm{r}}
\nc{\rmc}{\mathrm{c}}
\nc{\rmt}{\mathrm{t}}
\nc{\TOP}{\mathrm{top}}
\nc{\SOC}{\mathrm{soc}}
\nc{\bubact}{\,\scalebox{0.6}{$\bullet$}\,}
\nc{\col}{\rm col}
\nc{\row}{\rm row}
\nc{\calE}{\mathcal{E}}
\nc{\calEsa}{\mathcal{E}^\sigma_\alpha}
\nc{\bsig}{\boldsymbol{\sigma}}
\nc{\tauC}{\tau_{\scalebox{0.5}{$C$}}}
\nc{\htauC}{\widehat{\tau}_{\scalebox{0.5}{$C$}}}
\nc{\kappaC}{\kappa_{\scalebox{0.5}{$C$}}}
\nc{\hkappaC}{\widehat{\kappa}_{\scalebox{0.5}{$C$}}}
\nc{\mfPa}{\mathfrak{P}_\alpha}
\nc{\rma}{\mathrm{a}}
\nc{\rmd}{\mathrm{d}}
\nc{\SPCTsa}{\SPCT^\sigma(\alpha)}
\nc{\bfSsa}{\bfS_\alpha^\sigma}
\nc{\hPhisa}{\widehat{\Phi}^\sigma_\alpha}
\nc{\bfSsaC}{{\bfS}^\sigma_{\alpha,C}}
\nc{\ra}{\rightarrow}
\tikzset{
>=stealth',
punktchain/.style={
    rectangle, 
    rounded corners, 
    draw=black, very thick,
    text width=11em, 
    minimum height=3em, 
    text centered, 
    on chain,
    scale=0.8},
small punktchain/.style={
    rectangle, 
    rounded corners, 
    draw=black, very thick,
    text width=8em, 
    minimum height=3em, 
    text centered, 
    on chain,
    scale=0.8},
line/.style={draw, thick, <-},
  element/.style={
    tape,
    top color=white,
    bottom color=blue!50!black!60!,
    minimum width=8em,
    draw=blue!40!black!90, very thick,
    text width=7em, 
    minimum height=4em, 
    text centered, 
    on chain},
  every join/.style={<-, thick,shorten >=1pt},
  decoration={brace},
  tuborg/.style={decorate},
  tubnode/.style={midway, right=2pt},
}
\nc{\yh}[1]{\todo[size=\tiny,color=blue!10]{#1 \\ \hfill --- Young-Hun}}
\nc{\YH}[1]{\todo[size=\tiny,inline,color=blue!10]{#1
		\\ \hfill --- Young-Hun}}
\nc{\sy}[1]{\todo[size=\tiny,color=magenta!10]{#1 \\ \hfill --- Sun-Young}}
\nc{\SY}[1]{\todo[size=\tiny,inline,color=magenta!10]{#1
		\\ \hfill --- Sun-Young}}
\nc{\nt}[1]{\todo[size=\tiny,color=red!40]{#1 \\ \hfill --- Note}}
\nc{\NT}[1]{\todo[size=\tiny,inline,color=red!40]{#1
		\\ \hfill --- Note}}
\newenvironment{red}{\relax\color{red}}{\relax}
\newenvironment{blue}{\relax\color{blue}}{\hspace*{.5ex}\relax}
\newenvironment{magenta}{\relax\color{magenta}}{\hspace*{.5ex}\relax}
\nc{\ber}{\begin{red}}
\nc{\er}{\end{red}}
\nc{\beb}{\begin{blue}}
\nc{\eb}{\end{blue}}
\nc{\bema}{\begin{magenta}}
\nc{\ema}{\end{magenta}}
\title[Modules of the $0$-Hecke algebra]{Modules of the $0$-Hecke algebra arising from standard permuted composition tableaux}
\author[S.-I. Choi]{Seung-Il Choi}
\address{Research Institute of Mathematics, Seoul National University, Seoul 08826, Republic of Korea}
\email{ignatioschoi@snu.ac.kr}
\author[Y.-H. Kim]{Young-Hun Kim}
\address{Department of Mathematics, Sogang University, Seoul 04107, Republic of Korea \& Research Institute for Basic Science, Sogang University, Seoul 04107, Republic of Korea}
\email{yhkim14@sogang.ac.kr}
\author[S.-Y. Nam]{Sun-Young Nam}
\address{Department of Mathematics, Sogang University, Seoul 04107, Republic of Korea}
\email{synam.math@gmail.com}
\author[Y.-T. Oh]{Young-Tak Oh}
\address{Department of Mathematics, Sogang University, Seoul 04107, Republic of Korea \& Korea Institute for Advanced Study, Seoul 02455, Republic of Korea}
\email{ytoh@sogang.ac.kr}
\thanks{The first author was supported by the National Research Foundation of Korea(NRF) grant funded by the Korea government(MSIT) 
(No. NRF-2019R1C1C1010668).}
\thanks{The second and fourth authors were supported by NRF grant funded by the Korean Government (No. NRF-2018R1D1A1B07051048).}
\thanks{
The third author was supported by Basic Science Research Program through NRF funded by the Ministry of Education (No. NRF-2019R1I1A1A01062658).}
\keywords{$0$-Hecke algebra, permuted composition tableau, quasisymmetric characteristic, quasisymmetric Schur function, projective cover}
\date{\today}
\subjclass[2010]{20C08, 05E05, 05E10}
\begin{document}

\maketitle
	
\begin{abstract}
We study the $H_n(0)$-module $\bfS^\sigma_\alpha$ due to Tewari and van Willigenburg, 
which was constructed using new combinatorial objects called standard permuted composition tableaux
and decomposed into cyclic submodules.
First, we show that every direct summand appearing in their decomposition is indecomposable
and characterize when $\bfS^\sigma_\alpha$ is indecomposable. 
Second, we find characteristic relations among $\bfSsa$'s and expand the image of $\bfSsa$ under the quasi characteristic in terms of quasisymmetric Schur functions. 
Finally, we show that the canonical submodule of $\bfSsa$ appears as a homomorphic image of a projective indecomposable module.
\end{abstract}

\section{Introduction}
Let $(W,S)$ be a finite Coxeter system and $H_W(q)$ the corresponding Hecke algebra defined over $\mathbb C$ with indeterminate $q$.
The $0$-Hecke algebra $H_W(0)$ is the degenerate form of $H_W(q)$ obtained by the specialization $q=0$. In case where $W$ is the symmetric group $\SG_n$ and $S$ the set of simple transpositions, we simply write it as $H_n(0)$.   
It is well known that $H_n(0)$ is of tame representation type for $n=3$ and 
of wild representation type for $n\ge 4$ (see \cite{11BG,02DHT}).

It was Norton~\cite{79Norton} who completely classified all projective indecomposable $H_n(0)$-modules (for short, PIMs) and simple $H_n(0)$-modules, up to equivalence. These modules correspond in a natural way with subsets of $[n-1]$, equivalently with compositions of $n$. 
Letting $\calP_I$ be the PIM associated to each $I \subseteq [n-1]$ and $\rad \; \calP_I$ the radical of $\calP_I$, the top of $\calP_I$, that is, $\rmtop(\calP_I) := \calP_I / \rad \; \calP_I$ is simple and every simple $H_n(0)$-module appears in this way.

The representation theory of $0$-Hecke algebras (of type A) has strong connections to quasisymmetric functions.
The most striking feature might be that the Grothendieck ring is isomorphic to
$\Qsym$, the ring of quasisymmetric functions,
which is a quasi-analogue of the Frobenius correspondence between characters of symmetric groups and symmetric functions.
The isomorphism is called the {\em quasisymmetric characteristic} under which $\rmtop(\calP_I)$ is mapped to $F_{\comp(I)}$, where $\comp(I)$ is the composition corresponding to $I$ and $F_{\comp(I)}$ is the fundamental quasisymmetric function attached to $\comp(I)$ (see \cite{96DKLT}).

Suppose that we have a noteworthy basis for $\Qsym$.
From the viewpoint of the Frobenius correspondence, it would be quite natural to ask whether basis elements appear as the image of module
isomorphism classes under the quasisymmetric characteristic. 
This work has been done for quasisymmetric Schur functions~\cite{15TW}, quasisymmetric functions forming the dual immaculate basis~\cite{15BBSSZ}, and extended Schur functions~\cite{19Searles}. 

Quasisymmetric Schur functions were introduced in~\cite{11HLMW} as an analogue of Schur functions. 
Given a composition $\alpha$ of $n$, denoted by $\alpha \models n$, the quasisymmetric Schur function $\calS_\alpha$ is defined by
$\sum_\gamma \mathcal{A}_\gamma$,
where the sum is over weak compositions $\gamma$ corresponding to $\alpha$
and $A_\gamma$ is the Demazure atom.
Later, Tewari and van Willigenburg~\cite{15TW} showed that it can be written as
\begin{equation*}
\sum_{\tau \in \SRCT(\alpha)} F_{\comp(\tau)},
\end{equation*}
where $\SRCT(\alpha)$ is the set of standard reverse composition tableaux of shape $\alpha$
and $\comp(\tau)$ is the composition of $n$ corresponding to the set of all descents of $\tau$. 
They form a basis for $\Qsym$ 
and, as inferred from the name, they refine many properties of Schur functions 
such as Pieri rule, Kostka numbers, and Littlewood--Richardson rule.
For quasi-versions for these themes, refer to~\cite{11BLW, 11HLMW, 11HLMW2, 13LMvW}. 

Considering the fact that each $\rmtop(\calP_I)$ is 1-dimensional and its characteristic image is $F_{\comp(I)}$, one can naively expect that the $\mathbb C$-span of $\SRCT(\alpha)$ is a feasible candidate
for a module whose characteristic image is given by $\calS_\alpha$. 
This expectation turns out to be affirmative by Tewari and van Willigenburg~\cite{15TW} who successfully defined an $H_n(0)$-action on $\SRCT(\alpha)$ and showed that the resulting $H_n(0)$-module, denoted by  $\bfS_\alpha$, has the desired property. 
Further, they revealed numerous interesting properties concerning these modules.

One can give a colored graph structure on $\SRCT(\alpha)$ by letting 
$$\tau_1 \overset{i}{\to} \tau_2 \quad \text{if and only if} \quad \tau_2=\pi_i \cdot \tau_1.$$
Here $\pi_i$ is a generator of $H_n(0)$ satisfying~\eqref{eq: Hecke gen}.
Every connected component has a unique source tableau and a unique sink tableau.
Let $\calE(\alpha)$ be the set of connected components
and $\bfS_{\alpha,E}$ be the $H_n(0)$-submodule of $\bfS_\alpha$ whose underlying space is the $\C$-span of $E$ for $E \in \calE(\alpha)$.
As $H_n(0)$-modules, we have
\begin{align*}
\bfS_\alpha \cong \bigoplus_{E \in \mathcal{E}(\alpha)} \bfS_{\alpha,E}.
\end{align*}
It was shown in \cite{15TW} that the component 
$$
E_\alpha :=\{\tau\in \SRCT(\alpha) \mid \text{entries in each column of $\tau$ increase from top to bottom}\}
$$
is equipped with notable properties such as 
$\bfS_{\alpha,E_\alpha}$ is indecomposable and $\{\ch([\bfS_{\alpha,E_\alpha}]) \mid \alpha \models n\}$ forms a basis for the homogeneous component $\Qsym_n$ of degree $n$ in $\Qsym$. 
Here $\ch$ denotes the quasisymmetric characteristic.
The indecomposability problem has been completely settled by K\"onig by showing that $\bfS_{\alpha,E}$ is indecomposable for every $E \in \mathcal{E}(\alpha)$ (see~\cite[Theorem 4.11]{19Konig}).

Recently, Tewari and van Willigenburg \cite{19TW} introduced new 
combinatorial objects, called standard permuted composition tableaux, by weakening the condition on the first column in standard reverse composition tableaux.
Given $\alpha \models n$ and $\sigma \in \SG_{\ell(\alpha)}$, a \emph{standard permuted composition tableau} of shape $\alpha$ and type $\sigma$ differs from a standard reverse composition tableau in that 
the standardization of the word obtained by reading the first column from top to bottom is $\sigma$.
Here $\ell(\alpha)$ is the length of $\alpha$.

In the following, let $\alpha \models n$ and $\sigma \in \SG_{\ell(\alpha)}$. 
Let $\SPCT^\sigma(\alpha)$ be the set of standard permuted composition tableaux of shape $\alpha$ and type $\sigma$. 
It was shown in~\cite{19TW} that $\SPCT^\sigma(\alpha)$ has an $H_n(0)$-action and the resulting $H_n(0)$-module, denoted by  $\bfS_\alpha^\sigma$, shares certain properties with $\bfS_\alpha$. 
For instance, in the same way as in \cite{15TW}, one can give a colored graph structure on $\SPCT^\sigma(\alpha)$.
Let $\calE^\sigma(\alpha)$ be the set of connected components and 
$\bfS^\sigma_{\alpha,E}$ be the $H_n(0)$-submodule of $\bfS^\sigma_\alpha$ whose underlying space is the $\C$-span of $E$ for $E \in \calE^\sigma(\alpha)$.
As $H_n(0)$-modules, we have
\begin{align*}
\bfS_\alpha^{\sigma} \cong \bigoplus_{E \in \mathcal{E}^\sigma(\alpha)} \bfS_{\alpha,E}^{\sigma}.
\end{align*}
Compared to $\bfS_\alpha$, the structure of $\bfS_\alpha^{\sigma}$ has not been understood well up to now,
which will be the main concern of the present paper. 
We here demonstrate some noteworthy properties. 

The first objective of this paper concerns 
the indecomposability of $\bfS^{\sigma}_{\alpha,E}$ and of $\bfS^{\sigma}_{\alpha}$.
We show that under suitable adjustments, K\"onig's method~\cite{19Konig} still works in our case.
This leads us to conclude that 
for every $E \in \mathcal{E}^{\sigma}(\alpha)$, the $H_n(0)$-module $\bfS^{\sigma}_{\alpha,E}$ is indecomposable
(Theorem~\ref{thm: eq class indecomp}).
To avoid excessive overlap with K\"onig's proof,
we present the roadmap to the proof of~\cite[Theorem 4.11]{19Konig} and our adjustments for the proof of Theorem~\ref{thm: eq class indecomp} instead of giving the full verification in Appendix~\ref{sec: indecomp}.

We say that $\bfS^{\sigma}_{\alpha}$ is an {\em SPCT-cyclic} $H_n(0)$-module if it is cyclically generated by a single standard permuted composition tableau.
Thanks to Theorem \ref{thm: eq class indecomp}, $\bfS^{\sigma}_{\alpha}$ is SPCT-cyclic if and only if it is  indecomposable.
The former condition is equivalent to saying that $\SPCT^\sigma(\alpha)$ contains exactly one source tableau since every connected component of $\SPCT^\sigma(\alpha)$ has a unique source.
Whenever $\alpha$ is \emph{$\sigma$-compatible}, then $\SPCT^\sigma(\alpha)$ is not empty (Proposition~\ref{Prop: comparibility}) and
there always exists one particular source tableau in $\SPCT^\sigma(\alpha)$, which is referred as \emph{the canonical source tableau} in this paper. 
Indeed it is an SPCT-analogue of the source tableau in $E_\alpha$.
We characterize when $\bfS^{\sigma}_{\alpha}$ is indecomposable by using the notion of \emph{$\sigma$-simplicity}.
To do this, we see that $\alpha$ is $\sigma$-simple if and only if the canonical source tableau is a unique source tableau in $\SPCT^\sigma(\alpha)$.
In the special case where $\sigma$ is the longest element $w_0$ in $\SG_{\ell(\alpha)}$,
the characterization says that
$\bfS^{w_0}_{\alpha}$ is a nonzero SPCT-cyclic $H_n(0)$-module if and only if $\alpha$ is a partition of $n$ (Theorem \ref{thm: SPCT-cyclic} and Corollary \ref{Coro318}).

The second objective is to find characteristic relations among $\bfSsa$'s and, using those, expand $\ch([\bfSsa])$ in terms of quasisymmetric Schur functions. 
Let us briefly explain our approach.    
Using the maps $\brho_{\sigma}$ and $\bphi_{\sigma}$ given in \cite{19TW},
we define a bijection $\mpsi^{\sigma_2}_{\sigma_1}$ between 
$\coprod_{\tal = \lambda} \SPCT^{\sigma_1}(\alpha)$ and $\coprod_{\tal = \lambda} \SPCT^{\sigma_2}(\alpha)$ for a partition $\lambda$ and $\sigma_1, \sigma_2 \in \SG_{\ell(\lambda)}$.
Then we show that if there is $1 \le i \le n-1$ such that $\ell(\sigma s_i) < \ell(\sigma)$, then 
\[
\mpsi^{\sigma s_i}_{\sigma}(\SPCTsa) = \coprod_{\alpha = \beta \bubact \pi_{i}} \SPCT^{\sigma s_i}(\beta),
\]
which can also be rephrased as 
\begin{align*}
\ch([\bfSsa]) = \sum_{\alpha = \beta \bubact \hpi_{i}} \ch([\bfS^{\sigma s_i}_\beta]).
\end{align*}
Here $\bubact$ is an $H_n(0)$-action defined on the set of compositions of length $n$ as in~\eqref{eq: bubble sorting operator}.  
Applying this relation repeatedly leads to the multiplicity-free expansion
\begin{align*}
\ch([\bfSsa]) &= \sum_{\alpha = \beta \bubact \hpi_{\sigma}} \calS_{\beta}.
\end{align*}
If $\alpha$ is a partition and $\sigma=w_0$,
then we have $\ch([\bfS_\alpha^{w_0}]) = s_\alpha$,
where $s_\alpha$ is the Schur function.
As a byproduct of this expansion, we construct a new $\Z$-basis of $\Qsym_n$ given by
\[
\{ \ch([\bfS_\lambda^{\sigma}]) \mid \lambda \vdash n \text{ and } \sigma \in \SG_{\ell(\lambda)}/ (\SG_{\ell(\lambda)})_{\lambda} \},
\]
where $(\SG_{\ell(\lambda)})_{\lambda} = \{\sigma \in \SG_{\ell(\lambda)} \mid \lambda \cdot \sigma = \lambda\}$ (Theorem \ref{thm: charcteristic of SPCT}, Corollary \ref{cor: qschur exp}, and Corollary~\ref{cor: basis for Qsym}). 

To state the final objective, 
let us consider the connected component $C \in \mathcal{E}^\sigma(\alpha)$ containing the canonical source tableau.
We call ${\bfS}^\sigma_{\alpha,C}$ the \emph{canonical submodule of $\bfSsa$}.
The third objective, which is most interesting from the representation-theoretical viewpoint,     
is to find the projective cover of ${\bfS}^\sigma_{\alpha,C}$.
Indeed it turns out to be a PIM.
To achieve our purpose, we construct a surjective $H_n(0)$-module homomorphism   
from $\calP_{\set(\alpha\cdot\sigma^{-1})}$
to $\bfS^{\sigma}_{\alpha,C}$, where $\set(\alpha\cdot\sigma^{-1})$ is the subset of $[n-1]$ corresponding to $\alpha\cdot\sigma^{-1}$ (Theorem \ref{Thm:main Sec5}).
This map is more or less simply defined and the kernel can be explicitly written.
In the special case where $\alpha = (\alpha_1, \ldots, \alpha_\ell)$ is a partition and $\sigma=w_0$, 
the composition $\alpha\cdot w_0^{-1}$ is nothing but $\alpha^\rev := (\alpha_\ell, \ldots, \alpha_1)$, thus  
$\bfS^{w_0}_\alpha$ is a homomorphic image of $\calP_{\set(\alpha^\rev)}$.
As a significant consequence of Theorem \ref{Thm:main Sec5}, we derive that   
$\bfS^{\sigma}_{\alpha,C}$ is projective if and only if 
either $\alpha = (1,1,\ldots, 1)$ or 
$\alpha = (k,1,1,\ldots,1)$ for some $k \ge 2$ and $\sigma(1)=\ell(\alpha)$.
In this case, $\alpha$ is $\sigma$-simple and $\bfS^{\sigma}_{\alpha}$ is isomorphic to $\calP_{\set(\alpha^\rmr)}$ (Corollary \ref{projective cover is projective}).

This paper is organized as follows.
In Section~\ref{Preliminaries}, we introduce the prerequisites 
on the $0$-Hecke algebra, the quasisymmetric characteristic, standard permuted composition tableaux
and an $H_n(0)$-module arising from them.
In Section~\ref{sec: Indecomposability}, we deal with 
the indecomposability of $\bfS^{\sigma}_{\alpha,E}$ and of $\bfS^{\sigma}_{\alpha}$.
The roadmap to the proof of the indecomposableness of $\bfS^{\sigma}_{\alpha,E}$ is contained in~Appendix~\ref{sec: indecomp}.
In Section~\ref{sec: Chac ral and quasi Schur exp},
we provide characteristic relations among $\bfSsa$'s and the expansion of $\ch([\bfSsa])$ in terms of quasisymmetric Schur functions.
Section~\ref{Sect5} is devoted to the study of the projective cover of the canonical submodule of $\bfSsa$.

\section{Preliminaries}\label{Preliminaries}

In this section, $n$ denotes a nonnegative integer. Define $[n]$ to be $\{1,\ldots, n\}$ if $n > 0$ and $\emptyset$ else. In addition, we set $[-1]:=\emptyset$.

\subsection{Compositions and partitions}
A \emph{composition} $\alpha$ of a nonnegative integer $n$, denoted by $\alpha \models n$, is a finite ordered list of positive integers $(\alpha_1,\ldots, \alpha_k)$ satisfying $\sum_{i=1}^k \alpha_i = n$.
For each $1 \le i \le k$, let us call $\alpha_i$ a \emph{part} of $\alpha$. And we call $k =: \ell(\alpha)$ the \emph{length} of $\alpha$ and $n =:|\alpha|$ the \emph{size} of $\alpha$. For convenience we define the empty composition $\emptyset$ to be the unique composition of size and length $0$.

Given $\alpha = (\alpha_1,\ldots,\alpha_k) \models n$ and $I = \{i_1 < \cdots < i_k\} \subset [n-1]$, 
let 
\begin{align*}
&\set(\alpha) := \{\alpha_1,\alpha_1+\alpha_2,\ldots, \alpha_1 + \alpha_2 + \cdots + \alpha_{k-1}\}, \\
&\comp(I) := (i_1,i_2 - i_1,\ldots,n-i_k).
\end{align*}
The set of compositions of $n$ is in bijection with the set of subsets of $[n-1]$ under the correspondence $\alpha \mapsto \set(\alpha)$ (or $I \mapsto \comp(I)$).

If $\alpha = (\alpha_1,\ldots, \alpha_k) \models n$ is such that $\alpha_1 \ge \cdots \ge \alpha_k$, then we say that $\alpha$ is \emph{partition} of $n$ and denote this by $\alpha \vdash n$. The partition obtained by sorting the parts of $\alpha$ in weakly decreasing order is denoted by $\tal$.

Let $\alpha = (\alpha_1,\ldots, \alpha_k) \models n$. We define the \emph{composition diagram} $\tcd(\alpha)$ of $\alpha$ as a left-justified array of $n$ boxes where the $i$th row from the top has $\alpha_i$ boxes for $1 \le i \le k$.
We also define the \emph{ribbon diagram} $\trd(\alpha)$ of $\alpha$ as the connected skew diagram without $2 \times 2$ boxes, such that the $i$th row from the bottom has $\alpha_i$ boxes. 
For $1 \le i < k$, the leftmost box of the $(i+1)$st row is stacked above the rightmost box of the $i$th row from the bottom.
For instance, if $\alpha = (1,3,2)$, then
\[
\tcd(\alpha) = 
\begin{array}{c}
\begin{ytableau}
~ \\
~ & ~ & ~ \\
~ & ~
\end{ytableau}
\end{array}
\quad \text{and} \quad
\trd(\alpha) = 
\begin{array}{c}
\begin{ytableau}
 \none & \none & ~ & ~ \\
 ~ & ~ & ~ \\
 ~
\end{ytableau}
\end{array}.
\]
A box is said to be in the $i$th row if it is in the $i$th row from the top, and in the $j$th column if it is in the $j$th column from the left.
We use $(i,j)$ to denote the box in the $i$th row and $j$th column.

For a composition $\alpha = (\alpha_1,\ldots,\alpha_k) \models n$, let $\alpha^\rmr$ denote the composition $(\alpha_k, \ldots, \alpha_1)$ and $\alpha^\rmc$ the unique composition satisfying $\set(\alpha^c) = [n-1] \setminus \set(\alpha)$, and let $\alpha^\rmt := (\alpha^\rmr)^\rmc = (\alpha^\rmc)^\rmr$. Note that $\trd(\alpha^\rmt)$ is obtained by reflecting $\trd(\alpha)$ along the diagonal. 

\subsection{The $0$-Hecke algebra and the quasisymmetric characteristic}
To begin with, we recall that the symmetric group $\SG_n$ is generated by simple transpositions $s_i := (i,i+1)$ with $1 \le i \le n-1$.
An expression for $\sigma \in \SG_n$ of the form $s_{i_1} \cdots s_{i_p}$ that uses the minimal number of simple transpositions is called a \emph{reduced expression} for $\sigma$. 
The number of simple transpositions in any reduced expression for $\sigma$, denoted by $\ell(\sigma)$, is called the \emph{length} of $\sigma$.

The $0$-Hecke algebra $H_n(0)$ is the $\C$-algebra generated by $\hpi_1,\ldots,\hpi_{n-1}$ subject to the following relations:
\begin{align}\label{eq: Hecke gen}
\begin{aligned}
\hpi_i^2 &= \hpi_i \quad \text{for $1\le i \le n-1$},\\
\hpi_i \hpi_{i+1} \hpi_i &= \hpi_{i+1} \hpi_i \hpi_{i+1}  \quad \text{for $1\le i \le n-2$},\\
\hpi_i \hpi_j &=\hpi_j \hpi_i \quad \text{if $|i-j| \ge 2$}.
\end{aligned}
\end{align}
For each $1 \le i \le n-1$, let $\opi_i := \hpi_i -1$.  Then $\{\opi_i \mid i = 1,\ldots,n-1\}$ is also a generating set of $H_n(0)$, which satisfies the following relations:
\begin{align*}
\opi_i^2 &= -\opi_i \quad \text{for $1\le i \le n-1$},\\
\opi_i \opi_{i+1} \opi_i &= \opi_{i+1} \opi_i \opi_{i+1}  \quad \text{for $1\le i \le n-2$},\\
\opi_i \opi_j &=\opi_j \opi_i \quad \text{if $|i-j| \ge 2$}.
\end{align*}

Pick up any reduced expression $s_{i_1}\cdots s_{i_p}$ for a permutation $\sigma \in \SG_n$. Then the elements $\hpi_{\sigma}$ and $\opi_{\sigma}$ of $H_n(0)$ are defined by
\[
\hpi_{\sigma} := \hpi_{i_1} \cdots \hpi_{i_p} \quad \text{and} \quad \opi_{\sigma} := \opi_{i_1} \cdots \opi_{i_p}.
\]
It is well known that these elements are independent of the choice of reduced expressions, and both $\{\hpi_\sigma \mid \sigma \in \SG_n\}$ and $\{\opi_\sigma \mid \sigma \in \SG_n\}$ are bases for $H_n(0)$.

Let $\calR(H_n(0))$ denote the $\Z$-span of the isomorphism classes of finite dimensional representations of $H_n(0)$. The isomorphism class corresponding to an $H_n(0)$-module $M$ will be denoted by $[M]$. The \emph{Grothendieck group} $\calG_0(H_n(0))$ is the quotient of $\calR(H_n(0))$ modulo the relations $[M] = [M'] + [M'']$ whenever there exists a short exact sequence $0 \ra M' \ra M \ra M'' \ra 0$. The irreducible representations of $H_n(0)$ form a free $\Z$-basis for $\calG_0(H_n(0))$. Let
\[
\calG := \bigoplus_{n \ge 0} \calG_0(H_n(0)).
\]
According to \cite{79Norton}, there are $2^{n-1}$ distinct irreducible representations of $H_n(0)$. They are naturally indexed by compositions of $n$. Let $\bfF_{\alpha}$ denote the $1$-dimensional $\C$-vector space corresponding to the composition $\alpha \models n$, spanned by a vector $v_{\alpha}$. 
For each $1\le i \le n-1$, define an action of the generator $\hpi_i$ of $H_n(0)$ as follows:
\[
\hpi_i(v_\alpha) = \begin{cases}
0 & i \in \set(\alpha),\\
v_\alpha & i \notin \set(\alpha).
\end{cases}
\]
Then $\bfF_\alpha$ is an irreducible $1$-dimensional $H_n(0)$-representation.

In the following, let us review the connection between $\calG$ and quasisymmetric functions.
Quasisymmetric functions are power series of bounded degree in variables $x_{1},x_{2},x_{3},\ldots$  with coefficients in $\Z$, which are shift invariant in the sense that the coefficient of the monomial $x_{1}^{\alpha _{1}}x_{2}^{\alpha _{2}}\cdots x_{k}^{\alpha _{k}}$ is equal to the coefficient of the monomial $x_{i_{1}}^{\alpha _{1}}x_{i_{2}}^{\alpha _{2}}\cdots x_{i_{k}}^{\alpha _{k}}$ for any strictly increasing sequence of positive integers $i_{1}<i_{2}<\cdots <i_{k}$ indexing the variables and any positive integer sequence $(\alpha _{1},\alpha _{2},\ldots ,\alpha _{k})$ of exponents.

$\Qsym$ is a graded $\Z$-algebra, decomposing as
\[
\Qsym =\bigoplus _{n\geq 0} \Qsym_{n},
\]
where $\Qsym_{n}$ is the $\Z$-span of all quasisymmetric functions that are homogeneous of degree $n$.

Given a composition $\alpha$, the \emph{fundamental quasisymmetric function} $F_\alpha$ is defined by $F_\emptyset = 1$ and
\[
F_\alpha = \sum_{\substack{1 \le i_1 \le \cdots \le i_k \\ i_j < i_{j+1} \text{ if } j \in \set(\alpha)}} x_{i_1} \cdots x_{i_k}.
\]
For every nonnegative integer $n$, it is known that $\{F_\alpha \mid \alpha \models n\}$ is a basis for $\Qsym_n$.

It was shown in \cite{96DKLT} that, when $\calG$ is equipped with induction product, the linear map
\begin{align*}
\ch : \calG \ra \Qsym, \quad [\bfF_{\alpha}] \mapsto F_{\alpha},
\end{align*}
called \emph{quasisymmetric characteristic}, is a ring isomorphism.
It follows from the definition of Grothendieck group that 
if we have a short exact sequence $0 \ra M' \ra M \ra M'' \ra 0$ of finite dimensional $H_n(0)$-modules then
\[
\ch([M]) = \ch([M']) + \ch([M'']).
\]

\subsection{Projective indecomposable $H_n(0)$-modules}\label{subsec: PIM}

For $I \subseteq [n-1]$, let $I^\rmc := [n-1] \setminus I$. The \emph{parabolic subgroup} $\SG_{n,I}$ is the subgroup of $\SG_n$ generated by $\{s_i \mid i\in I\}$. We denote by $w_0(I)$ the longest element in $\SG_{n,I}$. 
Norton~\cite{79Norton} decomposed the regular representation of $H_n(0)$ into the direct sum of $2^{n-1}$ indecomposable submodules $\calP_I$ $(I \subseteq [n-1])$, which are defined by
\begin{align*}
\calP_I := H_n(0)\cdot \opi_{w_0(I)}\hpi_{w_0(I^\rmc)}.
\end{align*}
Let $\rmtop(\calP_I)$ denote the \emph{top} of $\calP_I$, that is, $\rmtop(\calP_I) := \calP_I / \rad \; \calP_I$.
Here $\rad \; \calP_I$ is the \emph{radical} of $\calP_I$, the intersection of maximal submodules of $\calP_I$.
It is known that, for each $I \subseteq [n-1]$, $\rmtop(\calP_I)$ is isomorphic to $\bfF_\alpha$ with $\set(\alpha) = I$.
The set $\{\calP_I \mid I \subseteq [n-1]\}$ is a complete list of non-isomorphic projective indecomposable $H_n(0)$ modules.

The $H_n(0)$-module $\calP_I$ can be described in a combinatorial way (see \cite{06HNT, 97KT}). To explain this, let us introduce \emph{standard ribbon tableaux} and an $H_n(0)$-action on them.

\begin{definition}
For $\alpha \models n$, a \emph{standard ribbon tableau} of shape $\alpha$ is a filling of $\trd(\alpha)$ by $1,2,\ldots,n$ without repetition such that 
entries in each row increase from left to right and entries in each column increase from top to bottom.
\end{definition}
Let $\SRT(\alpha)$ denote the set of all standard ribbon tableaux of shape $\alpha$. 
We denote by $\bfP_\alpha$ the $\C$-span of $\SRT(\alpha)$.
The $H_n(0)$-action on $\bfP_\alpha$ is defined by
\begin{align*}
\opi_i \cdot T = \begin{cases}
-T, & \text{if $i$ is in a higher row of $T$ than $i+1$},\\
0, & \text{if $i$ is in the same row of $T$ as $i+1$},\\
s_i \cdot T, & \text{if $i$ is in a lower row of $T$ than $i+1$},
\end{cases}
\end{align*}
for $1\le i \le n-1$ and $T \in \SRT(\alpha)$. 
Here $s_i \cdot T$ is obtained from $T$ by swapping $i$ and $i+1$. 
The following result is due to Huang~\cite{16Huang}.
\begin{theorem}{\rm (\cite[Theorem 3.3.(i)]{16Huang})}\label{Thm22}
Let $\alpha \models n$. Then $\bfP_\alpha$ is isomorphic to $\calP_{\set(\alpha)}$ as an $H_n(0)$-module.
\end{theorem}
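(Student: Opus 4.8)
The plan is to deduce the isomorphism from two ingredients: an equality of dimensions, and a surjection $\calP_{\set(\alpha)} \twoheadrightarrow \bfP_\alpha$. First I would settle the dimension count combinatorially. To each $T \in \SRT(\alpha)$ associate the reading word $w_T \in \SG_n$ obtained by recording the entries of $\trd(\alpha)$ row by row, from the bottom row up, each row left to right. Within a single row the entries strictly increase, so those steps are ascents of $w_T$; at the junction of the $i$th and $(i+1)$st rows the leftmost box of the upper row sits directly above the rightmost box of the lower row, and since the ribbon has no $2\times 2$ subdiagram these junction pairs are the \emph{only} column-comparable pairs, so the column-increasing condition forces a descent at each junction. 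As the junctions occur after $\alpha_1, \alpha_1+\alpha_2, \ldots$ boxes, we get $\Des(w_T) = \set(\alpha)$ for \emph{every} $T$. Conversely, given any $w$ with $\Des(w)=\set(\alpha)$ one fills the ribbon in reading order by $w(1), w(2), \ldots$ and recovers a valid tableau, so $T \mapsto w_T$ is a bijection from $\SRT(\alpha)$ onto $\{w \in \SG_n : \Des(w)=\set(\alpha)\}$. Combined with the fact (from Norton's decomposition of the regular representation) that $\dim \calP_I = \#\{w \in \SG_n : \Des(w)=I\}$, this gives $\dim \bfP_\alpha = \dim \calP_{\set(\alpha)}$.

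Next I would produce the surjection. Put a digraph structure on $\SRT(\alpha)$ with an edge $T \xrightarrow{i} \opi_i \cdot T$ exactly when $\opi_i \cdot T = s_i \cdot T$ is again a tableau (that is, $i$ lies in a strictly lower row of $T$ than $i+1$), mirroring the colored graph of the introduction. The first claim to establish is that this digraph is connected with a unique source $T_\alpha$; granting this, iterating the $\opi_i$ shows $\bfP_\alpha = H_n(0)\cdot T_\alpha$, so $\bfP_\alpha$ is cyclic. The second claim is that $\rmtop(\bfP_\alpha) \cong \bfF_\alpha$. Here I would check that $N := \Span(\SRT(\alpha) \setminus \{T_\alpha\})$ is a submodule: for $T \neq T_\alpha$ the value $\opi_i \cdot T$ lies in $\{-T, 0, s_i \cdot T\}$, and $s_i\cdot T = T_\alpha$ would produce an edge into $T_\alpha$, impossible since $T_\alpha$ is a source; hence $\opi_i \cdot T \in N$. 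The quotient $\bfP_\alpha / N$ is one-dimensional, on which $\opi_i$ acts by $-1$ when $i$ is in a higher row of $T_\alpha$ than $i+1$ and by $0$ otherwise, so $\bfP_\alpha/N \cong \bfF_\gamma$ where $\set(\gamma)$ is the set of such $i$. One must then verify, from the explicit form of the source $T_\alpha$, that $\set(\gamma) = \set(\alpha)$, so that $\bfP_\alpha / N \cong \bfF_\alpha$.

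Once $\bfP_\alpha$ is known to be cyclic with $\rmtop(\bfP_\alpha) \cong \bfF_\alpha$, it is a quotient of the projective cover of $\bfF_\alpha$, which is precisely $\calP_{\set(\alpha)}$; lifting the surjection $\bfP_\alpha \twoheadrightarrow \bfF_\alpha$ through $\calP_{\set(\alpha)}$ and invoking Nakayama yields a surjective homomorphism $\calP_{\set(\alpha)} \twoheadrightarrow \bfP_\alpha$, which the dimension equality of the first step upgrades to an isomorphism. The hard part will be the top computation, namely that $\rad \bfP_\alpha = N$, equivalently that $\mathrm{Hom}_{H_n(0)}(\bfP_\alpha, \bfF_\beta)=0$ for $\beta \neq \alpha$. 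The example $\alpha=(2,1)$ already shows this cannot be localized at the source: the relation $\opi_i \cdot T = 0$ occurring at a \emph{non-source} tableau $T$ (where $i$ and $i+1$ share a row) is exactly what kills the spurious quotients, so the argument must use the global combinatorics of $\SRT(\alpha)$. Establishing connectivity of the digraph, the uniqueness of its source, and the resulting radical computation is the technical heart of the proof.
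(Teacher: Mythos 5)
The paper never proves this theorem itself: Theorem~\ref{Thm22} is imported from Huang \cite[Theorem 3.3.(i)]{16Huang} and used as a black box, so there is no internal proof to compare against and your proposal must stand on its own. Judged that way, your first step is correct and complete: the bottom-to-top reading word is a bijection from $\SRT(\alpha)$ onto $\{w \in \SG_n \mid \Des(w) = \set(\alpha)\}$, and combined with Norton's formula $\dim \calP_I = \#\{w \in \SG_n \mid \Des(w) = I\}$ it gives $\dim \bfP_\alpha = \dim \calP_{\set(\alpha)}$. The closing reduction is also logically sound: \emph{if} one knows $\rmtop(\bfP_\alpha) \cong \bfF_\alpha$, then $\calP_{\set(\alpha)}$ is the projective cover of $\bfP_\alpha$, projectivity plus Nakayama yields a surjection $\calP_{\set(\alpha)} \twoheadrightarrow \bfP_\alpha$, and the dimension count promotes it to an isomorphism.

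The genuine gap is that every load-bearing input to that reduction is asserted rather than proved, as you yourself acknowledge. Concretely: (i) you never exhibit $T_\alpha$ (it is the column-by-column, top-to-bottom filling, the tableau the paper calls $T_0$ in Section~\ref{Sect5}), never show it is a source, and never prove the source is unique; (ii) reachability of every tableau from $T_\alpha$ requires acyclicity of the digraph in addition to uniqueness of the source --- connectivity plus a unique source does not imply reachability in a general digraph; acyclicity is easy here (each edge increases the number of pairs $i<j$ with $i$ in a strictly higher row than $j$), but it must be said; and (iii), decisively, you do not prove $\rad\,\bfP_\alpha = N$, i.e.\ that $\mathrm{Hom}_{H_n(0)}(\bfP_\alpha, \bfF_\beta) = 0$ for all $\beta \neq \alpha$. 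Point (iii) cannot be absorbed by the dimension count: if $\rmtop(\bfP_\alpha)$ contained some $\bfF_\beta$ with $\beta \neq \alpha$, there would be \emph{no} surjection $\calP_{\set(\alpha)} \to \bfP_\alpha$ at all, since a surjection induces a surjection on tops and $\rmtop(\calP_{\set(\alpha)}) \cong \bfF_\alpha$ is simple; your argument as written only shows that $\bfF_\gamma$ is \emph{one} summand of the top, i.e.\ that $\bfP_\alpha$ is a quotient of a possibly larger direct sum of PIMs. Nor are (i) and (iii) routine verifications: they are precisely the ribbon analogues of statements that, for SPCTs, this paper treats as substantial theorems (uniqueness of the source per class is \cite[Lemma 3.7]{19TW}, and indecomposability is K\"onig's theorem, to which all of Appendix~\ref{sec: indecomp} is devoted). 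So what you have is a correct plan whose technical heart is missing. If you want a completion that avoids the radical computation entirely, use your reading-word bijection to match $\SRT(\alpha)$ against Norton's explicit basis $\{\opi_w \hpi_{w_0(I^\rmc)} \mid \Des(w) = I\}$ of $\calP_I$ with $I = \set(\alpha)$, and check that $\opi_i$ acts on these basis elements by the same three-case rule ($-1$, $0$, or a swap) as on ribbon tableaux; this direct identification is essentially what the cited theorem of Huang provides.
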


\subsection{Standard permuted composition tableaux and an $H_n(0)$-module arising from them}\label{subsec: SPCT}
Standard permuted composition tableaux and an $H_n(0)$-action on them were introduced and studied intensively by Tewari and van Willigenburg~\cite{15TW, 19TW}. 

Let $w = w_1 w_2 \cdots w_\ell$ be a word without repeated entries in the alphabet $\{1 < 2 < \cdots\}$.
The \emph{standardization} of $w$ is the unique permutation $\sigma \in \SG_\ell$ satisfying that $\sigma(i) > \sigma(j)$ if and only if $w_{i} > w_{j}$ for all $1 \le i < j \le \ell$.

\begin{definition}\label{def: PCT}
Given $\alpha \models n$ and $\sigma \in \SG_{\ell(\alpha)}$, a \emph{permuted composition tableau} ($\PCT$) of shape $\alpha$ and type $\sigma$ is a filling $\tau$ of $\tcd(\alpha)$ with positive integers such that the following conditions hold:
\begin{enumerate}[label = (\arabic*)]
\item The entries in the first column are all distinct and the standardization of the word obtained by reading the first column from top to bottom is $\sigma$.
\item The entries along the rows decrease weakly when read from left to right.
\item If $i<j$ and $\tau(i,k) > \tau(j,k+1)$, then $\tau(i,k+1)$ exists and $\tau(i,k+1) > \tau(j,k+1)$.
\end{enumerate}
The condition (3) is called the \emph{triple condition}.
In particular, if the entries of $\tau$ are weakly less than $n$ and all distinct, then we call $\tau$ a \emph{standard permuted composition tableau} ($\SPCT$).
\end{definition}

Let $\PCT^\sigma(\alpha)$ (resp. $\SPCTsa$) denote the set of all $\PCT$s (resp. $\SPCT$s) of shape $\alpha$ and type $\sigma$. 
We remark that when $\sigma = \id$, permuted composition tableaux are identical to reverse composition tableaux in~\cite{15TW}, thus $\SPCT^{\id}(\alpha)$ is nothing but $\SRCT(\alpha)$ therein. 
And, we write $\tab_{i,j}$ for $\tab(i,j)$ for  $\tab \in \PCT^\sigma(\alpha)$ and $(i,j) \in \tcd(\alpha)$.
For an SPCT $\tab$, we use the notation $\tau^{-1}(i)$ to denote the box in $\tau$ filled with $i$. 

Let $\tau \in \SPCTsa$. An integer $1 \le i \le n-1$ is a \emph{descent} of $\tau$ if $i+1$ lies weakly right of $i$ in $\tau$. Denote by $\Des(\tau)$ the set of all descents of $\tau$ and set $\comp(\tau) := \comp(\Des(\tau))$.
And, for $1 \leq i < j \leq n$, we say that $i$ and $j$ are \emph{attacking} (in $\tab$)
if either
\begin{enumerate}[label = (\roman*)]
\item $i$ and $j$ are in the same column in $\tau$, or
\item $i$ and $j$ are in adjacent columns in $\tau$, with $j$ positioned lower-right of $i$.
\end{enumerate}

Let $\bfSsa$ be the $\C$-span of $\SPCTsa$.
The $H_n(0)$-action on $\bfSsa$ is defined by
\begin{align}\label{eq: Hecke action on SPCT}
\hpi_i \cdot \tau = \begin{cases}
\tau & \text{if $i \notin \Des(\tab)$},\\
0 & \text{if $i \in \Des(\tab)$, $i$ and $i+1$ are attacking},\\
s_i \cdot \tab & \text{if $i \in \Des(\tab)$, $i$ and $i+1$ are nonattacking}
\end{cases}
\end{align}
for $1 \le i \le n-1$ and $\tau \in \SPCTsa$.
Here $s_i \cdot \tab$ is obtained from $\tab$ by swapping $i$ and $i+1$. 

\begin{theorem}\cite[Theorem 3.1]{19TW}\label{thm: Hecke action on SPCT}
Given $\alpha \models n$, \eqref{eq: Hecke action on SPCT} induces an $H_n(0)$-action on $\bfSsa$.
\end{theorem}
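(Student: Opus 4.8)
The plan is to verify that the operators $\hpi_1,\dots,\hpi_{n-1}$ defined by \eqref{eq: Hecke action on SPCT} satisfy the three defining relations of $H_n(0)$ in \eqref{eq: Hecke gen}, but before that one must check that \eqref{eq: Hecke action on SPCT} genuinely defines operators on $\bfSsa$, i.e. that whenever $i \in \Des(\tau)$ and $i,i+1$ are nonattacking, the filling $s_i \cdot \tau$ again lies in $\SPCTsa$. The essentially new point compared with the $\SRCT$ case (that is, $\sigma=\id$) treated in \cite{15TW} is that $s_i \cdot \tau$ must retain type $\sigma$, namely that the standardization of its first column is unchanged. Here the key observation is that if both $i$ and $i+1$ occupied the first column they would lie in the same column and hence be attacking, so in the nonattacking case at most one of $i,i+1$ sits in the first column; swapping then merely replaces $i$ by $i+1$ (or vice versa) there, and since no entry of the first column lies strictly between the consecutive integers $i$ and $i+1$, the relative order of the first-column entries, and therefore its standardization, is preserved. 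The remaining conditions of Definition~\ref{def: PCT}, that rows weakly decrease and that the triple condition holds, involve only the filling and not the first-column type, so their stability under $s_i$ is verified exactly as for reverse composition tableaux in \cite{15TW}. This establishes well-definedness.

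For the relation $\hpi_i^2=\hpi_i$ I would argue by cases on the status of $i$ in $\tau$. If $i\notin\Des(\tau)$ then $\hpi_i\cdot\tau=\tau$ and a second application changes nothing; if $i\in\Des(\tau)$ with $i,i+1$ attacking then $\hpi_i\cdot\tau=0$ and the identity is trivial; and if $i\in\Des(\tau)$ with $i,i+1$ nonattacking then $\hpi_i\cdot\tau=s_i\cdot\tau$, in which $i+1$ now lies strictly left of $i$, so $i\notin\Des(s_i\cdot\tau)$ and a second application of $\hpi_i$ fixes $s_i\cdot\tau$. Hence $\hpi_i^2=\hpi_i$. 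The far-commutation relation $\hpi_i\hpi_j=\hpi_j\hpi_i$ for $|i-j|\ge 2$ is equally direct: the operator $\hpi_i$ reads off only the relative positions of the entries $i$ and $i+1$, while $\hpi_j$ reads off only those of $j$ and $j+1$, and for $|i-j|\ge 2$ these two pairs of values are disjoint. Consequently, swapping $i$ with $i+1$ neither moves the boxes $\tau^{-1}(j),\tau^{-1}(j+1)$ nor alters whether $j$ is a descent or whether $j,j+1$ attack, and symmetrically; tracking the three cases of each operator then shows the two composites agree (in particular both vanish whenever either operator sends $\tau$ to $0$).

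The main obstacle is the braid relation $\hpi_i\hpi_{i+1}\hpi_i=\hpi_{i+1}\hpi_i\hpi_{i+1}$, which couples the three consecutive values $i,i+1,i+2$. I would prove it by a finite case analysis organized according to the relative geometric configuration of the boxes $\tau^{-1}(i),\tau^{-1}(i+1),\tau^{-1}(i+2)$, specifically according to which of the pairs $\{i,i+1\}$ and $\{i+1,i+2\}$ form descents and which are attacking, and in each configuration I would compute both sides using \eqref{eq: Hecke action on SPCT}. Because the notions of descent and of attacking pair are defined purely in terms of row and column positions and make no reference to the first-column standardization, every configuration that can arise here already arises for standard reverse composition tableaux; moreover the type-preservation established in the first step guarantees that all intermediate fillings produced along the way remain inside $\SPCTsa$. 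Therefore this case analysis is formally identical to the verification of the braid relation for $\bfS_\alpha=\bfS^{\id}_\alpha$ in \cite{15TW}, and I would carry it out by invoking that analysis with the ambient set $\SRCT(\alpha)$ replaced throughout by $\SPCTsa$. Completing this last relation finishes the proof that \eqref{eq: Hecke action on SPCT} defines an $H_n(0)$-action on $\bfSsa$.
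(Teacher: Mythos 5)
Two pieces of context first. The paper itself contains no proof of this statement: it is imported verbatim from \cite[Theorem 3.1]{19TW}, so the only thing to compare your attempt against is the route taken in that reference, which is exactly the one you reconstruct --- adapt the arguments used for $\SRCT(\alpha)$ in \cite{15TW}, the genuinely new point being preservation of the type $\sigma$. The parts you argue directly are correct: a nonattacking pair $\{i,i+1\}$ cannot have both members in the first column, so the swap changes at most one first-column entry, and since the two values are consecutive the standardization of the first column is unchanged; your case analyses for $\hpi_i^2=\hpi_i$ and for $\hpi_i\hpi_j=\hpi_j\hpi_i$ with $|i-j|\ge 2$ are also sound. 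Your deferral to \cite{15TW} for preservation of the row and triple conditions is harmless, since that check is purely local and can be made self-contained in a few lines: every comparison not involving both $i$ and $i+1$ is unchanged by the swap, and the nonattacking-descent hypothesis rules out any row pair, same-column pair, or triple in which $i$ and $i+1$ are compared with each other.

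The gap is at the braid relation, precisely the step you identify as the main obstacle. You import the case analysis of \cite{15TW} on the strength of the claim that ``every configuration that can arise here already arises for standard reverse composition tableaux,'' and you deduce this from the fact that descents and attacking pairs are defined purely positionally. That deduction is invalid: which configurations of $\tau^{-1}(i),\tau^{-1}(i+1),\tau^{-1}(i+2)$ are realizable is governed by the tableau axioms, and those axioms differ in the two settings. Taken literally the claim is false: in the SPCT of shape $(1,1)$ and type $21$ whose first column reads $2,1$ from top to bottom, the entry $i+1=2$ lies above $i=1$ in the first column, a configuration impossible in any SRCT. What your transfer actually requires is a property of the \emph{proofs} in \cite{15TW}, not of the definitions: that their verification of the braid relation --- including the auxiliary steps that determine the descent and attacking statuses inside the intermediate fillings $s_i\cdot\tau$, $s_{i+1}s_i\cdot\tau$, and so on --- never invokes the increasing-first-column condition, but only the row-decreasing condition, the triple condition, and well-definedness of the swaps. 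That must be checked step by step (or the finite configuration analysis redone from scratch for SPCTs). The paper's own Appendix~\ref{sec: indecomp} shows such a check is not a formality: when the authors transfer K\"onig's indecomposability proof from SRCTs to SPCTs, two ingredients, \cite[Proposition 3.8]{19Konig} and \cite[Lemma 4.7]{19Konig}, genuinely fail in the permuted setting and have to be replaced by new lemmas. Until you carry out the analogous inspection of \cite{15TW}, the braid relation, and with it the theorem, is not established by your argument.
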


Let $\alpha$ be a composition whose largest part is $\alphamax$ and $\tau$ an SPCT of shape $\alpha$ and type $\sigma$. For $1 \le i \le \alphamax$, we define the \emph{$i$th column word} $w^i(\tau)$ of $\tau$ to be the word obtained from $\tab$ by reading the entries in the $i$th column from top to bottom. The \emph{column word of $\tab$} is the word
$w^1(\tau) \  w^2(\tau) \cdots w^{\alphamax}(\tau)$, which is considered as a permutation in $\SG_n$ written in one-line notation. Denote this word by $\col_\tau$.

The \emph{standardized $i$th column word} of $\tau$, denoted by $\rmst(w^i(\tau))$, is the permutation $\sigma \in \SG_{\ell(w^i(\tau))}$ uniquely determined by the condition: for $1 \le j < j' \le \ell(w^i(\tau))$,
\begin{align*}
\sigma(j) > \sigma(j') \quad \text{if and only if} \quad  w^i(\tau)_{j} > w^i(\tau)_{j'}.    
\end{align*}
Here $\ell(w^i(\tau))$ is the length of the word $w^i(\tau)$. The \emph{standardized column word} of $\tau$, denoted by $\rmst(\tau)$, is the word obtained by concatenating $\rmst(w^i(\tau))$'s:
\[
\rmst(\tau) = \rmst(w^1(\tau)) \  \rmst(w^2(\tau)) \cdots \rmst(w^{\alphamax}(\tau)).
\]
For instance, if $\tau = \begin{array}{c}
\begin{ytableau}
4 & 1 \\
6 & 5 & 3 \\
2 
\end{ytableau}
\end{array}$,
then $\rmst(\tau) = 231 12 1$.

Recall that the equivalence relation $\sim_{\alpha}$ on $\SPCTsa$ defined by
\[
\tau_1 \sim_{\alpha} \tau_2 \quad \text{if and only if} \quad \rmst(\tau_1) = \rmst(\tau_2)\quad \text{for $\tau_1,\tau_2 \in \SPCTsa$}
\]
was introduced in~\cite[Subsection 3.1]{19TW}.
Let $\mathcal{E}^\sigma(\alpha)$ be the set of all equivalence classes under $\sim_{\alpha}$.

Next, we introduce the notion of \emph{source tableaux} and \emph{sink tableaux}.

\begin{definition}(\cite{19TW})
\label{def: source and sink}\hfill
\begin{enumerate}
\item An $\SPCT$ $\tau$ is said to be a \emph{source tableau} if, for every $i \notin \Des(\tau)$ where $i \neq n$, we have that $i+1$ lies to the immediate left of $i$.
\item An $\SPCT$ $\tau$ is said to be a \emph{sink tableau} if, for every $i \in \Des(\tau)$, we have that $i$ and $i+1$ are attacking.
\end{enumerate}
\end{definition}

\begin{lemma}{\rm (\cite[Lemma 3.7]{19TW})}\label{lem: unique source and sink}
For each $E \in \mathcal{E}^{\sigma}(\alpha)$, there is a unique source tableau and a unique sink tableau in $E$.
\end{lemma}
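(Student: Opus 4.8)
The plan is to fix an equivalence class $E \in \mathcal{E}^{\sigma}(\alpha)$, which by definition is the set of all $\tau \in \SPCTsa$ sharing one standardized column word $w := \rmst(\tau)$, and to prove separately that $E$ contains exactly one source and exactly one sink. I would organize both statements around the colored graph on $E$ whose arrows $\tau_1 \overset{i}{\to} \tau_2$ record the nontrivial actions $\tau_2 = \pi_i \cdot \tau_1$, i.e.\ the case $i \in \Des(\tau_1)$ with $i,i+1$ nonattacking, where $\pi_i$ acts by the swap $s_i$. The first routine point is that every such swap preserves $w$: interchanging the consecutive values $i,i+1$, which by nonattackingness lie in distinct columns, does not alter the relative order of the entries within any single column, hence leaves $\rmst$ unchanged. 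Thus the graph lives on $E$, and the class is a union of its connected components.

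To control the graph I would introduce the statistic $\mathrm{inv}(\col_\tau)$, the number of inversions of the column reading word $\col_\tau \in \SG_n$. Along an arrow $\tau_1 \overset{i}{\to} \tau_2$ the word $\col_{\tau_2}$ is obtained from $\col_{\tau_1}$ by transposing the symbols $i$ and $i+1$; since $i \in \Des(\tau_1)$ and $i,i+1$ are nonattacking, $i+1$ sits in a strictly later column than $i$, so $i$ precedes $i+1$ in $\col_{\tau_1}$ and the transposition raises $\mathrm{inv}$ by exactly one. Hence the graph on $E$ is acyclic and graded by $\mathrm{inv}$, and, $E$ being finite, it has at least one vertex with no incoming arrow and at least one with no outgoing arrow. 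It then remains to match these extremal vertices with Definition~\ref{def: source and sink}: a vertex has no outgoing arrow precisely when every descent is attacking (a sink), while one checks, using that swapping would otherwise violate the weak decrease along rows, that a vertex has no incoming arrow precisely when every non-descent $i \ne n$ has $i+1$ immediately to its left (a source). This already yields \emph{existence} of a source and a sink in every class.

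For \emph{uniqueness} I would show that a source is completely determined by the datum $w$, so that two sources (resp.\ two sinks) in $E$ must coincide. The anchor is the observation that, because entries strictly decrease along rows, the global maximum $n$ occupies the first column; being the largest first-column entry, its row is forced to be $\sigma^{-1}(\ell(\alpha))$, \emph{independently} of the tableau, so the box of $n$ is fixed across all of $E$. I would then argue by induction on $n$: deleting $n$ (and, when its row is a single box, passing to the composition obtained by dropping that part, equipped with the induced type) should realize the source of $E$ as the unique extension of the source of a smaller class whose standardized column word is the appropriate restriction of $w$, the source condition forcing the placement of $n$ relative to $n-1$ and leaving no freedom. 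The sink statement is then handled by the order-reversing symmetry $k \mapsto n+1-k$, which interchanges descents with the attacking condition and carries sources to sinks.

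The main obstacle is exactly this uniqueness step, and within it the inductive peeling when the row of $n$ has more than one box, where deleting the first-column entry no longer returns a composition diagram; resolving it forces one to combine the source condition with the triple condition to pin down the remaining entries of that row, which is the genuinely delicate part (the existence half and the monotonicity of $\mathrm{inv}$ being comparatively mechanical). A cleaner alternative that avoids the shape issue altogether is to prove local confluence of the swap rewriting system directly—commuting the two swaps when $|i-j|\ge 2$ and invoking the braid relation $\pi_i\pi_{i+1}\pi_i = \pi_{i+1}\pi_i\pi_{i+1}$ when $|i-j|=1$—and then to apply Newman's lemma together with the termination supplied by $\mathrm{inv}$, concluding that each class has a unique normal form (sink) and, by the symmetric argument on reverse swaps, a unique source.
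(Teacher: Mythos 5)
First, a remark on the comparison itself: the paper does not prove this lemma at all --- it is quoted verbatim from \cite[Lemma 3.7]{19TW} --- so your argument has to stand entirely on its own. Your existence half is essentially sound: nonattacking-descent swaps preserve $\rmst$, the inversion statistic on $\col_\tau$ makes the swap graph acyclic and graded, and finiteness then produces vertices with no outgoing (resp.\ no incoming) arrows, which match the sink (resp.\ source) conditions of Definition~\ref{def: source and sink}. Note, though, that the direction ``no incoming arrow $\Rightarrow$ source'' silently uses a nontrivial lemma: if $i \notin \Des(\tau)$, $i \neq n$, and $i+1$ is not immediately left of $i$, one must verify that $s_i \cdot \tau$ still satisfies the \emph{triple condition} (not merely row-decrease) and that $i$ is a nonattacking descent of it; this is a genuine check, not a one-line observation.

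The genuine gap is in uniqueness, and it is the distinction between connected components of the swap graph and equivalence classes of $\sim_\alpha$. The class $E \in \mathcal{E}^{\sigma}(\alpha)$ is defined by the standardized column word, and your first step shows only that $E$ is a \emph{union} of connected components. Newman's lemma (your ``cleaner alternative'') yields a unique normal form \emph{per component}; nothing in that argument rules out that a single class consists of two or more components, each carrying its own source and sink, so the conclusion ``each class has a unique sink'' does not follow --- the conflation occurs exactly where you replace ``component'' by ``class.'' (There is also a secondary issue there: the braid relation in $H_n(0)$ alone does not give local confluence of the \emph{graph}, since arrows record only the nontrivial swap case; one must check that every intermediate step is again a nonattacking descent, which happens to be true but needs an argument.) The only route you offer that would genuinely give uniqueness per class --- reconstructing the source tableau from $\rmst(\tau)$ alone, which is essentially how \cite{15TW, 19TW} proceed --- is precisely the step you concede is unresolved: the inductive peeling of $n$ fails whenever the row of $n$ has more than one box, since $n$ always sits in the \emph{first} column and removing it then never leaves a composition diagram. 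Finally, the proposed symmetry $k \mapsto n+1-k$ does not map $\SPCTsa$ to any set of SPCTs (rows would become increasing), so the sink case cannot be reduced to the source case this way. To close the proof you would need to show either that every $\sim_\alpha$-class is a single component, or that the source (and, separately, the sink) is determined by the standardized column word; as it stands, neither is established.
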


\section{The indecomposability of $\bfSsa$}\label{sec: Indecomposability}

Hereafter we assume that $n$ is a positive integer.
Let $\alpha \models n$ and $\sigma \in \SG_{\ell(\alpha)}$.
The purpose of this section is to classify all $H_n(0)$-modules $\bfS^{\sigma}_{\alpha}$ that are indecomposable.

To begin with, we recall the decomposition of $\bfS^\sigma_\alpha$ due to Tewari and van Willigenburg~\cite[Lemma 3.6]{19TW}.
For each $E \in \calE^\sigma(\alpha)$, let $\bfS_{\alpha,E}^{\sigma}$ be 
the $H_n(0)$-submodule of $\bfS_\alpha^{\sigma}$ whose underlying space is the $\C$-span of $E$.
As $H_n(0)$-modules, we have
\begin{align}\label{eq: decomp by eq rel}
\bfS_\alpha^{\sigma} \cong \bigoplus_{E \in \mathcal{E}^\sigma(\alpha)} \bfS_{\alpha,E}^{\sigma}.
\end{align}
In case where $\sigma = \id$, K\"onig~\cite{19Konig} proved that 
$\bfS^{\id}_{\alpha,E}$ is indecomposable for every $E \in \mathcal{E}^{\id}(\alpha)$.
It is quite interesting to note that this result holds for all $\sigma \in \SG_{\ell(\alpha)}$.
More precisely, by a slight modification of the arguments in~\cite{19Konig},
we can state the following theorem.

\begin{theorem}{\rm (cf.~\cite[Theorem~4.11]{19Konig})}
\label{thm: eq class indecomp}
Let $\alpha \models n$ and $\sigma \in \SG_{\ell(\alpha)}$. 
For every $E \in \mathcal{E}^{\sigma}(\alpha)$, 
the $H_n(0)$-module $\bfS^{\sigma}_{\alpha,E}$ is indecomposable.
\end{theorem}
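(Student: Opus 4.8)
The plan is to prove that the endomorphism algebra $\mathrm{End}_{H_n(0)}(\bfS^{\sigma}_{\alpha,E})$ is local; as $\bfS^{\sigma}_{\alpha,E}$ is finite dimensional, this is equivalent to indecomposability. I would follow the strategy of K\"onig~\cite{19Konig} developed for $\sigma=\id$ and adapt each step to arbitrary $\sigma$. First I would record that $\bfS^{\sigma}_{\alpha,E}$ is cyclic: by Lemma~\ref{lem: unique source and sink} the class $E$ has a unique source tableau $\tau_0$, which by Definition~\ref{def: source and sink} is the unique minimal vertex of the colored graph on $E$; hence every tableau of $E$ is obtained from $\tau_0$ by applying the generators $\hpi_i$, so $\bfS^{\sigma}_{\alpha,E}=H_n(0)\cdot\tau_0$ and every $\phi\in\mathrm{End}_{H_n(0)}(\bfS^{\sigma}_{\alpha,E})$ is determined by $\phi(\tau_0)$.

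Next I would locate the support of $\phi(\tau_0)$. For $i\notin\Des(\tau_0)$, reading off \eqref{eq: Hecke action on SPCT} shows that the fixed space $\ker(\hpi_i-\mathrm{id})$ inside $\bfS^{\sigma}_{\alpha,E}$ is exactly the coordinate subspace spanned by $\{\tau\in E:i\notin\Des(\tau)\}$ (a tableau with no descent at $i$ is fixed, one with an attacking descent at $i$ is annihilated, and for a nonattacking descent $\hpi_i\tau=s_i\tau$ again has no descent at $i$). Since $\hpi_i\tau_0=\tau_0$ forces $\hpi_i\phi(\tau_0)=\phi(\tau_0)$, the vector $\phi(\tau_0)$ lies in this subspace for each such $i$, whence
\[
\phi(\tau_0)=\sum_{\substack{\tau\in E\\ \Des(\tau)\subseteq\Des(\tau_0)}}c_\tau\,\tau.
\]
This step uses only the action \eqref{eq: Hecke action on SPCT} and is insensitive to the type $\sigma$.

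I would then isolate the local structure. Let $\Phi\colon\mathrm{End}_{H_n(0)}(\bfS^{\sigma}_{\alpha,E})\to\C$ send $\phi$ to the coefficient of $\tau_0$ in $\phi(\tau_0)$. Because $\tau_0$ is a source, it has no incoming arrow, so $\tau_0$ never appears in $h\cdot\mu$ for $\mu\in E\setminus\{\tau_0\}$ and $h\in H_n(0)$; combining this with $\phi(h\cdot\tau_0)=h\cdot\phi(\tau_0)$ one checks that $\Phi$ is a surjective algebra homomorphism (indeed $\Phi(\mathrm{id})=1$). It then suffices to show that $\ker\Phi$ is a nil ideal: in that case $\ker\Phi=\rad\bigl(\mathrm{End}_{H_n(0)}(\bfS^{\sigma}_{\alpha,E})\bigr)$, the quotient is the field $\C$, and the endomorphism algebra is local.

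The main obstacle will be the nilpotency of $\ker\Phi$. The natural approach is to fix a rank function $\rho$ on $E$ for which every arrow strictly increases $\rho$ (for instance the longest-path distance from $\tau_0$) and to argue that each $\phi\in\ker\Phi$ strictly raises $\rho$, hence is nilpotent. The difficulty is that the generators $\hpi_i$ only \emph{weakly} raise $\rho$, since they may fix a tableau, so the strict increase has to be forced from the precise combinatorics of which tableaux can occur in $\phi(\tau_0)$; this is the technical core of K\"onig's proof. It is also exactly where the passage to general $\sigma$ demands care: K\"onig's structural lemmas exploit that the first column of a standard reverse composition tableau increases from top to bottom, whereas here the first column is only required to standardize to $\sigma$. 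However, the action \eqref{eq: Hecke action on SPCT}, the notions of descent and attacking pair, and the source/sink conditions of Definition~\ref{def: source and sink} are all defined by the same local rules independent of $\sigma$, and the relative order of the first-column entries is rigidly prescribed by $\sigma$; thus I expect K\"onig's monotonicity arguments to survive after systematically replacing ``increasing first column'' by ``$\sigma$-ordered first column.'' Carrying out these replacements lemma by lemma is the substance of the adaptation, which I would organize as a roadmap in Appendix~\ref{sec: indecomp} rather than reproduce in full.
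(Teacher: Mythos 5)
Your framework coincides with the paper's: the paper likewise proves Theorem~\ref{thm: eq class indecomp} by following K\"onig's analysis of endomorphisms of the cyclic module generated by the unique source tableau $\tau_0$, and the steps you do carry out — cyclic generation by $\tau_0$, and the support constraint $\Des(\tau)\subseteq\Des(\tau_0)$ on the tableaux appearing in $\phi(\tau_0)$ — are correct and identical in substance to the start of K\"onig's argument. Your packaging via the algebra surjection $\Phi$ onto $\C$ with nil kernel is only a mild variant: the K\"onig-style route in fact shows all coefficients other than that of $\tau_0$ vanish, giving the stronger conclusion that every endomorphism is a scalar, but either formulation yields indecomposability.

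The genuine gap is your final paragraph, where the entire technical core is deferred on the expectation that K\"onig's combinatorial lemmas ``survive after systematically replacing increasing first column by $\sigma$-ordered first column.'' This is exactly the point at which the paper does its real work, and the expectation is false as stated: the paper reports that \cite[Proposition 3.8]{19Konig} --- the ingredient from which K\"onig derives the inequality $i_q \le \rmd(\tau)-1$ for every reduced expression $s_{i_p}\cdots s_{i_1}$ of $\mathrm{col}_\tau(\mathrm{col}_{\tau_0})^{-1}$, where $\rmd(\tau)$ is the largest entry occupying different boxes in $\tau$ and $\tau_0$ --- does \emph{not} carry over to SPCTs of general type $\sigma$. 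That inequality is indispensable for the SPCT analogue of K\"onig's Lemma 4.7 (the paper's Lemma~\ref{lem: modifeid 4.7}), which in turn feeds the coefficient-vanishing (equivalently, your nilpotency) step. The paper therefore has to design a substitute, Lemma~\ref{Lem:App1}, proved by a fresh induction on $\ell(\rho_\tau)$ that relies on the graded-lattice structure of $(E,\preccurlyeq)$ and its isomorphism with a weak Bruhat interval (Lemma~\ref{Prop:Thm618}); no mechanical substitution of the first-column condition produces this. Executed as described, your plan would break precisely at that step, so the ``systematic replacement'' claim must be replaced by an argument of this kind.
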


Since the proof of Theorem~\ref{thm: eq class indecomp} is mostly overlapped with~\cite{19Konig},
it will not be contained in this paper.
Instead, in Appendix~\ref{sec: indecomp}, we write down 
arguments necessary for the proof which are revised in accordance with our situation.  

We say that $\bfS^{\sigma}_{\alpha}$ is {\em SPCT-cyclic} 
if it is cyclically generated by a single SPCT.
Combining~\eqref{eq: decomp by eq rel} with Theorem~\ref{thm: eq class indecomp} shows that
$\bfS^{\sigma}_{\alpha}$ is SPCT-cyclic if and only if it is indecomposable.
In case where $\sigma = \id$, the classification of these modules was done by Tewari and van Willigenburg~\cite[Theorem 7.6]{15TW}.
We extend their result to all $\sigma \in \SG_{\ell(\alpha)}$.

Given a composition $\alpha = (\alpha_1, \alpha_2, \cdots, \alpha_\ell)$ and $\sigma \in \SG_{\ell}$,
we say that $\alpha$
\emph{is compatible with} $\sigma$ if $\alpha_i \geq \alpha_j$ whenever $i<j$ and $\sigma(i) > \sigma(j)$.
In \cite{13HMR}
Haglund {\it et al.} introduced a simple characterization for pairs $(\sigma, \alpha)$ with the property that
$\SPCTsa \neq \emptyset$. 
Although stated for
\emph{permuted basement semistandard augmented fillings}, one can see that their characterization can be applied to SPCTs directly.

\begin{proposition} {\rm (\cite[Proposition 14]{13HMR})}
\label{Prop: comparibility}
For a composition $\alpha$ and a permutation $\sigma$ in $\SG_{\ell(\alpha)}$,
  $\SPCTsa$ is nonempty if and only if $\alpha$ is compatible with $\sigma$.
\end{proposition}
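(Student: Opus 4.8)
The plan is to establish both implications directly, with the triple condition of Definition~\ref{def: PCT} serving as the decisive tool in each direction. For necessity I would extract a forced chain of inequalities from a hypothetical tableau; for sufficiency I would write down one explicit witness and check it is an $\SPCT$.

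For necessity, suppose $\SPCTsa \neq \emptyset$ and fix $\tau \in \SPCTsa$. Take $i<j$ with $\sigma(i)>\sigma(j)$; since the first column standardizes to $\sigma$ this means $\tau(i,1)>\tau(j,1)$, and I must deduce $\alpha_i\ge\alpha_j$. Arguing by contradiction, assume $\alpha_i<\alpha_j$, so the cell $(i,\alpha_i+1)$ is absent while $(j,\alpha_i+1)$ is present. I would then show, by downward induction on $k$ from $\alpha_i$ to $1$, that $\tau(i,k)<\tau(j,k+1)$. In the base case $k=\alpha_i$ the absence of $(i,\alpha_i+1)$ means the triple condition cannot fire, so $\tau(i,\alpha_i)>\tau(j,\alpha_i+1)$ is impossible and (entries being distinct) $\tau(i,\alpha_i)<\tau(j,\alpha_i+1)$. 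In the inductive step, if $\tau(i,k)>\tau(j,k+1)$ held, the triple condition would force $\tau(i,k+1)>\tau(j,k+1)\ge \tau(j,k+2)>\tau(i,k+1)$, a contradiction, so again $\tau(i,k)<\tau(j,k+1)$. Specializing to $k=1$ gives $\tau(i,1)<\tau(j,2)\le\tau(j,1)$, contradicting $\tau(i,1)>\tau(j,1)$; hence $\alpha_i\ge\alpha_j$ and $\alpha$ is compatible with $\sigma$.

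For sufficiency, assume $\alpha$ is compatible with $\sigma$ and let $B_i:=\sum_{j:\sigma(j)>\sigma(i)}\alpha_j$. I would define $\tau$ by $\tau(i,c):=n-B_i-c+1$ for $1\le c\le\alpha_i$; concretely this fills the rows with consecutive decreasing blocks of $\{1,\dots,n\}$, processing the rows in decreasing order of their $\sigma$-value. Each row is then strictly decreasing, the blocks partition $\{1,\dots,n\}$, and since $B_i$ strictly decreases as $\sigma(i)$ increases the first column realizes $\sigma$, giving conditions (1) and (2). For the triple condition, the identity $\tau(i,k)=n-B_i-k+1$ gives $\tau(i,k)>\tau(j,k+1)\iff B_j\ge B_i$. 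For $i<j$: if $\sigma(i)<\sigma(j)$ then $B_i>B_j$, so this hypothesis never occurs; if $\sigma(i)>\sigma(j)$ then compatibility gives $\alpha_j\le\alpha_i$, so whenever $\tau(j,k+1)$ exists we have $k+1\le\alpha_j\le\alpha_i$, whence $\tau(i,k+1)$ exists with $\tau(i,k+1)=n-B_i-k>n-B_j-k=\tau(j,k+1)$, which is exactly the required conclusion. Thus $\tau\in\SPCTsa$.

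The only genuinely delicate point, and the one I expect to be the main obstacle, is the triple-condition bookkeeping. In necessity everything is driven by the absence of cell $(i,\alpha_i+1)$, which must be tracked carefully as it propagates the strict inequalities inward; in sufficiency it is precisely compatibility $\alpha_i\ge\alpha_j$ that guarantees the witness cell $(i,k+1)$ is present, and recognizing the block filling $\tau(i,c)=n-B_i-c+1$ as a valid witness is the creative step. Alternatively, one may bypass an explicit construction by recording the weight-preserving correspondence between $\SPCT$s and standard permuted basement augmented fillings and invoking~\cite[Proposition 14]{13HMR} verbatim, as indicated just before the statement.
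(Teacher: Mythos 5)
Your proof is correct, but it takes a genuinely different route from the paper, because the paper gives no proof at all: the proposition is quoted from \cite[Proposition 14]{13HMR}, with the remark that the characterization there, although stated for permuted basement semistandard augmented fillings, applies to SPCTs directly --- this is exactly the fallback you mention in your final sentence. Both of your directions check out. Your necessity argument (the downward ladder of strict inequalities $\tau(i,k)<\tau(j,k+1)$ forced by the absence of the cell $(i,\alpha_i+1)$ and the triple condition) is the same technique the paper deploys for the closely related Lemma~\ref{lem: type dependence}, just run toward a different conclusion. Your sufficiency witness $\tau(i,c)=n-B_i-c+1$ is, although you do not say so, precisely the canonical source tableau $\tauC$ of Definition~\ref{def: canonical tableau}: the row $\sigma^{-1}(k)$ receives the $k$th consecutive block of $[n]$ (counted from the bottom) written in decreasing order, and your identity $B_{\sigma^{-1}(k)}=\sum_{k'>k}\alpha_{\sigma^{-1}(k')}$ makes the two descriptions match. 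This is a nontrivial bonus: the paper introduces $\tauC$ and asserts it is a source tableau in $\SPCTsa$ without ever verifying the triple condition, so your two-case computation ($\sigma(i)<\sigma(j)$ makes the hypothesis of the triple condition vacuous; $\sigma(i)>\sigma(j)$ together with compatibility supplies the witness cell $(i,k+1)$) supplies a check the paper leaves implicit. In short, the paper's citation buys brevity and a connection to the literature on permuted basement fillings, while your argument buys self-containedness --- no translation between SPCTs and augmented fillings is needed --- and simultaneously legitimizes the canonical source tableau that the paper relies on in Sections~\ref{sec: Indecomposability} and~\ref{Sect5}.
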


\begin{definition}\label{def: simple}
  Let $\alpha = (\alpha_1, \alpha_2, \ldots, \alpha_\ell)$ be compatible with $\sigma$ for $\sigma \in \SG_{\ell}$.
\begin{enumerate}
  \item[(a)] For $i < j$,
  if $\sigma(i) < \sigma(j)$ and $\alpha_i \geq \alpha_j \geq 2$,
  then $(i, j)$ is called {\it a permutation-ascending composition-descending (PACD) pair attached to the pair $(\alpha \, ; \sigma)$}.
  \item[(b)] We say that $\alpha$ is \emph{$\sigma$-simple} if every PACD pair $(i,j)$ attached to $(\alpha \, ; \sigma)$ satisfies one of the following conditions:
      \begin{enumerate}
        \item[{\bf C1.}] There exists $i < k < j$ such that
        $\sigma(i) < \sigma(k) < \sigma(j)$ and $\alpha_k = \alpha_j -1$.
        \item[{\bf C2.}] There exists $k > j$ such that
         $\sigma(i) < \sigma(k) < \sigma(j)$ and $\alpha_k = \alpha_j$.
      \end{enumerate}
\end{enumerate}
\end{definition}
If $\sigma = \id$, then Definition~\ref{def: simple}(b) agrees with the definition of simple compositions introduced in \cite{15TW}.
\begin{example}
Let $\sigma = 2134 \in \SG_4$.
  \begin{enumerate}[label = {\rm (\alph*)}]
    \item The composition $\alpha = (1,2,1,3)$ is not compatible with $\sigma$ since $\sigma(1)> \sigma(2)$ but $\alpha_1 < \alpha_2$.
    \item The composition $\alpha = (1,1,2,3)$ is $\sigma$-simple since there are no PACD pairs attached to $(\alpha\, ; \sigma)$.
    \item The composition $\alpha = (3,3,1,2)$ is $\sigma$-simple since $(1,4)$ is a unique PACD pair attached to  $(\alpha \, ; \sigma)$ and $3$ satisfies the condition {\bf C1}. 
    \item The composition $\alpha = (3,1,2,2)$ is not $\sigma$-simple since
    the PACD pair $(1,4)$ attached to $(\alpha\, ; \sigma)$ satisfies neither {\bf C1} nor {\bf C2}.
  \end{enumerate}
\end{example}

For a composition $\alpha$ which is compatible with $\sigma$,
we say that a part $\alpha_j$ \emph{has a removable node {\rm (}with respect to $\sigma${\rm )}} if
either $\sigma(j) = 1$ or the following three conditions hold:
\begin{enumerate}[label = {\bf R\arabic*.}]
  \item $\alpha_j \geq 2$,
  \item there exists no $ k < j$ such that
  $\sigma(k) < \sigma(j)$ and $\alpha_k = \alpha_j -1$, and
  \item there exists no $k > j$ such that
  $\sigma(k) < \sigma(j)$ and $\alpha_k = \alpha_j$.
\end{enumerate}
In this case, we call the box $(j, \alpha_j) \in \tcd(\alpha)$ a \emph{removable node {\rm (}with respect to $\sigma${\rm )}}.

\begin{example}\label{exam: removable}
Let $\alpha = (4,1,2,2)$ and $\sigma = 2134$.
In the diagram below, the dotted boxes are all removable nodes.
\begin{align*}
{\small \begin{ytableau}
  \empty & \empty & \empty & \bullet\\
   \bullet \\
  \empty & \empty \\
  \empty & \empty
\end{ytableau}}
\end{align*}
\end{example}
\vskip 1mm

In the rest of this subsection,
we always assume that $\alpha$ is compatible with $\sigma$ unless otherwise stated.
Let us provide two vital properties of $\sigma$-simple compositions related to removable nodes.

\begin{lemma}\label{lem_removable1}
  Let  $\alpha = (\alpha_1, \alpha_2, \ldots, \alpha_\ell)$ be a $\sigma$-simple composition.
  The part $\alpha_j$ for $1 \leq j \leq \ell$ has a removable node if and only if either $\sigma(j) =1$ or the following are satisfied:
  \begin{enumerate}[label = {\rm (\roman*)}]
    \item for all $i < j$ with $\sigma(i) < \sigma(j)$,
    we have $\alpha_i \leq \alpha_j -2$, and
    \item for all $i > j$ with $\sigma(i) < \sigma(j)$,
    we have $\alpha_i \neq \alpha_j $.
  \end{enumerate}
\end{lemma}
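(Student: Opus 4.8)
The plan is to compare the two characterizations of ``$\alpha_j$ has a removable node'' directly. Since both the definition of a removable node and the statement of the lemma contain the clause ``$\sigma(j)=1$'' as a disjunct, I would first dispose of that case: when $\sigma(j)=1$ both sides hold by fiat, so it suffices to treat $\sigma(j)\ge 2$ and prove that, under the $\sigma$-simplicity hypothesis, the three defining conditions R1--R3 are equivalent to conditions (i)--(ii). One half of this is essentially free: condition (ii) is a verbatim restatement of R3 (with the dummy index renamed), so the equivalence (ii)$\Leftrightarrow$R3 needs no hypothesis at all. The real content lies in matching R1 and R2 with (i).

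For the forward implication I assume R1--R3 and derive (i). Fix $i<j$ with $\sigma(i)<\sigma(j)$. By R2 we have $\alpha_i\ne\alpha_j-1$, so either $\alpha_i\le\alpha_j-2$, which is what I want, or $\alpha_i\ge\alpha_j$. The plan is to rule out the latter. If $\alpha_i\ge\alpha_j$, then since $\alpha_j\ge 2$ by R1 we have $\sigma(i)<\sigma(j)$ and $\alpha_i\ge\alpha_j\ge 2$, that is, $(i,j)$ is a PACD pair attached to $(\alpha\,;\sigma)$. Here $\sigma$-simplicity enters: the pair must satisfy C1 or C2. But a witness $k$ for C1 lies strictly between $i$ and $j$ with $\sigma(k)<\sigma(j)$ and $\alpha_k=\alpha_j-1$, directly contradicting R2, while a witness $k$ for C2 satisfies $k>j$, $\sigma(k)<\sigma(j)$ and $\alpha_k=\alpha_j$, directly contradicting R3. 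Hence $\alpha_i<\alpha_j$, and together with $\alpha_i\ne\alpha_j-1$ this forces $\alpha_i\le\alpha_j-2$, giving (i).

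For the converse I assume (i)--(ii). Conditions R2 and R3 drop out immediately: any $k$ violating R2 would give $k<j$, $\sigma(k)<\sigma(j)$ and $\alpha_k=\alpha_j-1>\alpha_j-2$, contradicting (i), and R3 is nothing but (ii). It remains to establish R1, i.e.\ $\alpha_j\ge 2$, and here I would argue by contradiction using compatibility rather than $\sigma$-simplicity. Suppose $\alpha_j=1$ and let $m$ be the index with $\sigma(m)=1$; since $\sigma(j)\ge 2$ we have $m\ne j$. If $m<j$ then (i) gives $\alpha_m\le\alpha_j-2=-1$, impossible. If $m>j$ then (ii) gives $\alpha_m\ne\alpha_j=1$, so $\alpha_m\ge 2$; but $j<m$ and $\sigma(j)>\sigma(m)$, so compatibility of $\alpha$ with $\sigma$ forces $\alpha_j\ge\alpha_m\ge 2$, again contradicting $\alpha_j=1$. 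Thus $\alpha_j\ge 2$, which is R1.

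I expect the only genuine subtlety to be the forward direction for (i): one must observe that the ``bad'' alternative $\alpha_i\ge\alpha_j$ is exactly a PACD pair, and then check that the two escape routes C1 and C2 afforded by $\sigma$-simplicity collide precisely with the removability conditions R2 and R3. Everything else is a straightforward unwinding of definitions, with compatibility supplying the single nonformal step (namely $\alpha_j\ge 2$) in the converse.
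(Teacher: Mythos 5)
Your proof is correct and takes essentially the same route as the paper's: the forward direction rules out $\alpha_i \ge \alpha_j$ by recognizing that this would create a PACD pair whose {\bf C1}/{\bf C2} witnesses collide with {\bf R2}/{\bf R3}, and the converse obtains {\bf R1} from compatibility together with (i) and (ii). The only cosmetic difference is that you localize the converse's contradiction at the single index $m$ with $\sigma(m)=1$, whereas the paper shows directly that $\alpha_j=1$ forces $\sigma(j)=1$; these are the same argument.
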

\begin{proof}
We first suppose that the part $\alpha_j$ has a removable node.
The assertion is clear when $\sigma(j) =1$,
so we assume that $\sigma(j) \neq 1$.
Note that the assertion (ii) is identical to the condition {\bf R3}.
In addition, the condition {\bf R2} says that there exists no $i<j$ such that $\sigma(i) < \sigma(j)$ and $\alpha_i = \alpha_j -1$.
Hence, for all $i<j$ satisfying $\sigma(i) < \sigma(j)$ and $\alpha_i < \alpha_j$,
we have $\alpha_i \leq \alpha_j -2$.
Now we claim that there is no $i < j$ such that
$\sigma(i) < \sigma(j)$ and $\alpha_i \geq \alpha_j$, that is, there is no PACD pair $(i,j)$ attached to $(\alpha \, ; \sigma)$.
Suppose on the contrary that there exists such an $i$.
Since $\alpha$ is $\sigma$-simple and
$(i,j)$ is a PACD pair,
$(i,j)$ satisfies either {\bf C1} or {\bf C2}. In the former case,
we have $k$ satisfying $i < k < j$, $\sigma(i) < \sigma(k) < \sigma(j)$, and $\alpha_k = \alpha_j-1$, which contradicts our assumption that $\alpha_j$ has a removable node. The latter case does not occur due to the assertion (ii).

Now let us prove the converse.
The conditions {\bf R2} and {\bf R3} follow from (i) and (ii), respectively.
We claim that the index $j$ satisfies either $\sigma(j) = 1$ or {\bf R1}.
To show this,
assume that $\alpha_j = 1$. For $i<j$, $\sigma(i) > \sigma(j)$ by (i) and for $i>j$, $\sigma(i) > \sigma(j)$ by (ii) together with the $\sigma$-compatibility. Thus $\sigma(j) = 1$, as required.
\end{proof}

Given $\sigma \in \SG_{\ell}$,
we denote by $\sigma_{\downarrow}$ the permutation in $\SG_{\ell-1}$
obtained from $\sigma$ by
removing $1$ and then subtracting $1$ from each remaining part.
For a composition $\alpha$ of length $\ell$,
when $\alpha_m$ has a removable node, set
\[
\bsig_{(\alpha;m)} :=
\begin{cases}
\sigma & \text{ if } \alpha_{m} > 1, \\
\sigma_{\downarrow} & \text{ if } \alpha_{m} = 1.
\end{cases}
\]
In case where $\alpha$ and $m$ are manifest from the context, we conventionally drop the subscript from $\bsig_{(\alpha;m)}$.

\begin{lemma}\label{lem_removable2}
Let $\alpha = (\alpha_1, \alpha_2, \cdots, \alpha_\ell)$ be a $\sigma$-simple composition. If the part $\alpha_m$ has a removable node, then the composition $(\alpha_1, \cdots, \alpha_m -1, \cdots, \alpha_\ell)$
is $\bsig$-simple.
\end{lemma}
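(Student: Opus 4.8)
I want to show that if $\alpha$ is $\sigma$-simple and $\alpha_m$ has a removable node, then $\alpha' := (\alpha_1, \ldots, \alpha_m - 1, \ldots, \alpha_\ell)$ is $\bsig$-simple. The natural approach is to take an arbitrary PACD pair $(i,j)$ attached to $(\alpha' \, ; \bsig)$ and produce a certificate (either \textbf{C1} or \textbf{C2}) for it. There are two regimes, governed by the definition of $\bsig$: the case $\alpha_m > 1$, where $\bsig = \sigma$ and $\alpha'$ differs from $\alpha$ only by lowering the $m$th part by one, and the case $\alpha_m = 1$, where $\bsig = \sigma_\downarrow$ arises by deleting the $m$th row entirely (since $\alpha_m - 1 = 0$) and relabeling. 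I would handle these two cases separately, since the combinatorics of the index set differs.

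**First, the case $\alpha_m > 1$.** Here $\bsig = \sigma$ and the indices of $\alpha'$ coincide with those of $\alpha$. Let $(i,j)$ be a PACD pair for $(\alpha' \, ; \sigma)$, so $i<j$, $\sigma(i)<\sigma(j)$, and $\alpha'_i \ge \alpha'_j \ge 2$. The only entry that changed is the $m$th, which decreased by one. I would split on whether $m \in \{i,j\}$. The subtle subcase is $j = m$: by Lemma~\ref{lem_removable1}, since $\alpha_m$ has a removable node and (in this regime) $\sigma(m) \ne 1$, for every $i<m$ with $\sigma(i)<\sigma(m)$ we have $\alpha_i \le \alpha_m - 2$, hence $\alpha'_i = \alpha_i \le \alpha_m - 2 = \alpha'_m - 1 < \alpha'_m$; this contradicts $\alpha'_i \ge \alpha'_j = \alpha'_m$, so $j=m$ cannot occur and there is nothing to check. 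In the remaining subcases $(i,j)$ is also a PACD pair for $(\alpha \, ; \sigma)$ (possibly with $i=m$, where $\alpha'_i = \alpha_i - 1 \ge \alpha'_j$ forces $\alpha_i \ge \alpha'_j$, still giving a genuine PACD pair for $\alpha$), so by $\sigma$-simplicity of $\alpha$ it satisfies \textbf{C1} or \textbf{C2}. I then verify that the certifying index $k$ still works after lowering the $m$th part: the equalities $\alpha_k = \alpha_j - 1$ (for \textbf{C1}) and $\alpha_k = \alpha_j$ (for \textbf{C2}) are preserved unless $k=m$ or the relevant part is $m$, and I would check that Lemma~\ref{lem_removable1}(i),(ii) rule out the problematic collisions.

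**Next, the case $\alpha_m = 1$.** Now $\alpha'$ is obtained from $\alpha$ by deleting the $m$th part, and $\bsig = \sigma_\downarrow$; note $\sigma(m) = 1$ here, so deleting row $m$ and subtracting $1$ from the remaining $\sigma$-values is exactly the passage to $\sigma_\downarrow$. I would set up the index bijection between $\{1,\ldots,\ell\}\setminus\{m\}$ and $\{1,\ldots,\ell-1\}$, under which the part sizes are unchanged and the $\bsig$-values are the $\sigma$-values minus $1$; crucially this bijection is order-preserving on indices and on $\sigma$-values (since we only removed the value $1$), so the relations ``$i<j$'', ``$\sigma(i)<\sigma(j)$'', and all part-comparisons transfer verbatim. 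Consequently a PACD pair for $(\alpha'\,;\sigma_\downarrow)$ pulls back to a PACD pair for $(\alpha\,;\sigma)$ avoiding index $m$, and a \textbf{C1}/\textbf{C2} certificate $k$ for it in $\alpha$ cannot equal $m$ (as $\alpha_m = 1$ forces $\alpha_k \ge 1$ for \textbf{C1} with $\alpha_k = \alpha_j - 1 \ge 1$, and the $\sigma$-betweenness $\sigma(i)<\sigma(k)<\sigma(j)$ excludes $\sigma(k)=1$); hence the certificate survives deletion.

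**The main obstacle.** The genuinely delicate point is not the index bookkeeping but the $j=m$ analysis in the first case and the verification that a certifying index $k$ is never destroyed by the modification. The whole argument hinges on using Lemma~\ref{lem_removable1} to translate ``$\alpha_m$ has a removable node'' into the two clean gap conditions (i) and (ii), and then checking that these conditions are exactly what is needed to prevent a newly created PACD pair at $j=m$ and to protect each certificate. I expect the bulk of the write-up to be a careful case split on the positions of $i$, $j$, $k$ relative to $m$, with Lemma~\ref{lem_removable1} invoked precisely at the boundary cases; everything else is routine transfer of inequalities.
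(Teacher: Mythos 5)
Your proposal follows essentially the same route as the paper's proof: pull each PACD pair of the modified composition back to a PACD pair of $(\alpha\,;\sigma)$, invoke $\sigma$-simplicity to obtain a certificate $k$, and use Lemma~\ref{lem_removable1} to rule out the collisions $j=m$ and $k=m$; the paper merely organizes its two cases by whether $\sigma(m)=1$ rather than whether $\alpha_m=1$, and carries out the collision checks you defer (they are indeed routine, each a two-line application of Lemma~\ref{lem_removable1}(i), exactly as you predict). One correction: in your first regime the parenthetical claim that $\alpha_m>1$ forces $\sigma(m)\neq 1$ is false (e.g.\ $\alpha=(2,2)$, $\sigma=21$, $m=2$), so Lemma~\ref{lem_removable1} gives you the gap condition (i) only vacuously there; however, when $\sigma(m)=1$ no PACD pair attached to $(\alpha'\,;\sigma)$ can have $j=m$, and no certificate can have $k=m$, since either would require $\sigma(i)<\sigma(m)=1$, so your conclusions in that subcase hold for this more elementary reason and the argument goes through.
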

\begin{proof}
For simplicity, set $\hat{\alpha}:= (\alpha_1, \cdots, \alpha_m -1, \cdots, \alpha_\ell)$.
We first notice that
$\hat{\alpha}$ is compatible with $\bsig$
since there is no $k > m$ such that $\sigma(k) < \sigma(m)$ and $\alpha_k = \alpha_m$.

First, let us deal with the case where $\sigma(m) = 1$.
Let $(i,j)$ be a PACD pair attached to $(\hat{\alpha} \, ; \bsig)$.
Notice that $j$ cannot be $m$ and $\alpha_i \geq \hat{\alpha}_i$ for all $i$.
So one can see that $(i, j)$ is also a PACD pair attached to $(\alpha \, ; \sigma)$.
Since $\alpha$ is $\sigma$-simple,
there exists a positive integer $k$ satisfying {\bf C1} or {\bf C2}.
Hence, to show that $\hat{\alpha}$ is $\bsig$-simple,
it is enough to see that $k \neq m$.
But this is clear because $\sigma(i) < \sigma(k) < \sigma(j)$.

Next, let us deal with the case where $\sigma(m) \neq 1$.
If $\alpha_{m} = 1$, then $\sigma(m) = 1$ by the definition of removable nodes.
So we assume that $\alpha_m \ge 2$.
Let $(i,j)$ be any PACD pair attached to $(\hat{\alpha} \, ; \sigma)$,
We will prove that
there exists a positive integer $k$  satisfying one of {\bf C1} and {\bf C2} in Definition \ref{def: simple}(b).
To begin with,
we note that Lemma \ref{lem_removable1} implies that
$\hat{\alpha}_i < \hat{\alpha}_m$ for all $1 \leq i < m$ with $\sigma(i) < \sigma(m)$.
Thus $(i, m)$ cannot be a PACD pair attached to $(\hat{\alpha} \, ; \sigma)$.
We have the following three cases.
\smallskip

  {\it Case 1: $1 \leq i < j < m$.}
  Since $\hat{\alpha_i} = \alpha_i$ and $\hat{\alpha}_j = \alpha_j$,
  $(i,j)$ is also a PACD pair attached to $(\alpha \, ; \sigma)$.
  Since $\alpha$ is $\sigma$-simple,
  there exists a positive integer $k$ satisfying either  {\bf C1} or {\bf C2}.
  When $k$ satisfies {\bf C1}, $\hat{\alpha_k} = \alpha_k$
  and thus the PACD pair $(i,j)$ attached to $(\hat{\alpha} \, ; \sigma)$ also satisfies {\bf C1}.
  On the other hand,
  when $k$ satisfies {\bf C2},
  we have $k>j$ such that $\sigma(i) < \sigma(k) < \sigma(j)$ and $\alpha_k = \alpha_j$.
  To verify that the PACD pair $(i,j)$ attached to $(\hat{\alpha} \, ; \sigma)$ also satisfies {\bf C2},
  we need to establish that $k \neq m$.
  Since $\alpha_m$ has a removable node,
  by Lemma \ref{lem_removable1},
  we have that
  $\alpha_i \leq \alpha_m-2$
  for all $i<m$ with $\sigma(i) < \sigma(m)$.
  However, if $k = m$, then
  $\alpha_i \geq \alpha_j = \alpha_m$
  which contradicts our assumption that $\alpha_m$ has a removable node.
\smallskip

  {\it Case 2: $m \leq i < j \leq \ell$.}
  We can prove the assertion as in {\it Case 1}.
\smallskip

  {\it Case 3: $1 \leq i \leq m < j \leq \ell$.}
  Since $(i,j)$ is also a PACD pair attached to $(\alpha \, ; \sigma)$,
  there exists a positive integer $k$ satisfying either {\bf C1} or {\bf C2}.
  When $k$ satisfies {\bf C1},
  we have $i < k < j$ such that $\sigma(i) < \sigma(k) < \sigma(j)$ and $\alpha_k = \alpha_j -1$.
  If  $\sigma(i)<\sigma(m)$,
  then $k$ cannot be $m$ as $\alpha_i \leq \alpha_m-2$ and $\alpha_i > \alpha_k$.
  If $\sigma(i) > \sigma(m)$,
  then $k$ cannot be $m$ since $\sigma(k) > \sigma(i) > \sigma(m) $.
  In any case,
  we can say that the PACD pair $(i,j)$ attached to $(\hat{\alpha} \, ; \sigma)$
  also satisfies {\bf C1}.
  On the other hand,
  when $k$ satisfies {\bf C2},
  it is obvious that
  the PACD pair $(i,j)$ attached to $(\hat{\alpha} \, ; \sigma)$
  also satisfies {\bf C2}.
  \smallskip

Combining {\it Case 1}, {\it Case 2}, and {\it Case 3} with the equality $\hat{\alpha}_k = \alpha_k$,
we conclude that $\hat{\alpha}$ is a $\sigma$-simple composition.
\end{proof}

For every composition $\alpha = (\alpha_1,\ldots, \alpha_{\ell})$ and $\sigma \in \SG_\ell$,
one can construct a source tableau in $\SPCTsa$ in the following way:
For $1 \leq i \leq \ell$,
fill the $\alpha_{\sigma^{-1}(i)}$th row with
\[
\sum_{j = 0}^{i-1} \alpha_{\sigma^{-1}(j)} + 1 \, ,  \sum_{j = 0}^{i-1} \alpha_{\sigma^{-1}(j)} + 2 \, , \ldots \, , \sum_{j = 0}^{i-1} \alpha_{\sigma^{-1}(j)} + \alpha_{\sigma^{-1}(i)}
\]
in decreasing order from left to right.
Here $\alpha_{\sigma^{-1}(0)}$ is set to be 0.
\begin{definition}\label{def: canonical tableau}
The tableau defined as above is called \emph{the canonical source tableau of shape $\alpha$ and type $\sigma$}, and is denoted by $\tauC$.
\end{definition}
When $\alpha_{\sigma^{-1}(\ell)} \ge 2$, define a new filling $\htauC$ of $\tcd(\alpha)$ in the following steps:
Set $\beta := (\alpha_1, \cdots, \alpha_{\sigma^{-1}(\ell)} -1, \cdots, \alpha_{\ell})$.
\begin{enumerate}[label = {\it Step \arabic*.}]
  \item Fill the composition diagram $\tcd(\beta)$ so that the resulting tableau is the canonical source tableau of shape $\beta$ and type $\sigma$.
  \item Add 1 to every entry on the filling obtained in {\it Step 1}.
  \item Add a box with entry 1 on the rightmost of the ${\sigma^{-1}(\ell)}$th row of the filling obtained in {\it Step 2}. Set $\htauC$ to be the resulting filling.
\end{enumerate}
For example, let $\alpha = (1,3,2,4)$ and $\sigma = 1324$.
Then we have 
\[
 \tauC =
 \begin{array}{l}
 {\small \begin{ytableau}
  1 \\
  6 & 5 & 4 \\
  3 & 2 \\
  10 & 9 & 8 & 7
\end{ytableau}}
\end {array} \quad  \text{and} \quad
 \htauC =
 \begin{array}{l}
 {\small 
 \begin{ytableau}
  2 \\
  7 & 6 & 5 \\
  4 & 3 \\
  10 & 9 & 8 & 1
\end{ytableau}}
\end{array} .
\]
As one can see in this example,
$\htauC$ is not necessarily an SPCT.
However, the following lemma shows that under a suitable condition, 
$\htauC$ is not only an SPCT but also a source tableau.

\begin{lemma}\label{lem : construct source}
With the above notation,
suppose that $\sigma^{-1}(1) < \sigma^{-1}(\ell)$
and $\alpha_{\sigma^{-1}(1)} \geq \alpha_{\sigma^{-1}(\ell)} \geq 2$.
If the part $\alpha_{\sigma^{-1}(\ell)}$ has a removable node,
then $\htauC$ is a source tableau in $\SPCTsa$.
\end{lemma}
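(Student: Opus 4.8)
Throughout write $m := \sigma^{-1}(\ell)$ and $r := \sigma^{-1}(1)$, so that $\sigma(m) = \ell$ is the largest value taken by $\sigma$ and $\sigma(r) = 1$ the smallest, and the hypotheses read $r < m$ and $\alpha_r \ge \alpha_m \ge 2$. Since $\sigma(k) < \ell = \sigma(m)$ for every $k \neq m$, the statement that $\alpha_m$ has a removable node simplifies to: $\alpha_m \ge 2$, there is no $i < m$ with $\alpha_i = \alpha_m - 1$ ({\bf R2}), and there is no $j > m$ with $\alpha_j = \alpha_m$ ({\bf R3}). The first thing I would verify is that $\htauC$ is well defined, that is, that $\beta := (\alpha_1, \dots, \alpha_m - 1, \dots, \alpha_\ell)$ is compatible with $\sigma$, so that the canonical source tableau $\tau_\beta$ of shape $\beta$ and type $\sigma$ used in the construction genuinely lies in $\SPCT^\sigma(\beta)$. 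Lowering $\alpha_m$ by $1$ can only affect the compatibility inequalities for pairs $(m, j)$ with $j > m$; for such a pair compatibility of $\alpha$ gives $\alpha_m \ge \alpha_j$ and {\bf R3} gives $\alpha_j \ne \alpha_m$, whence $\beta_m = \alpha_m - 1 \ge \alpha_j = \beta_j$. Thus $\beta$ is compatible with $\sigma$.

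Next I would record an explicit description of $\htauC$ by comparison with $\tauC$ (which is an SPCT, as $\alpha$ is compatible with $\sigma$). Setting $P_i := \sum_{t=1}^{i} \alpha_{\sigma^{-1}(t)}$, the batches of $\tauC$ and of $\tau_\beta$ indexed by $1, \dots, \ell - 1$ coincide, so after the shift in Step~2 and the insertion in Step~3 one finds that $\htauC$ agrees with $\tauC$ on every row other than $m$ up to adding $1$ to each entry, while row $m$ of $\htauC$ reads $n, n-1, \dots, n - \alpha_m + 2, 1$ (its first $\alpha_m - 1$ entries being those of row $m$ of $\tauC$, and its last entry being $1$). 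In particular $\htauC$ is a bijective filling of $\tcd(\alpha)$ by $1, \dots, n$. From this, conditions (1) and (2) of Definition~\ref{def: PCT} are immediate: the first-column entries of the rows $\sigma^{-1}(1), \dots, \sigma^{-1}(\ell)$ are $P_1 + 1, \dots, P_{\ell - 1} + 1, n$, which are strictly increasing, so row $\sigma^{-1}(i)$ carries the $i$th smallest first-column entry and the first column standardizes to $\sigma$; and each row is strictly decreasing (the rows $\neq m$ are shifts of decreasing rows of $\tauC$, and row $m$ is visibly decreasing since $1$ is its smallest entry).

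For the source condition I would scan consecutive values. A pair $(v, v+1)$ whose two members lie in a common row is always arranged with $v+1$ immediately to the left of $v$, so such $v$ are precisely the non-descents of $\htauC$ and they satisfy the source requirement. Every remaining pair — a batch boundary, the passage from the top of row $\sigma^{-1}(\ell-1)$ down into row $m$, or the pair $(1,2)$ — places $v+1$ weakly right of $v$ and therefore yields a descent, so it imposes nothing. The only delicate instance is $(1,2)$: here $1$ occupies $(m, \alpha_m)$ and $2$ occupies $(r, \alpha_r)$, and since $\alpha_r \ge \alpha_m$ the entry $2$ lies weakly right of $1$, so $1 \in \Des(\htauC)$ and again the source condition is not triggered at $1$. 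This establishes that $\htauC$ is a source tableau once we know it is an SPCT.

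The main work, and the step I expect to be the real obstacle, is the triple condition (3), because relocating the entry $1$ to the box $(m, \alpha_m)$ is exactly what can spoil it. For a pair of rows $i < j$ with $i, j \neq m$ the condition transfers verbatim from $\tauC$, as $\htauC$ restricts to those rows as an order-preserving $(+1)$ shift of $\tauC$. For a pair $(i, m)$ with $i < m$ and a column $k$ with $k + 1 \le \alpha_m - 1$, the entry of row $m$ involved is unchanged and, using that all entries are distinct, the premise is equivalent to the corresponding premise for $\tauC$, so the triple condition of $\tauC$ gives the conclusion; the one new subcase is $k + 1 = \alpha_m$, where $\htauC(m, \alpha_m) = 1$ makes the premise hold as soon as the box $(i, \alpha_m - 1)$ is present, and the required conclusion is that $(i, \alpha_m)$ also be present, i.e.\ that no $i < m$ has $\alpha_i = \alpha_m - 1$ — which is precisely {\bf R2}. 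Symmetrically, for a pair $(m, j)$ with $j > m$ the earlier columns reduce to $\tauC$, while at the last column the premise could only be invoked if the box $(j, \alpha_m)$ existed; but compatibility forces $\alpha_j \le \alpha_m$ and {\bf R3} forces $\alpha_j \ne \alpha_m$, so $\alpha_j < \alpha_m$ and that box is absent, leaving the condition vacuous. Collecting these cases proves $\htauC \in \SPCTsa$, and combined with the source check above this yields the lemma. I anticipate the bookkeeping at the last column of row $m$ to be the crux, since it is the single place where the moved entry $1$ interacts with the triple condition, and where {\bf R2} and {\bf R3} are indispensable.
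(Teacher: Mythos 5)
Your proposal is correct and follows essentially the same route as the paper's proof: away from the box containing $1$, $\htauC$ is an order-preserving copy of $\tauC$ (so all tableau conditions transfer), the triple condition at the new box $(\sigma^{-1}(\ell),\alpha_{\sigma^{-1}(\ell)})$ is exactly where {\bf R2} and {\bf R3} together with compatibility are invoked, and the source property follows from the scan of consecutive entries with $1 \in \Des(\htauC)$ coming from $\alpha_{\sigma^{-1}(1)} \geq \alpha_{\sigma^{-1}(\ell)}$. Your write-up is somewhat more thorough than the paper's in that you also verify that $\beta$ is compatible with $\sigma$ (so the construction of $\htauC$ is well defined) and check conditions (1) and (2) of Definition~\ref{def: PCT} explicitly, but the mathematical content is the same.
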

\begin{proof}
We begin with the observation that ignoring the rightmost box in the  ${\sigma^{-1}(\ell)}$th row, $\tauC$ and $\htauC$ have the same standardization.
To verify that $\htauC$ is in $\SPCTsa$,
it suffices to show that
the rightmost box on the ${\sigma^{-1}(\ell)}$th row, which is filled with $1$, obeys the triple condition.
Equivalently, it suffices to show that the following two events do not occur in $\htauC$:
\[
\begin{array}{l}
\begin{ytableau}
a \\
\none[\vdots]\\
&1
\end{ytableau}
\end{array}
\quad \text{or} \quad
\begin{array}{l}
\begin{ytableau}
a & 1\\
\none & \none[\vdots]\\
\none&b
\end{ytableau}
\end{array}
\quad \text{with $a>b$}
\]
This can be derived from the assumption that $\alpha_{\sigma^{-1}(\ell(\alpha))}$ has a removable node,
which guarantees that there exists no part $\alpha_i$ above  $\alpha_{\sigma^{-1}(\ell(\alpha))}$
satisfying  $\alpha_i = \alpha_{\sigma^{-1}(\ell(\alpha))} -1$ and
there exists no part $\alpha_j$ below  $\alpha_{\sigma^{-1}(\ell(\alpha))}$
satisfying  $\alpha_i = \alpha_{\sigma^{-1}(\ell(\alpha))}$.

Since the rightmost box in the $\sigma^{-1}(1)$th row is filled with 2
and $\alpha_{\sigma^{-1}(1)} \geq \alpha_{\sigma^{-1}(\ell(\alpha))}$,
we have that $1 \in \Des(\htauC)$.
For $1< i < |\alpha|$,
suppose that $i+1$ does not lie to the immediate left of $i$.
Then $i$ and $i+1$ cannot be placed on the same row.
Moreover,
$i$ lies on the leftmost box of a row
and $i+1$ lies on the rightmost box of a row because of the construction of $\htauC$.
This implies that $i \in \Des(\htauC)$.
As a consequence, $\htauC$ is a source tableau.
\end{proof}

For $\tau \in \SPCTsa$,
suppose that $\tau_{m, \alpha_m} = 1$.
If $\alpha_i = \alpha_m$ for some $i > m$ with $\sigma(i) < \sigma(m)$, then
$\tab_{i, \alpha_m -k+1} > \tab_{m, \alpha_m-k}$ for $1 \leq k \leq \alpha_m -1$, thus $\tab_{i,1} > \tab_{m,1}$.
But this cannot occur by the condition $\sigma(i) < \sigma(m)$, so $\alpha_i \neq \alpha_m$ for $i > m$ with $\sigma(i) < \sigma(m)$.
For $i < m$ with $\sigma(i) < \sigma(m)$, one has that $\alpha_i \neq \alpha_m -1$ by the triple condition.
Consequently, we can conclude that the box $(m, \alpha_m)$ is a removable node.

On the other hand,
when the part $\alpha_m$ has a removable node, one can construct an SPCT of shape $\alpha$ and type $\sigma$ 
such that $(m, \alpha_m)$ is filled with $1$ as follows:
Let $\hat{\alpha} = (\alpha_1, \cdots, \alpha_m -1, \cdots, \alpha_\ell)$.
Pick up any SPCT $\hat{\tau}$ of shape $\hat{\alpha}$ and type $\bsig$. 
Add $1$ to every entry in $\hat{\tau}$ and then place a new box filled with $1$ at the position $(m, \alpha_m)$.
We denote the resulting filling by $\hat\tau'$.
It is straightforward to verify that $\hat\tau' \in \SPCTsa$.

As seen in Example~\ref{exam: removable},
$\alpha$ may have more than one removable nodes.
So, for $\tau, \tau' \in \SPCTsa$, the position of $1$ in $\tau$ may be different from that in $\tau'$.
However, if $\alpha$ is $\sigma$-simple, then the position of $1$ in any source tableau is determined in a unique way.

\begin{proposition}\label{prop: row of 1 in source tab}
Let $\alpha$ be a composition and $\sigma \in \SG_{\ell(\alpha)}$.
Let $\tau_0$ be a source tableau in $\SPCTsa$.
If $\alpha$ is a $\sigma$-simple composition,
then the box filled with $1$ lies on the rightmost of the $\sigma^{-1}(1)$th row in $\tau_0$.
\end{proposition}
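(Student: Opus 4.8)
The plan is to prove the statement by induction on $n=|\alpha|$, the reduction being deletion of the box holding $1$. Let $(m,\alpha_m)$ be the box of $\tau_0$ containing $1$. Since rows weakly decrease and $1$ is minimal, this is the rightmost box of row $m$, and by the discussion preceding the proposition $(m,\alpha_m)$ is a removable node. If $\alpha_m=1$, then $1$ lies in the first column, hence is the minimal first-column entry, so $\sigma(m)=1$ and $m=\sigma^{-1}(1)$ immediately. Thus I may assume $\alpha_m\ge2$, and I argue by contradiction, supposing $\sigma(m)\ne1$, i.e.\ $m\ne r$ where $r:=\sigma^{-1}(1)$.

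Set $\hat\alpha:=(\alpha_1,\dots,\alpha_m-1,\dots,\alpha_\ell)$ and let $\tau_0'$ be the filling obtained from $\tau_0$ by erasing $(m,\alpha_m)$ and decreasing every remaining entry by $1$. The first task is to check that $\tau_0'$ is a source tableau in $\SPCT^{\bsig}(\hat\alpha)$, where $\bsig=\sigma$ since $\alpha_m\ge2$. Erasing the rightmost box of a row and relabeling clearly preserves the row-decreasing and first-column conditions; the only instance of the triple condition that could be destroyed involves the erased box as the would-be entry $\tau(m,\alpha_m)$, but $\tau_0(m,\alpha_m)=1$ forces $\tau_0(m,\alpha_m-1)<\tau_0(j,\alpha_m)$ for every $j>m$, so no violation arises. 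Since $\Des(\tau_0')=\{d-1:d\in\Des(\tau_0),\,d\ge2\}$ and the positions of the entries $\ge2$ are unchanged, the source condition of Definition~\ref{def: source and sink} transfers verbatim. By Lemma~\ref{lem_removable2}, $\hat\alpha$ is $\bsig$-simple, so the induction hypothesis applies to $\tau_0'$ and tells us that $1$ in $\tau_0'$ — equivalently $2$ in $\tau_0$ — occupies the rightmost box of row $\bsig^{-1}(1)=r$.

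It remains to derive a contradiction from $m\ne r$. Now $1$ and $2$ sit at the rightmost boxes of rows $m$ and $r$ respectively, and these rows are distinct. If $1\notin\Des(\tau_0)$, the source condition would place $2$ immediately left of $1$ and hence in row $m$, forcing $r=m$; therefore $1\in\Des(\tau_0)$, whence $\alpha_r\ge\alpha_m$ by comparing the columns of $1$ and $2$. Feeding this into Lemma~\ref{lem_removable1}: if $r<m$, then part (i) (with $\sigma(r)=1<\sigma(m)$) gives $\alpha_r\le\alpha_m-2$, a contradiction; if $r>m$, then compatibility of $\alpha$ with $\sigma$ (Proposition~\ref{Prop: comparibility}, using $\sigma(m)>\sigma(r)$) gives $\alpha_r\le\alpha_m$, so $\alpha_r=\alpha_m$, contradicting part (ii). Hence $\sigma(m)=1$. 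The hard part is the verification in the middle paragraph that deleting the box of $1$ returns a source tableau — in particular that no instance of the triple condition is broken — since everything afterward is a short deduction from Lemmas~\ref{lem_removable1} and~\ref{lem_removable2} together with compatibility.
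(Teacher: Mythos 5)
Your proof is correct, and it takes a genuinely different route from the paper's. The paper argues directly by contradiction on a single fixed tableau: setting $i=\sigma^{-1}(1)$ and letting $j$ be the row containing $1$, it splits into three cases ($\alpha_i=1$; $\alpha_i>1$ and $i<j$; $\alpha_i>1$ and $i>j$) and, in the latter two, chases chains of entries $t,t-1,\dots,r+1$ across rows using the triple condition together with Lemma~\ref{lem: type dependence} to exhibit an entry $r$ with $r\notin\Des(\tau_0)$ that violates the source condition. You instead induct on $|\alpha|$: delete the (removable) box containing $1$, verify that deletion preserves both the SPCT axioms and the source condition, invoke Lemma~\ref{lem_removable2} to retain $\sigma$-simplicity, and then combine the inductive location of $2$ with a short descent argument, Lemma~\ref{lem_removable1}, and $\sigma$-compatibility to force $\sigma(m)=1$. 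Your route is shorter, avoids Lemma~\ref{lem: type dependence} entirely, and the step you rightly flag as the crux --- that erasing the box of $1$ cannot break the triple condition (because any endangered instance would force $1>\tau_0(j,\alpha_m)$ in $\tau_0$) nor the source condition --- is precisely the step the paper later asserts as ``obvious'' inside its proof of Theorem~\ref{thm: SPCT-cyclic}(a); in effect you move that deletion--induction one level earlier, so that the proposition and the theorem share a single mechanism. What the paper's approach buys is that it needs no induction and no verification that deletion is well behaved, at the cost of a heavier entry-by-entry case analysis. There is no circularity in your argument, since Lemmas~\ref{lem_removable1} and~\ref{lem_removable2} and Proposition~\ref{Prop: comparibility} are established independently of Proposition~\ref{prop: row of 1 in source tab}.
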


To prove this proposition,
we need the following lemma.
\begin{lemma}\label{lem: type dependence}
Let $\alpha = (\alpha_1, \alpha_2, \ldots, \alpha_\ell)$ be a composition and $\sigma \in \SG_{\ell}$.
Let $\tau \in \SPCTsa$ and set $m := {\rm min}(\alpha_i, \alpha_j)$.
If $\sigma(i) < \sigma(j)$ and $\tau_{i,m} > \tau_{j,m}$, then $i<j$.
\end{lemma}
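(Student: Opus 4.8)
The plan is to argue by contradiction, translating the type condition on $\sigma$ into a statement about the first column and then locating the column where the relative order of rows $i$ and $j$ reverses. First I would record the basic dictionary between $\sigma$ and the first column: since the standardization of the word obtained by reading the first column of $\tau$ from top to bottom is $\sigma$, the inequality $\sigma(i) < \sigma(j)$ is equivalent to $\tau_{i,1} < \tau_{j,1}$. In particular, the two hypotheses read $\tau_{i,1} < \tau_{j,1}$ and $\tau_{i,m} > \tau_{j,m}$, so the two rows must change their relative order somewhere between columns $1$ and $m$.

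Suppose toward a contradiction that $i > j$, so that row $j$ sits above row $i$. For every $1 \le k \le m$ both entries $\tau_{i,k}$ and $\tau_{j,k}$ exist, because $m = \min(\alpha_i, \alpha_j)$, and they are distinct since $\tau$ is standard. Let $k$ be the largest index in $\{1, \ldots, m\}$ with $\tau_{i,k} < \tau_{j,k}$; this index set is nonempty (it contains $1$), and because $\tau_{i,m} > \tau_{j,m}$ we have $k < m$. Maximality then forces $\tau_{i,k+1} > \tau_{j,k+1}$, so columns $k$ and $k+1$ realize exactly the sign flip $\tau_{i,k} < \tau_{j,k}$ and $\tau_{i,k+1} > \tau_{j,k+1}$.

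Now I would invoke the triple condition for the rows $j < i$, i.e.\ with row $j$ as the upper row and row $i$ as the lower row. Its hypothesis $\tau_{j,k} > \tau_{i,k+1}$ holds because $\tau_{j,k} > \tau_{i,k}$ by the choice of $k$, while $\tau_{i,k} > \tau_{i,k+1}$ since the entries along each row of an $\SPCT$ strictly decrease (they decrease weakly and are distinct). The triple condition then yields that $\tau_{j,k+1}$ exists and $\tau_{j,k+1} > \tau_{i,k+1}$, which directly contradicts $\tau_{i,k+1} > \tau_{j,k+1}$. Hence $i > j$ is impossible, and since $i \ne j$, we conclude $i < j$.

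I do not expect a serious obstacle here; the only delicate point is the bookkeeping around the triple condition, namely applying it in the correct orientation (upper row $j$, lower row $i$) and verifying its hypothesis from the weak decrease along rows together with distinctness of entries in a standard filling. For a clean write-up I would isolate the equivalence $\sigma(i) < \sigma(j) \iff \tau_{i,1} < \tau_{j,1}$ at the outset, since it is the conceptual content of the ``type'' assumption and is reused implicitly throughout.
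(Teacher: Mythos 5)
Your proof is correct and takes essentially the same approach as the paper: both argue by contradiction assuming $i > j$ and derive the contradiction from the triple condition applied to the pair of rows $j$ (upper) and $i$ (lower), together with the dictionary between $\sigma$ and the first column of $\tau$. The only difference is organizational — the paper iterates the contrapositive of the triple condition to propagate $\tau_{j,c} < \tau_{i,c}$ from column $m$ all the way back to column $1$, reaching the contradiction with $\sigma(i) < \sigma(j)$ there, whereas you locate the single crossover column $k$ and apply the triple condition once, packaging the same mechanism as an extremal argument rather than a column-by-column propagation.
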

\begin{proof}
Suppose on the contrary that $i > j$.
Let $t_k = \tau_{i,m-k+1}$ and $s_k = \tau_{j,m-k+1}$ for each $1 \leq k \leq m$:
\[
\begin{tikzpicture}[scale=.55]
\draw[-] (0,0)--(5,0)--(5,1)--(0,1)--(0,0);
\draw[-] (3,0)--(3,1);
\draw[-] (4,0)--(4,1);
\draw[-] (1,0)--(1,1);
\draw[-] (5,0)--(5,1);
\draw[-] (0,-1.5+0.3)--(5,-1.5+0.3)--(5,-2.5+0.3)--(0,-2.5+0.3)--(0,-1.5+0.3);
\draw[-] (3,-1.5+0.3)--(3,-2.5+0.3);
\draw[-] (1,-1.5+0.3)--(1,-2.5+0.3);
\draw[-] (4,-1.5+0.3)--(4,-2.5+0.3);
\node (0) at (-2, -1.9+0.3) {row $i$ :};
\node (1) at (-2, .55) {row $j$ :};
\node (2) at (2.1,-.6+0.15) {$\vdots$};
\node (3) at (0.5, .45) {$s_m$};
\node (3) at (2, .45) {$\cdots$};
\node (4) at (3.5, .45) {$s_2$};
\node (5) at (4.5, .45) {$s_1$};
\node (9) at (4.5,-2+0.3) {$t_1$};
\node (10) at (3.5,-2+0.3) {$t_2$};
\node (9) at (2.1,-2+0.3) {$\cdots$};
\node (9) at (0.5,-2+0.3) {$t_m$};
\end{tikzpicture}
\]
By definition, $s_1 < t_1$. 
Applying the triple condition yields that $s_2 < t_1$, thus $s_2 < t_2$.
Applying the triple condition again,
we see that $s_3 < t_2$, thus $s_3 < t_3$.
Repeating this argument,
we derive that $s_m < t_m$.
It, however, contradicts the assumption that $\sigma(j) > \sigma(i)$.
\end{proof}

\noindent
\begin{proof}[Proof of Proposition \ref{prop: row of 1 in source tab}]
Let $(j, \alpha_j) = \tau_0^{-1}(1)$
and $i = \sigma^{-1}(1)$.
Suppose on the contrary that $i \neq j$.
To begin with, we observe that $\alpha_{j} \geq 2$ since $\alpha_j$ has a removable node.

First, we suppose that $\alpha_i = 1$.
Let $t = (\tau_0)_{i,1}$.
Obviously, $t \neq 1$ and
$t$ does not lie to the immediate left of $t-1$.
Since $\sigma(i)=1$,
the box filled with $t-1$ cannot be placed on the first column,
and thus $t$ lies to  strictly left of $t-1$.
This contradicts the assumption that $\tau_0$ is a source tableau.

Second, we suppose that $\alpha_i > 1$ and $i <j$.
It follows from Lemma \ref{lem_removable1} that $\alpha_i \leq \alpha_j -2$.
Let $t = (\tau_0)_{i,\alpha_i}$ and $s = (\tau_0)_{j,\alpha_i+1}$.
By the triple condition, we have $t < s$.
Let $r$ be the leftmost one in the $j$th row among the entries less than $t$.
\[
\begin{tikzpicture}[scale=.55]
\draw[-] (0,0.3)--(8,0.3)--(8,1.3)--(0,1.3)--(0,0.3);
\draw[-] (2,0.3)--(2,1.3);
\draw[-] (3,0.3)--(3,1.3);
\draw[-] (4,0.3)--(4,1.3);
\draw[-] (5,0.3)--(5,1.3);
\draw[-] (6,0.3)--(6,1.3);
\draw[-] (7,0.3)--(7,1.3);
\draw[-] (8,0.3)--(8,1.3);
\draw[-] (0,2.5)--(3,2.5)--(3,3.5)--(0,3.5)--(0,2.5);
\draw[-] (2,2.5)--(2,3.5);
\node (0) at (-2, 3.1) {row $i$ :};
\node (1) at (-2, .5+0.4) {row $j$ :};
\node (2) at (2.45,2.1) {$\vdots$};
\node (4) at (3.5, .5+0.3) {$s$};
\node (5) at (4.55, .5+0.3) {$\cdots$};
\node (6) at (5.5, .5+0.3) {$r$};
\node (7) at (6.55, .5+0.3) {$\cdots$};
\node (8) at (7.5, .5+0.3) {$1$};
\node (4) at (2.5,3) {$t$};
\end{tikzpicture}
\]
Notice that the immediate left of $r$ cannot be $r+1$ by the choice of $r$.
Let $k$ be the index of row containing $t-1$. 
Since $\tau_0$ is a source tableau and $t$ does not lie to the immediate left of $t-1$,
it follows that $t-1$ lies to  weakly left of $t$.
Thus $i < k$ due to Lemma~\ref{lem: type dependence}.
Moreover, $\alpha_k \leq \alpha_i$. Otherwise, the triple condition breaks since $(i, \alpha_i+1) \not \in \tcd(\alpha)$.
As the next step, let $k'$ be the index of row containing $t-2$. 
In case where $t-1$ lies to the immediate left of $t-2$, we have that $i < k'$ and $\alpha_{k'} \leq \alpha_i$ since $k' = k$.
Otherwise, $t-2$ lies weakly left of $t-1$ since $\tau_0$ is a source tableau.
Using Lemma~\ref{lem: type dependence} and the triple condition again, we have that $i < k'$ and  $\alpha_{k'} \leq \alpha_i$.
Now let $k''$ be the index of row containing $r+1$.
Keeping on the above step, we derive that $i < k''$ and $\alpha_{k''} \leq \alpha_i$.
Obviously, $k'' \neq j$ and $r \not\in \Des(\tau_0)$, which contradicts the assumption that $\tau_0$ is a source tableau. 

Finally we suppose that $\alpha_i > 1$ and $i >j$.
Since $\alpha$ is compatible with $\sigma$ and $\alpha_j$ has a removable node,
we have that $\alpha_i < \alpha_j$.
Let $t = (\tau_0)_{i,\alpha_i}$ and $s = (\tau_0)_{j,\alpha_i}$.
If $s < t$,
then Lemma \ref{lem: type dependence} implies that $i <j$, which is absurd.
Hence $s > t$.
Let $r$ be the leftmost one in the $j$th row among the entries less than $t$.
Let $k''$ be the index of row containing $r+1$.
In the same manner as in case where $\alpha_i > 1$ and $i<j$, we derive that
$k'' \neq j$ and $\alpha_{k''} \leq \alpha_i$.
This yields $r \not\in \Des(\tau_0)$ in contradiction to the assumption that $\tau_0$ is a source tableau.
\end{proof}

We now state the classification of SPCT-cyclic $H_n(0)$-modules.

\begin{theorem}\label{thm: SPCT-cyclic}
  Let $\alpha$ be a composition of $n$.
\begin{enumerate}[label = {\rm (\alph*)}]
\item For every $\sigma \in \SG_{\ell(\alpha)}$,
  $\SPCTsa$ contains a unique source tableau
  if and only if $\alpha$ is $\sigma$-simple.
\item For every $\sigma \in \SG_{\ell(\alpha)}$, $\bfS^{\sigma}_{\alpha}$ is SPCT-cyclic 
  if and only if $\alpha$ is $\sigma$-simple.
\end{enumerate}
\end{theorem}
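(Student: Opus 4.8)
The plan is to establish the combinatorial statement (a) first and then read off the representation-theoretic statement (b) from it. I would begin by disposing of the incompatible case: if $\alpha$ is not compatible with $\sigma$, then $\SPCTsa=\emptyset$ by Proposition~\ref{Prop: comparibility}, so $\bfS^{\sigma}_{\alpha}=0$ has no source tableau and is not SPCT-cyclic, while $\alpha$ is not $\sigma$-simple because $\sigma$-simplicity is defined only for compatible pairs; both equivalences then hold vacuously, and I may assume throughout that $\alpha$ is compatible with $\sigma$. Granting (a), part (b) follows formally: by~\eqref{eq: decomp by eq rel} together with Theorem~\ref{thm: eq class indecomp}, the module $\bfS^{\sigma}_{\alpha}$ is a direct sum of nonzero indecomposables $\bfS^{\sigma}_{\alpha,E}$, so it is indecomposable---equivalently SPCT-cyclic---precisely when $|\calE^{\sigma}(\alpha)|=1$; since Lemma~\ref{lem: unique source and sink} identifies the number of components with the number of source tableaux, this happens exactly when $\SPCTsa$ has a unique source tableau, which by (a) is the $\sigma$-simplicity of $\alpha$.

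For the forward direction of (a) I would induct on $n$, the case $\ell(\alpha)=1$ being trivial. Assume $\alpha$ is $\sigma$-simple. The canonical source tableau $\tauC$ of Definition~\ref{def: canonical tableau} witnesses that $\SPCTsa$ has at least one source, so the content is uniqueness. Put $m:=\sigma^{-1}(1)$; since $\sigma(m)=1$, the box $(m,\alpha_m)$ is a removable node, and Proposition~\ref{prop: row of 1 in source tab} forces every source tableau to carry its entry $1$ there. Deleting that box and subtracting $1$ from all remaining entries sends a source tableau of $(\alpha,\sigma)$ to a filling $\hat\tau$ of shape $\hat\alpha:=(\alpha_1,\dots,\alpha_m-1,\dots,\alpha_\ell)$ and type $\bsig$; comparing descents shows that the source condition at indices $\ge 2$ for the original tableau is exactly the source condition for $\hat\tau$, so $\hat\tau$ is again a source and the assignment is injective. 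By Lemma~\ref{lem_removable2} the pair $(\hat\alpha,\bsig)$ is $\bsig$-simple, so by the inductive hypothesis it has a single source; hence $\SPCTsa$ has at most one source, and therefore exactly one.

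For the converse I assume $\alpha$ is compatible with $\sigma$ but not $\sigma$-simple and produce a second source tableau. Since $\tauC$ places its $1$ at the rightmost box of row $m=\sigma^{-1}(1)$, it suffices to exhibit a single source tableau whose $1$ sits in a different row, as any such tableau is automatically distinct from $\tauC$. The construction of such a tableau is supplied by $\htauC$ and Lemma~\ref{lem : construct source}: when $\sigma^{-1}(1)<\sigma^{-1}(\ell)$, $\alpha_{\sigma^{-1}(1)}\ge\alpha_{\sigma^{-1}(\ell)}\ge2$, and $\alpha_{\sigma^{-1}(\ell)}$ has a removable node, the filling $\htauC$ is a source tableau whose $1$ lies in row $\sigma^{-1}(\ell)\ne m$, giving two distinct sources. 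I would first record the clean case, in which the PACD pair $(\sigma^{-1}(1),\sigma^{-1}(\ell))$ is itself a witness to non-$\sigma$-simplicity: then the hypotheses of Lemma~\ref{lem : construct source} are automatic, because compatibility forces every part weakly above row $\sigma^{-1}(1)$ to be at least $\alpha_{\sigma^{-1}(\ell)}$, so that the failure of {\bf C1} and {\bf C2} for this pair upgrades to exactly the removable-node conditions {\bf R2} and {\bf R3} for $\alpha_{\sigma^{-1}(\ell)}$.

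The main obstacle is the remaining case of the converse, in which a witnessing PACD pair $(i,j)$ violating {\bf C1} and {\bf C2} is \emph{not} anchored at the extremal positions $(\sigma^{-1}(1),\sigma^{-1}(\ell))$ required by Lemma~\ref{lem : construct source}. Here I would reduce to the anchored case, either by peeling off removable nodes---while arguing that non-$\sigma$-simplicity is inherited by the smaller pair---or by proving a strengthening of Lemma~\ref{lem : construct source} that places $1$ at the removable node in row $j$ for an arbitrary bad pair. Either way, the crux is to verify that this alternative placement of $1$ still obeys the triple condition and leaves a source tableau: the failure of {\bf C1} rules out an intermediate row of length $\alpha_j-1$ that would break the triple condition at the new box, while the failure of {\bf C2} rules out a later row of length $\alpha_j$ doing the same, so that the non-forcedness of the position of $1$ is precisely the combinatorial shadow of $\alpha$ failing to be $\sigma$-simple.
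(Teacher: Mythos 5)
Your reduction of (b) to (a) and your proof of the forward direction of (a) coincide with the paper's argument: induction on $n$, with Proposition~\ref{prop: row of 1 in source tab} pinning the entry $1$ to the rightmost box of row $\sigma^{-1}(1)$ and Lemma~\ref{lem_removable2} feeding the inductive hypothesis. The genuine gap is in the converse direction of (a). You establish it only in the ``anchored'' case, where some bad PACD pair is exactly $(\sigma^{-1}(1),\sigma^{-1}(\ell))$, and for the general case you offer two reduction strategies without carrying either out. Neither strategy can be made to work.

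Your second strategy (strengthen Lemma~\ref{lem : construct source} so as to place $1$ at $(j,\alpha_j)$ for an arbitrary bad pair $(i,j)$) is impossible. Take $\alpha=(2,2,1)$ and $\sigma=231$: the pair $(1,2)$ is PACD with no \textbf{C1}/\textbf{C2} witness, so $\alpha$ is not $\sigma$-simple, and the box $(2,2)$ is removable; yet $\SPCTsa$ has exactly two source tableaux, whose rows read top to bottom are $(3,2),(5,4),(1)$ and $(4,3),(5,2),(1)$, and \emph{both} have $1$ at $(3,1)$, while the two SPCTs with $1$ at $(2,2)$ violate the source condition. So your guiding heuristic---that non-simplicity is ``precisely'' freedom in the position of $1$---is false; the second source can agree with $\tauC$ in the position of $1$ and differ elsewhere. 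Your first strategy (peel a removable node and argue non-simplicity is inherited) also fails: $\alpha=(2,2,2)$ with $\sigma=132$ is not $\sigma$-simple (the pair $(1,3)$ is bad, and it is not anchored since the anchored pair $(1,2)$ has the \textbf{C2} witness $k=3$), but removing the removable node in row $\sigma^{-1}(1)=1$ gives $(1,2,2)$, which is $132$-simple because it has no PACD pairs at all. The paper's construction avoids both obstacles: given a bad pair $(i,j)$, it restricts the canonical tableau to the block of rows $\mathfrak{I}=\{k \mid \sigma(i)\le\sigma(k)\le\sigma(j)\}$, inside which the pair \emph{becomes} anchored for the standardized permutation $\sigma_\beta$ and the failure of \textbf{C1}/\textbf{C2} really does become the removable-node conditions; Lemma~\ref{lem : construct source} then yields a second source tableau for the sub-composition $\beta$, and splicing it back into $\tauC$ is legitimate because every entry of $\tauC$ outside the block is either smaller or larger than all entries inside the block. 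You would need this (or some equally global) construction; a purely local modification at a removable node of $\alpha$ does not suffice.
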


\begin{proof}
(a) To prove the ``only if'' part, assume that $\alpha$ is not $\sigma$-simple.
We will construct a source tableau $\tau_0$ in $\SPCTsa$ other than the canonical source tableau $\tauC$ of shape $\alpha$ and type $\sigma$.
From the assumption we have a PACD pair $(i,j)$ attached to $(\alpha \, ; \sigma)$
which satisfies
none of the conditions {\bf C1} and {\bf C2} in Definition \ref{def: simple}(b).
Equivalently, there is no $i < k < j$ such that $\sigma(i) < \sigma(k) < \sigma(j)$ and $\alpha_k = \alpha_j -1$,
and no $k > j$ such that $\sigma(i) < \sigma(k) < \sigma(j)$ and $\alpha_k = \alpha_j$.
Let us fix these $i$ and $j$ in the below.

In order to construct $\tau_0$, we list necessary notations in advance.
For better understanding see Example \ref{exam: proof of prop}.
Let
\begin{align*}
\mathfrak{I} := \{ 1 \leq k \leq \ell(\alpha) \mid \sigma(i) \leq \sigma(k) \leq \sigma(j) \} 
= \{k_1 < k_2 < \cdots < k_m\} \, .
\end{align*}
Let $\tau_{\mathfrak{I}}$ (resp., $\tau_{\mathfrak{I}^c}$) be the subfilling of the canonical source tableau $\tauC$ in $\SPCTsa$ consisting of the $k$th rows for all $k \in \mathfrak{I}$ (resp., $k \not\in \mathfrak{I}$).
Set $\beta := (\alpha_{k_1},\alpha_{k_2}, \ldots , \alpha_{k_m})$
and let $\sigma_\beta$ be the permutation given by the standardization of  $\sigma(k_1) \sigma(k_2) \cdots  \sigma(k_m)$.
Let $\kappaC$ be the canonical source tableau in $\rm{SPCT}^{\sigma_\beta}(\beta)$.
Notice that one can view $\tau_{\mathfrak{I}}$ as $\kappaC$ by substracting
$\sum_{ s < \sigma(i)} \alpha_{\sigma^{-1}(s)}$ from each entry in $\tau_{\mathfrak{I}}$.
Since  $\alpha_{\sigma^{-1}(j)}$ has a removable node in $\beta$ with respect to $\sigma_\beta$,
we can construct another source tableau $\hkappaC$ in $\rm{SPCT}^{\sigma_\beta}(\beta)$ due to Lemma \ref{lem : construct source}.
Finally, let $\widehat{\tau}_{\mathfrak{I}}$ be the filling obtained from $\hkappaC$ by adding $\sum_{ s < \sigma(i)} \alpha_{\sigma^{-1}(s)}$ to each entry in $\hkappaC$.

With the above notations, we define $\tau_0$ to be the filling obtained from $\tauC$ by substituting the $k_p$th row with the $p$th row of $\widehat{\tau}_{\mathfrak{I}}$ for $1 \le p \le m$.
Notice that each entry in $\tau_{\mathfrak{I}^c}$ is either strictly less than or strictly greater than all the entries in $\widehat{\tau}_{\mathfrak{I}}$, so $\tau_0$ is an SPCT.
In addition, both $\tauC$ and $\widehat{\tau}_{\mathfrak{I}}$ satisfy the condition for source tableaux.
Putting these together shows that $\tau_0$ is also a source tableau in $\SPCTsa$.

Next, let us prove the ``if'' part by induction on $|\alpha|$.
Our assertion is clear when $\alpha = (1)$.
Assume that our assertion holds for all compositions of $d-1 \ge 1$.
Let $\alpha$ be a composition of $d$ and
$\sigma$  any permutation in $\SG_{\ell(\alpha)}$ such that $\alpha$ is $\sigma$-simple.
By Proposition \ref{prop: row of 1 in source tab},
we have that if $\alpha$ is $\sigma$-simple and $\tau_0$ is a source tableau,
then $\tau_0^{-1}(1)$ lies on the rightmost of $\sigma^{-1}(1)$th row.
Let $\widehat{\tau}_0$ be the SPCT obtained by removing the box filled with $1$ from $\tau_0$ and then by
subtracting $1$ from each remaining entry.
It is obvious that
$\widehat{\tau}_0$ is a source tableau in $\rm{SPCT}_{\widehat{\alpha}}^{\boldsymbol{\sigma}}$ where
$\widehat{\alpha} := (\alpha_1, \cdots, \alpha_{\sigma^{-1}(1)}-1, \cdots, \alpha_{\ell(\alpha)})$ and
 \[\boldsymbol{\sigma} =
   \begin{cases}
    \sigma & \text{ if } \alpha_{\sigma^{-1}(1)} > 1 \\
    \sigma_{\downarrow} & \text{ if } \alpha_{\sigma^{-1}(1)} = 1
   \end{cases} \, .
   \]
Since $\widehat{\alpha}$ is a $\boldsymbol{\sigma}$-simple composition due to Lemma \ref{lem_removable2},
our induction hypothesis
implies that
$\widehat{\tau}_0$ is the canonical source tableau of shape $\widehat{\alpha}$ and type $\boldsymbol{\sigma}$.
Consequently, $\tau_0$ is the canonical source tableau of shape ${\alpha}$ and type $\sigma$
since the box $(\sigma^{-1}(1), \alpha_{\sigma^{-1}(1)})$ is filled with $1$, as required.

(b) 
The assertion can be obtained by putting Lemma~\ref{lem: unique source and sink}, \eqref{eq: decomp by eq rel}, and (a) together.
\end{proof}

\begin{example}\label{exam: proof of prop}
Let $\alpha = (2,4,4,2,3)$ and $\sigma = 24315$.
Then $\alpha$ is not $\sigma$-simple because the PACD pair $(3,5)$  satisfies neither {\bf C1} nor {\bf C2}.
Since $\sigma(3) = 3$, $\sigma(2) = 4$ and $\sigma(5) = 5$,
we have that $\mathfrak{I} = \{ 2, 3, 5 \}$.
In addition, $\beta = (\alpha_2, \alpha_3, \alpha_5) = (4,4,3)$ and $\sigma_\beta = 213$, the standardization of $\sigma(2) \sigma(3) \sigma(5)$.
Then we have
\[
 \tauC \, = \, {\small 
 \begin{array}{l}
 \begin{ytableau}
  4 & 3\\
  *(black!10) 12 & *(black!10)11 & *(black!10)10 & *(black!10)9 \\
  *(black!10)8 & *(black!10)7 & *(black!10)6 & *(black!10)5\\
  2 & 1 \\
  *(black!10)15 & *(black!10)14 & *(black!10)13
\end{ytableau}
\end{array}} \quad \text{and} \quad \
 \tau_0 \, = \, {\small 
 \begin{array}{l}\begin{ytableau}
  4 & 3\\
  *(black!10) 13 &*(black!10) 12 & *(black!10)11 & *(black!10)10   \\
  *(black!10)9 &*(black!10)8 & *(black!10)7 & *(black!10)6 \\
  2 & 1 \\
  *(black!10)15 & *(black!10)14 & *(black!10)5
\end{ytableau}
\end{array}}\ .
\]
In the left tableau, $\tau_\mathfrak{I}$ is the shaded part and $\tau_{\mathfrak{I}^c}$ is the unshaded part.
In the right tableau, $\widehat{\tau}_\mathfrak{I}$ is the shaded part.
\end{example}

When $\sigma$ is the longest element, Theorem~\ref{thm: SPCT-cyclic} can be stated in the following simple form. 

\begin{corollary}\label{Coro318}
Let $w_0$ be the longest element in $\SG_{\ell}$.
For a composition $\alpha$ of length $\ell$,
$\SPCT_{\alpha}^{w_0}$ is nonempty if and only if $\alpha$ is a partition.
Furthermore, $\bfS^{w_0}_{\alpha}$ is an SPCT-cyclic $H_n(0)$-module if and only if $\alpha$ is a partition of $n$.
\end{corollary}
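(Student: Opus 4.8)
The plan is to specialize the general machinery already in place to $\sigma = w_0$, the entire content being carried by a single observation about the one-line notation of the longest element. Since $w_0$ is the longest element of $\SG_\ell$, it is the order-reversing permutation, so $w_0(i) = \ell + 1 - i$; in particular, for any $i < j$ one always has $w_0(i) > w_0(j)$.

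First I would settle the nonemptiness claim. By the definition of compatibility stated just before Proposition~\ref{Prop: comparibility}, $\alpha$ is compatible with $w_0$ precisely when $\alpha_i \ge \alpha_j$ holds whenever $i < j$ and $w_0(i) > w_0(j)$. By the observation above, the second hypothesis is automatic for \emph{every} pair $i < j$, so compatibility with $w_0$ amounts exactly to the chain $\alpha_1 \ge \alpha_2 \ge \cdots \ge \alpha_\ell$, i.e. to the statement that $\alpha$ is a partition. Proposition~\ref{Prop: comparibility} then yields that $\SPCT_\alpha^{w_0}$ is nonempty if and only if $\alpha$ is a partition.

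For the SPCT-cyclic claim I would invoke Theorem~\ref{thm: SPCT-cyclic}(b), which reduces the question to deciding when $\alpha$ is $w_0$-simple. For the ``only if'' direction, if $\bfS^{w_0}_{\alpha}$ is SPCT-cyclic then it is generated by a single SPCT, whence $\SPCT_\alpha^{w_0} \neq \emptyset$ and, by the first part, $\alpha$ is a partition. For the converse, suppose $\alpha$ is a partition; then $\alpha$ is compatible with $w_0$, so $w_0$-simplicity is defined. A PACD pair $(i,j)$ attached to $(\alpha\,; w_0)$ would, by Definition~\ref{def: simple}(a), require $i < j$ together with $w_0(i) < w_0(j)$, but the order-reversing property forces $w_0(i) > w_0(j)$ whenever $i < j$. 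Hence there are no PACD pairs at all, the defining condition of $w_0$-simplicity is satisfied vacuously, and $\alpha$ is $w_0$-simple. Theorem~\ref{thm: SPCT-cyclic}(b) then gives that $\bfS^{w_0}_{\alpha}$ is SPCT-cyclic.

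There is no genuine obstacle: once the identity $w_0(i) = \ell + 1 - i$ is recorded, both halves collapse to the remark that the clauses ``$i<j$ and $w_0(i) > w_0(j)$'' (compatibility) and ``$i<j$ and $w_0(i) < w_0(j)$'' (PACD pair) become, respectively, universally true and never satisfied. The only point meriting a line of care is to note that when $\alpha$ is not a partition the module $\bfS^{w_0}_{\alpha}$ is the zero module, which cannot be SPCT-cyclic since SPCT-cyclicity requires a generating tableau while $\SPCT_\alpha^{w_0}$ is then empty; this is exactly what the ``only if'' argument above records.
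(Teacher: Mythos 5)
Your proposal is correct and follows essentially the same route as the paper: Proposition~\ref{Prop: comparibility} settles nonemptiness (compatibility with $w_0$ being exactly the partition condition), and Theorem~\ref{thm: SPCT-cyclic} combined with the vacuous absence of PACD pairs attached to $(\alpha\,;w_0)$ settles SPCT-cyclicity. Your explicit treatment of the ``only if'' direction via emptiness of $\SPCT^{w_0}_\alpha$ is a careful touch the paper leaves implicit, but it does not change the substance of the argument.
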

\begin{proof}
The first assertion follows from Proposition \ref{Prop: comparibility}.
Notice that if $\alpha$ is a partition, then
$\alpha$ is $w_0$-simple since there are no PACD pairs attached to $(\alpha \, ; w_0)$.
Hence the second assertion follows from Theorem \ref{thm: SPCT-cyclic}.
\end{proof}

\section{Characteristic relations and the quasi-Schur expansion of $\ch([\bfSsa])$}\label{sec: Chac ral and quasi Schur exp}

From now on,  
we conventionally drop the superscript $\sigma$ from 
$\SPCTsa$, $\bfSsa$, $\bfS^{\sigma}_{\alpha,E}$, and $\mathcal{E}^{\sigma}(\alpha)$
when $\sigma = \id$.
It was shown in~\cite{15TW} that
\begin{align*}
\ch([\bfS_{\alpha}]) = \sum_{\tau \in \SPCT(\alpha)} F_{\comp(\tau)} = \calS_\alpha,
\end{align*}
where $\calS_\alpha$ is the \emph{quasisymmetric Schur function} introduced in~\cite{11HLMW}. However, in case where $\sigma \neq \id$, $\ch([\bfS_{\alpha}^\sigma])$ has not been well studied yet. 
In this section, we provide a recursive relation for $\ch([\bfS_{\alpha}^\sigma])$'s. Using this, we expand $\ch([\bfS_{\alpha}^\sigma])$ in terms of quasisymmetric Schur functions.

We begin with recalling that $H_n(0)$ acts on the right on $\C[\Z^n]$, the $\C$-span of $\Z^n$, by
\begin{align}\label{eq: bubble sorting operator}
\alpha \bubact \pi_i = \begin{cases}
\alpha \cdot s_i & \text{if $\alpha_i < \alpha_{i+1}$,}\\
\alpha & \text{otherwise}
\end{cases}
\end{align}
for $1\le i \le n-1$ and $\alpha = (\alpha_1,\ldots,\alpha_n) \in \Z^n$. Here $\alpha \cdot s_i$ is obtained from $\alpha$ by swapping $\alpha_i$ and $\alpha_{i+1}$ (see \cite{16Huang}).
Obviously, this action restricts to the $\C$-span of compositions of length $n$.

Let $\alpha$ be a composition and $w_0$ the longest element in $\SG_{\ell(\alpha)}$. For every $\sigma \in \SG_{\ell(\alpha)}$ and $\tab \in \PCT^{\sigma}(\alpha)$, define $\brho_\sigma(\tab)$ to be the filling obtained by
considering the entries of the $i$th column of $\tab$ in decreasing order and putting them in the $i$th column  of $\tal$ from top to bottom for all $1 \le i \le \alpha_{\max}$. 

\begin{theorem}\cite[Theorem 2.4]{19TW}
Let $\lambda$ be a partition and $\sigma \in \SG_{\ell(\lambda)}$. The map
\[
\brho_\sigma : \coprod_{\tal = \lambda} \PCT^\sigma(\alpha) \ra \PCT^{w_0}(\lambda), \quad \tab \mapsto \brho_\sigma(\tab)
\]
is a bijection.
\end{theorem}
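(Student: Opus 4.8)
The plan is to produce an explicit two-sided inverse $\bphi_\sigma$ and to reduce the whole statement to one reconstruction principle: \emph{a $\PCT$ is determined by its type together with the multiset of entries in each of its columns, and whether such a $\PCT$ exists does not depend on the chosen type.} Granting this, the theorem is immediate, since $\brho_\sigma$ leaves every column multiset unchanged and only replaces the type $\sigma$ by $w_0$; the inverse $\bphi_\sigma$ restores the type $\sigma$ on the appropriate composition shape, and the two maps are mutually inverse because each realizes the \emph{unique} $\PCT$ of its type with the prescribed column contents.

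First I would dispose of the formal features of $\brho_\sigma$. Because the $i$th column of $\brho_\sigma(\tab)$ is merely a reordering of the $i$th column of $\tab$, the two fillings have identical column lengths; since $\tal = \lambda$, these lengths are the parts of the conjugate partition $\lambda'$, so $\brho_\sigma(\tab)$ has shape $\lambda$. The entries of the first column of $\tab$ are distinct by Definition~\ref{def: PCT}(1), so sorting them decreasingly makes the first column of $\brho_\sigma(\tab)$ strictly decreasing, which standardizes to $w_0$. Hence condition~(1) holds, and it remains to verify conditions~(2) and~(3) for the sorted filling. Condition~(2) I would obtain from a dominance estimate: for each $k$, sending a cell of column $k+1$ to the cell of column $k$ in the same row is, by the row-weak-decrease of $\tab$, a value-increasing injection of the entries of column $k+1$ into those of column $k$, and an elementary counting argument then shows that the $i$th largest entry of column $k$ is at least the $i$th largest entry of column $k+1$; after decreasing sorting these are exactly $\brho_\sigma(\tab)(i,k)$ and $\brho_\sigma(\tab)(i,k+1)$, so the rows weakly decrease.

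Condition~(3), the triple condition, is the delicate point, and here I would invoke the reconstruction principle, built column by column. The first column is forced by the type: arrange the distinct first-column entries so that they standardize to the prescribed permutation. Inductively, once columns $1,\dots,k-1$ are placed, the support of column $k$ is a subset of the rows already occupied in column $k-1$ (a composition diagram is left-justified, so the column supports are nested), and I claim that both this support and the placement of the multiset $M_k$ inside it are forced by the demand that the filling still obey~(2) and~(3); this pins down the $\PCT$ and simultaneously recovers the shape via $\alpha_i = \max\{k : (i,k) \text{ is occupied}\}$. Once the principle is in hand, $\brho_\sigma(\tab)$ is identified as the type-$w_0$ reconstruction of $\tab$'s column contents (settling~(3)), $\bphi_\sigma(U)$ is defined as the type-$\sigma$ reconstruction of $U$'s column contents, and $\bphi_\sigma\circ\brho_\sigma$ and $\brho_\sigma\circ\bphi_\sigma$ are identities.

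The main obstacle is precisely the reconstruction principle, and specifically the rigidity of the triple condition against the nested column-support structure of a composition diagram. I expect the real work to lie in proving that, at each column, the triple condition leaves no freedom in how the next column's entries are distributed among the admissible rows, and in transferring existence from the type $w_0$ — where the decreasing-sort of $\tab$ makes the reconstruction transparent — to an arbitrary type $\sigma$.
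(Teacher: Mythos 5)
Your opening reductions are correct as far as they go: the sorted filling has the right column lengths and hence fits $\tcd(\lambda)$, its first column is strictly decreasing and so standardizes to $w_0$, and your injection/dominance argument (each cell of column $k+1$ maps to the weakly larger entry immediately to its left, hence the $i$th largest entry of column $k$ dominates the $i$th largest entry of column $k+1$) is a clean, valid proof that $\brho_\sigma(\tab)$ satisfies condition (2) of Definition~\ref{def: PCT}. The gap is everything after that. The triple condition for $\brho_\sigma(\tab)$, the well-definedness of $\bphi_\sigma$, injectivity, and surjectivity are all delegated to your ``reconstruction principle,'' which you assert but do not prove --- and which you yourself flag as ``the main obstacle'' where ``the real work'' lies. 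That principle is not an auxiliary lemma one may grant: it is equivalent to the theorem. Uniqueness of a type-$\sigma$ $\PCT$ with prescribed column multisets is equivalent to injectivity of $\brho_\sigma$ (two type-$\sigma$ $\PCT$s with the same column contents have the same sort), and type-independence of existence, applied from $w_0$ to $\sigma$, is exactly surjectivity. So the proposal reduces the statement to a statement of the same strength and stops there.

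For context: the present paper does not prove this theorem either; it is imported from \cite{19TW}, where the proof proceeds by exhibiting the inverse $\bphi_\sigma$ explicitly as the greedy insertion procedure reproduced here as Algorithm~\ref{alg: phi map} (entries of each column are placed in decreasing order, each into the smallest-index row whose left neighbor is filled and whose row stays weakly decreasing). Your intuition that each column's placement is ``forced'' is the right starting point, but a proof requires three separate verifications, none of which appears in your proposal: (i) the greedy placement never gets stuck, and its output satisfies the triple condition and has type $\sigma$, so $\bphi_\sigma$ is well defined; (ii) any $\PCT$ of type $\sigma$ with the given column contents must coincide with the greedy output --- this is where the rigidity of the triple condition is actually exploited, and it is what yields injectivity; and (iii) $\brho_\sigma(\tab)$ itself satisfies the triple condition, and every $U \in \PCT^{w_0}(\lambda)$ has decreasing columns, so that $\brho_\sigma \circ \bphi_\sigma$ and $\bphi_\sigma \circ \brho_\sigma$ are identities. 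Point (iii) is not automatic even granting (i) and (ii): your dominance argument gives only the row condition for the sorted filling, not the triple condition, and nothing in the proposal shows that the unique type-$w_0$ tableau with the given column contents is the column-wise decreasing sort. Until (i)--(iii) are supplied, what you have is a correct treatment of the easy conditions together with an accurate diagnosis of where the difficulty sits, but not a proof.
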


Let $\bphi_{\sigma}$ denote the inverse of $\brho_\sigma$, which can be obtained by the following algorithm (see~\cite{19TW}).
\begin{algorithm}\label{alg: phi map} Let $\tab \in \PCT^{w_0}(\tal)$.
\begin{enumerate}[label = {\it Step \arabic*.}]
\item Consider the entries in the first column of $\tab$ and write them in rows $1,2,\ldots, \ell(\tal)$ so that the standardization of the word obtained by reading from top to bottom is $\sigma$.
\item Consider the entries in the second column in decreasing order and place each of them in the row with the smallest index so that the box to the immediate left of the number being placed is filled and the row entries weakly decrease when read from left to right.
\item Repeat {\it Step} 2 with the set of entries in the $k$th column for $k=3,4, ..., \lambda_1$.
\item Let $\bphi_{\sigma}(\tau)$ be the resulting tableau and terminate the algorithm.
\end{enumerate}
\end{algorithm}

Let $\lambda$ be a partition and $\sigma_1, \sigma_2 \in \SG_{\ell(\lambda)}$. Consider the map defined by
\begin{align*}
\mpsi_{\sigma_1}^{\sigma_2} : \coprod_{\tal = \lambda} \SPCT^{\sigma_1}(\alpha) \ra \coprod_{\tal = \lambda} \SPCT^{\sigma_2}(\alpha),
\quad \tab \mapsto \left(\bphi_{\sigma_2} \circ \brho_{\sigma_1}\right)(\tab).
\end{align*}

\begin{example}
Let
\[
\tab = 
\begin{array}{l}
\begin{ytableau}
5 & 2 \\
7 & 6 & 4 \\
3 & 1
\end{ytableau}
\end{array}
\in \SPCT^{231}(2,3,2).
\]
Then we have
\[
\brho_{231}(\tab) = 
\begin{array}{l}
\begin{ytableau}
7 & 6 & 4 \\
5 & 2 \\
3 & 1
\end{ytableau}
\end{array}
\in \SPCT^{321}(3,2,2) \quad \text{and} \quad 
\mpsi_{231}^{123}(\tab) = 
\begin{array}{l}
\begin{ytableau}
3 & 2 \\
5 & 1 \\
7 & 6 & 4
\end{ytableau}
\end{array}
\in \SPCT(2,2,3).
\]
\end{example}

\begin{remark}\label{rem: descent preserve}
Let $\lambda$ be a partition and $\sigma \in \SG_{\ell(\lambda)}$. Since $\bphi_{\sigma}$ and $\brho_{\sigma}$ preserve the entries in each column, they preserve $\Des(\tau)$ for every $\tau \in \coprod_{\tal = \lambda} \SPCT^{\sigma}(\alpha)$. Thus, for any $\sigma_1, \sigma_2 \in \SG_{\ell(\lambda)}$, $\mpsi_{\sigma_1}^{\sigma_2}$ also preserves $\Des(\tau)$.
\end{remark}

The following lemma is indispensable in proving Proposition~\ref{prop: psi map image}
which leads us to the main result of this section.

\begin{lemma}\label{lem: T_{i',k} all equal}
Given a composition $\alpha$ and $\sigma \in \SG_{\ell(\alpha)}$, 
let $\tab \in \SPCTsa$. For $1 \le i \le \ell(\alpha)$, set $\htau := \mpsi^{\sigma s_i}_{\sigma}(\tab)$ and  $p:= \min\{\alpha_i,\alpha_{i+1}\}$.

\begin{enumerate}[label = {\rm(\alph*)}]
\item If there is $1 \le k \le p$ such that $\tab_{i,k} > \tab_{i+1,k}$, then $\tab_{i,m} > \tab_{i+1,m}$ for all $k \le m \le p$.
\item If the $k$th column of $\htau$ is equal to that of $\tab$ for some $k \ge 1$, then the $(k+1)$st column of $\htau$ is also equal to that of $\tab$.
\item For $1 \le k \le p - 1$, suppose that $\tab_{i,k} > \tab_{i+1,k}$ and the $k$th column of $\htau$ is obtained from the $k$th column of $\tab$ by swapping $\tab_{i,k}$ and  $\tab_{i+1,k}$.
\begin{enumerate}[label = {\rm (\roman*)}]
\item If $\tab_{i+1,k} >  \tab_{i,k+1}$, then the $(k+1)$st column of $\htau$ is equal to that of $\tab$.
\item If $\tab_{i+1,k} < \tab_{i,k+1}$, then the $(k+1)$st column of $\htau$ is obtained from the $(k+1)$st column of $\tab$ by swapping $\tab_{i,k+1}$ and $\tab_{i+1,k+1}$.
\end{enumerate}
\item For $1 \le k \le p - 1$, suppose that $\tab_{i,k} < \tab_{i+1,k}$ and the $k$th column of $\htau$ is obtained from the $k$th column of $\tab$ by swapping $\tab_{i,k}$ and  $\tab_{i+1,k}$.
\begin{enumerate}[label = {\rm (\roman*)}]
\item If $\tab_{i,k} > \tab_{i+1,k+1}$, then the $(k+1)$st column of $\htau$ is equal to that of $\tab$.
\item If $\tab_{i,k} < \tab_{i+1,k+1}$, then the $(k+1)$st column of $\htau$ is obtained from the $(k+1)$st column of $\tab$ by swapping $\tab_{i,k+1}$ and  $\tab_{i+1,k+1}$.
\end{enumerate}
\end{enumerate}
\end{lemma}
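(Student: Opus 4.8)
The plan is to exploit that both $\tab$ and $\htau$ are produced from a common column-sorted tableau. Set $U := \brho_{\sigma}(\tab)$, the filling of shape $\tal$ obtained by sorting each column of $\tab$ into decreasing order. Since $\bphi_{\sigma}$ is the inverse of $\brho_{\sigma}$, we have $\tab = \bphi_{\sigma}(U)$, while by definition $\htau = \bphi_{\sigma s_i}(U)$. Thus $\tab$ and $\htau$ arise by running the reconstruction Algorithm~\ref{alg: phi map} on the \emph{same} input $U$, the only difference being that Step~1 places the first column so that its standardization is $\sigma$ in one case and $\sigma s_i$ in the other. Reading off ranks, the first columns of $\tab$ and $\htau$ coincide in every row $r \neq i,i+1$ and are interchanged in rows $i$ and $i+1$. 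This is the base case of the invariant I will maintain throughout: after each column is placed, $\tab$ and $\htau$ agree in every row $r \neq i,i+1$, and the unordered pair of entries occupying rows $i,i+1$ of any fixed column is the same in both. In particular this forces $\{\beta_i,\beta_{i+1}\} = \{\alpha_i,\alpha_{i+1}\}$ for the shape $\beta$ of $\htau$, so all boxes named in the statement exist in both tableaux; I treat such box-existence bookkeeping as routine.

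Part (a) is independent of these maps and follows from the defining conditions of a $\PCT$, by induction on $m$. Suppose $\tab_{i,m} > \tab_{i+1,m}$ with $k \le m < p$. Weak decrease along row $i+1$ gives $\tab_{i,m} > \tab_{i+1,m} \ge \tab_{i+1,m+1}$, hence $\tab_{i,m} > \tab_{i+1,m+1}$; the triple condition, applied with the rows $i<i+1$ and columns $m,m+1$, then guarantees that $\tab_{i,m+1}$ exists and $\tab_{i,m+1} > \tab_{i+1,m+1}$, completing the induction.

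For the remaining parts I run the invariant column by column, so that (b), (c), (d) are exactly the three cases of the inductive step, distinguished by the state of column $k$. The key reduction is that, when column $k+1$ is placed in $\htau$, the insertion rule of Step~2 fills every row $r \neq i,i+1$ precisely as it does in $\tab$ (the immediate-left entries agree and the entry set of the column is identical), so the two entries left over for rows $i,i+1$ of column $k+1$ form the same pair $\{a,b\}$ as in $\tab$, where $a := \tab_{i,k+1}$ and $b := \tab_{i+1,k+1}$. It then suffices to decide, in $\htau$, the order in which $a$ and $b$ occupy rows $i,i+1$. Processing the larger of the two first, it is offered to row $i$ before row $i+1$ (no lower-indexed row accepts it, since none did in $\tab$), and it is retained in row $i$ exactly when it is at most the column-$k$ entry $\htau_{i,k}$; this single comparison decides the outcome.

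In case (b) the column-$k$ entries in rows $i,i+1$ of $\htau$ coincide with those of $\tab$, so the comparison has the same outcome and column $k+1$ is reproduced verbatim. In cases (c) and (d) the column-$k$ entries in rows $i,i+1$ are interchanged relative to $\tab$, so $\htau_{i,k} = \tab_{i+1,k}$. For (c), part~(a) gives $a>b$, and testing $a$ against $\htau_{i,k} = \tab_{i+1,k}$ shows $a$ stays in row $i$ when $\tab_{i+1,k} > \tab_{i,k+1}$ (sub-case (i), no swap) and is displaced to row $i+1$ when $\tab_{i+1,k} < \tab_{i,k+1}$ (sub-case (ii), swap). For (d) the larger of $a,b$ always fits in row $i$ because $\htau_{i,k} = \tab_{i+1,k}$ is the larger column-$k$ entry, so the outcome is governed by whether the smaller entry fits in row $i+1$ under the constraint $\le \htau_{i+1,k} = \tab_{i,k}$, which is exactly the comparison of $\tab_{i,k}$ with $\tab_{i+1,k+1}$ separating (i) and (ii); here the relative order of $a$ and $b$ is pinned down by the consistency of the greedy placement already recorded in $\tab$. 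I expect the genuine obstacle to be not this case analysis but the invariant underlying the reduction, namely verifying that a discrepancy confined to rows $i,i+1$ never cascades into another row of $\bphi_{\sigma s_i}(U)$; establishing this cleanly requires tracking the greedy insertions together with the existence of the relevant boxes, and that is where the bulk of the careful bookkeeping resides.
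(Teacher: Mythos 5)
Your framework is the same as the paper's: write $\tab = \bphi_{\sigma}(U)$ and $\htau = \bphi_{\sigma s_i}(U)$ for the common column-sorted filling $U = \brho_{\sigma}(\tab)$, and compare the two runs of Algorithm~\ref{alg: phi map} column by column. Your part (a) is correct (and identical to the paper's), part (b) is fine because identical states of column $k$ force identical placements in column $k+1$, and your final case analysis in rows $i,i+1$ --- a single comparison of $\max\{\tab_{i,k+1},\tab_{i+1,k+1}\}$ against $\htau_{i,k}$ deciding swap versus no swap --- does reproduce the conclusions of (c)(i), (c)(ii), (d)(i), (d)(ii). The problem is that for (c) and (d) everything hinges on your ``key reduction'': that the greedy insertion of the $(k+1)$st column fills every row $r \neq i,i+1$ exactly as it does in $\tab$, so that precisely the pair $\{\tab_{i,k+1},\tab_{i+1,k+1}\}$ is left over for rows $i$ and $i+1$. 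You justify this only by the parenthetical remark that the immediate-left entries agree and the entry sets coincide, and you concede in your last sentence that verifying that the discrepancy never cascades ``is where the bulk of the careful bookkeeping resides'' --- but you never do that bookkeeping. The confinement claim is not a formal property of the greedy rule: when the column-$k$ entries in rows $i,i+1$ of $\htau$ differ from those of $\tab$, an entry $z$ of the $(k+1)$st column with $\tab_{i+1,k+1} < z < \tab_{i,k+1}$ could a priori be captured by row $i$ or $i+1$ of $\htau$, or an entry destined for those rows could overflow into a later row and displace others; ruling this out genuinely uses the SPCT structure.

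Concretely, this unproven step is where the paper spends essentially all of its effort in part (c): it first shows that the entries larger than $\tab_{i,k+1}$ are placed identically in both tableaux, then pins down $\htau_{i,k+1}$ and $\htau_{i+1,k+1}$ by a contradiction argument in two cases, one of which invokes the triple condition of Definition~\ref{def: PCT} (applied to the triple $(\tab_{i+1,k},\tab_{i+1,k+1},\tab_{r,k+1})$) to force the competing entry into row $i+1$, and only then concludes that the remaining smaller entries fall into their old positions. The row-decreasing property and the triple condition --- which your argument for (c) and (d) never invokes --- are exactly what prevent the cascade. So your proposal is a correct plan built on the same approach as the paper, but its central step is the actual content of the lemma, and as written the proof is incomplete.
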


\begin{proof}
For simplicity, set $\uptau := \brho_{\sigma}(\tab)$. 
Note that 
\begin{align}\label{redefine tau and tau-hat}
\tab = \mpsi_{\sigma}^\sigma(\tab) = \bphi_{\sigma}(\uptau) \quad \text{and} \quad \htau = \mpsi_{\sigma}^{\sigma s_i}(\tab) = \bphi_{\sigma s_i}(\uptau).
\end{align}

(a)
Since the $i$th row is decreasing, by the triple condition, we have the inequality $\tab_{i,k} > \tab_{i,k+1} > \tab_{i+1,k+1}$.
Our assertion can be easily verified in an inductive manner.

(b) 
From Algorithm~\ref{alg: phi map} one can see that the $(k+1)$st column of $\bphi_{\sigma}(\uptau)$ and $\bphi_{\sigma s_i}(\uptau)$ are completely determined by the $k$th column of $\bphi_{\sigma}(\uptau)$ and $\bphi_{\sigma s_i}(\uptau)$, respectively.
Thus the assertion is straightforward from the assumption.

(c)
Due to (a), we have $\tab_{i,k+1} > \tab_{i+1,k+1}$. 
To begin with, consider the entries $\tab_{j,k+1}$ greater than $\tab_{i,k+1}$. 
Since the $k$th column of $\tab$ is equal to the $k$th column of $\htau$ except for the $i$th and $(i+1)$st entries, by Algorithm~\ref{alg: phi map} again, we have  
\begin{align}\label{eq: htau eq tau j>i}
\htau_{j,k+1} \underset{\text{by }\eqref{redefine tau and tau-hat}}{=}  \bphi_{\sigma s_i}(\uptau)_{j,k+1} \underset{\text{by Algorithm~\ref{alg: phi map}}}{=}  \bphi_{\sigma}(\uptau)_{j,k+1} \underset{\text{by }\eqref{redefine tau and tau-hat}}{=}  \tab_{j,k+1}.
\end{align}

Now let us prove the assertion (i). We first show that $\htau_{i,k+1} = \tab_{i,k+1}$.
Since $\tab_{i,k+1} > \tab_{i+1,k+1}$,
$\tab_{i,k+1}$ should be placed before $\tab_{i+1,k+1}$ in the $(k+1)$st column of $\htau$ when applying $\mpsi^{\sigma s_i}_{\sigma}$ to $\tab$.
Combining~\eqref{eq: htau eq tau j>i} and the assumption $\htau_{i,k} = \tab_{i+1,k} > \tab_{i,k+1}$, we deduce that $\tab_{i,k+1}$ appears weakly above $(i,k+1)$ in $\htau$. 
On the other hand, under Algorithm~\ref{alg: phi map}, $\tab_{i,k+1}$ should be placed at the same position in $\tab$ and $\htau$ because of \eqref{eq: htau eq tau j>i} together with the assumption that the upper $i-1$ entries of the $k$th column of $\tab$ are equal to those in the $k$th column of $\htau$. Thus $\htau_{i,k+1} = \tab_{i,k+1}.$

To see that $\htau_{i+1,k+1} = \tab_{i+1,k+1}$, let $\htau_{i+1,k+1} = \tab_{r,k+1} < \tab_{i,k+1}$.
We have two cases.
\smallskip

{\it Case 1: $\tab_{r,k+1} < \tab_{i+1,k+1}< \tab_{i,k+1}$.}
First of all, note that $\tab_{i+1,k+1}$ is placed before $\tab_{r,k+1}$ in the $(k+1)$st column of $\htau$ when applying $\mpsi^{\sigma s_i}_{\sigma}$ to $\tab$.
In the above, we have placed all entries $\tab_{j,k+1}$ with $\tab_{j,k+1}\ge \tab_{i,k+1}$ in such a way that $\htau_{j,k+1} = \tab_{j,k+1}$. 
We now place all entries $\tab_{j,k+1}$ such that $\tab_{i+1,k+1} \le \tab_{j,k+1} < \tab_{i,k+1}$. Clearly they cannot be placed at the positions $(i,k+1)$ and $(i+1, k+1)$ since 
the former is already filled with $\tab_{i,k+1}$
and the latter is subject to be filled with $\tab_{r,k+1}$.
Since the $k$th column of $\tab$ is equal to the $k$th column of $\htau$ except for the $i$th and $(i+1)$st entries, by Algorithm~\ref{alg: phi map}, we see inductively that $\tab_{j,k+1}=\htau_{j,k+1}$ for all these entries.
But this cannot occur since $\tab_{i+1,k+1} \ne \tau_{r,k+1}$.
\smallskip

{\it Case 2: $\tab_{i+1,k+1} \le \tab_{r,k+1}< \tab_{i,k+1}$.}
First, we claim that $r \le i+1$. Otherwise, the triple $(\tab_{i+1,k},\tab_{i+1,k+1},\tab_{r,k+1})$
does not satisfy the triple condition in $\tab$ because 
$$\tab_{i+1,k}>\tab_{i,k+1} >\tab_{r,k+1} \quad \text{and} \quad \tab_{i+1,k+1} \le \tab_{r,k+1}.$$
As in {\it Case 1}, all entries $\tab_{j,k+1}$ such that $\tab_{r,k+1} < \tab_{j,k+1} < \tab_{i,k+1}$ 
are placed in the $(k+1)$st column of $\htau$ in such a way that $\htau_{j,k+1} = \tab_{j,k+1}$.  
Now let us place $\tab_{r,k+1}$.
If $r < i+1$, then the box $(r,k+1)$ is unfilled in $\htau$ and therefore $\tab_{r,k+1}$ should be placed in this box by Algorithm~\ref{alg: phi map}. This contradicts the assumption $\htau_{i+1,k+1} = \tab_{r,k+1}$.  
As a consequence, we deduce that $r=i+1$.
\smallskip

Up to now, we have placed all entries $\tab_{j,k+1}$ with $\tab_{j,k+1}\ge \tab_{i+1,k+1}$ in such a way that $\htau_{j,k+1} = \tab_{j,k+1}$.
Since the $k$th column of $\tab$ is equal to the $k$th column of $\htau$ except for the $i$th and $(i+1)$st entries, by Algorithm~\ref{alg: phi map}, the remaining entries should also be filled in such a way that $\htau_{j,k+1} = \tab_{j,k+1}$. 
So, we are done.

Next, let us prove the second assertion (ii).
Note that  
\begin{align}\label{eq: tau and htau eq for case2}
\htau_{i,k} = \tab_{i+1,k} < \tab_{i,k+1} \quad \text{and} \quad \htau_{i+1,k} = \tab_{i,k} > \tab_{i,k+1}.
\end{align}
By~\eqref{eq: htau eq tau j>i} and \eqref{eq: tau and htau eq for case2}, the entry $\tab_{i,k+1}$ should be placed weakly above $(i+1,k+1)$ in the $(k+1)$st column of $\htau$ except for $(i,k+1)$. Using the same argument as in the above, one sees that $\htau_{i+1,k+1} = \tab_{i,k+1}$.
Ignoring the row containing $\tab_{i,k}$, one may assume that the $k$th column of $\htau$ is equal to that of $\tab$, so   
the remaining entries should also be filled in such a way that $\htau_{i,k+1} = \tab_{i+1,k+1}$ and $\htau_{j,k+1} = \tab_{j,k+1}$ for $j \neq i,i+1$.

(d) The assertion can be proved in a similar way as in (c).
\end{proof}

\begin{proposition}\label{prop: psi map image}
Let $\alpha$ be a composition and $\sigma \in \SG_{\ell(\alpha)}$.
If there is $1 \le i \le \ell(\alpha)-1$ such that $\ell(\sigma s_i)  <  \ell(\sigma)$, then 
\begin{align*}
\mpsi^{\sigma s_i}_{\sigma}(\SPCTsa) = \coprod_{\alpha = \beta \bubact \pi_{i}} \SPCT^{\sigma s_i}(\beta).
\end{align*}
\end{proposition}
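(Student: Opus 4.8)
The plan is to view $\mpsi^{\sigma s_i}_\sigma$ as the restriction of the global bijection $\coprod_{\tal=\lambda}\SPCT^\sigma(\gamma)\to\coprod_{\tbe=\lambda}\SPCT^{\sigma s_i}(\beta)$ (with $\lambda=\tal$) and to deduce the asserted equality by showing that this bijection matches two set partitions, one of its domain and one of its codomain. First I would record the combinatorial meaning of the hypothesis: $\ell(\sigma s_i)<\ell(\sigma)$ is equivalent to $\sigma(i)>\sigma(i+1)$, so Proposition~\ref{Prop: comparibility} forces $\alpha_i\ge\alpha_{i+1}$ whenever $\SPCTsa\neq\emptyset$ (if $\SPCTsa=\emptyset$ both sides are empty). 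Set $p:=\alpha_{i+1}=\min\{\alpha_i,\alpha_{i+1}\}$. A direct reading of~\eqref{eq: bubble sorting operator} shows that $\{\beta:\beta\bubact\pi_i=\alpha\}$ equals $\{\alpha\}$ when $\alpha_i=\alpha_{i+1}$ and $\{\alpha,\alpha\cdot s_i\}$ when $\alpha_i>\alpha_{i+1}$; write $R_\alpha$ for the right-hand side $\coprod_{\beta\bubact\pi_i=\alpha}\SPCT^{\sigma s_i}(\beta)$.

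The key step is the inclusion $\mpsi^{\sigma s_i}_\sigma(\SPCTsa)\subseteq R_\alpha$. Fix $\tau\in\SPCTsa$ and put $\htau:=\mpsi^{\sigma s_i}_\sigma(\tau)$, so that $\tau=\bphi_\sigma(\brho_\sigma(\tau))$ and $\htau=\bphi_{\sigma s_i}(\brho_\sigma(\tau))$ are produced from the same input by two runs of Algorithm~\ref{alg: phi map} that differ only in Step~1. Since the types $\sigma$ and $\sigma s_i$ differ only in the relative order of the first-column entries in rows $i$ and $i+1$, the first columns of $\tau$ and $\htau$ coincide except that their entries in rows $i,i+1$ are interchanged, with $\tab_{i,1}>\tab_{i+1,1}$ because $\sigma(i)>\sigma(i+1)$. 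Starting from this base case I would run the column-by-column dichotomy of Lemma~\ref{lem: T_{i',k} all equal}: parts (a) and (c) show that each column of $\htau$ of index $\le p$ is obtained from the corresponding column of $\tau$ either unchanged or by swapping the entries in rows $i$ and $i+1$ (part~(a) keeps us in the regime $\tab_{i,k}>\tab_{i+1,k}$ governed by~(c)), while part~(b) shows that once a column is unchanged all later columns are unchanged too. Granting the boundary analysis discussed below, $\htau$ then agrees with $\tau$ in every row other than $i$ and $i+1$; as $\tau$ and $\htau$ share the underlying partition $\lambda$, the two unaffected families of rows force $\{\beta_i,\beta_{i+1}\}=\{\alpha_i,\alpha_{i+1}\}$ for the shape $\beta$ of $\htau$, so $\beta\in\{\alpha,\alpha\cdot s_i\}$ and hence $\beta\bubact\pi_i=\alpha$, i.e.\ $\htau\in R_\alpha$.

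With this inclusion in hand I would upgrade it to equality by a partition argument, rather than separately analysing the inverse map. Put $X:=\coprod_{\tal=\lambda}\SPCTsa$ and $Y:=\coprod_{\tbe=\lambda}\SPCT^{\sigma s_i}(\beta)$, so that $\mpsi^{\sigma s_i}_\sigma\colon X\to Y$ is a bijection. The sets $R_\alpha$, indexed by the shapes $\alpha$ compatible with $\sigma$, are pairwise disjoint since they arise from the fibres of $\beta\mapsto\beta\bubact\pi_i$, and they cover $Y$ once one checks the compatibility-transfer fact that $\beta$ compatible with $\sigma s_i$ implies $\beta\bubact\pi_i$ compatible with $\sigma$ (a finite verification over the pair types involving positions $i,i+1$, using that $\beta\bubact\pi_i$ always has its $i$-th part $\ge$ its $(i+1)$-st). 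Thus $\{R_\alpha\}$ is a set partition of $Y$ indexed in the same way as the partition $\{\SPCTsa\}$ of $X$. Because a bijection that sends each block of a partition of $X$ into the correspondingly indexed block of a partition of $Y$ must send it onto that block (apply $Y=\bigcup_\alpha\mpsi^{\sigma s_i}_\sigma(\SPCTsa)\subseteq\bigcup_\alpha R_\alpha=Y$ together with disjointness), we conclude $\mpsi^{\sigma s_i}_\sigma(\SPCTsa)=R_\alpha$, as desired.

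I expect the main obstacle to be exactly the boundary case deferred in the second paragraph: the behaviour of the columns of index $>p$ in the scenario where the swapping never stops on columns $1,\dots,p$ (which occurs only when $\alpha_i>\alpha_{i+1}$). Since Lemma~\ref{lem: T_{i',k} all equal} is stated only for $k\le p-1$, completing the claim that $\htau$ differs from $\tau$ only in rows $i$ and $i+1$ requires a direct argument at the $p\to p+1$ boundary, showing that the ``tail'' of row $i$ (the boxes in columns $p+1,\dots,\alpha_i$) is relocated as a block into row $i+1$ of $\htau$ while every entry outside rows $i,i+1$ stays fixed. This is where one must inspect the greedy placement of Algorithm~\ref{alg: phi map} rather than merely quote the lemma, and it is precisely the mechanism that produces the alternative shape $\alpha\cdot s_i$.
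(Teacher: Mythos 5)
Your overall architecture is sound, and your second step is a genuine improvement in economy over the paper: instead of proving the reverse inclusion $\supseteq$ directly (which in the paper is a full page of case analysis applying Lemma~\ref{lem: T_{i',k} all equal}(a), (b), (d) to $\mpsi^{\sigma}_{\sigma s_i}$ on each $\SPCT^{\sigma s_i}(\beta)$), you get it for free from three facts: $\mpsi^{\sigma s_i}_{\sigma}$ is a bijection from $\coprod_{\widetilde{\gamma}=\lambda}\SPCT^{\sigma}(\gamma)$ onto $\coprod_{\tbe=\lambda}\SPCT^{\sigma s_i}(\beta)$; the sets $R_{\gamma} := \coprod_{\beta \bubact \pi_i = \gamma}\SPCT^{\sigma s_i}(\beta)$ are unions of fibres of $\beta \mapsto \beta\bubact\pi_i$ and hence pairwise disjoint and covering (the covering needs no transfer fact if you index by the image of $\bubact\,\pi_i$, since blocks $\SPCT^{\sigma}(\gamma)$ with $\gamma_i<\gamma_{i+1}$ are empty by Proposition~\ref{Prop: comparibility}); and each block of the domain maps into the like-indexed block of the codomain. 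Disjointness plus surjectivity then forces blockwise equality. That counting argument is valid.

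The genuine gap is in the forward inclusion itself, and you have located it precisely but not closed it. Lemma~\ref{lem: T_{i',k} all equal} controls only columns of index at most $p=\min\{\alpha_i,\alpha_{i+1}\}$, so in the regime where the swap of rows $i$ and $i+1$ persists through column $p$, nothing you quote determines where the entries $\tab_{i,p+1},\ldots,\tab_{i,\alpha_i}$ land in $\htau$ --- and this is exactly the situation in which the alternative shape $\alpha\cdot s_i$ arises. Your write-up says this ``requires a direct argument'' inspecting Algorithm~\ref{alg: phi map} but never gives it; until it is given, the claim that $\htau$ agrees with $\tab$ outside rows $i$ and $i+1$, hence that its shape lies in $\{\alpha,\alpha\cdot s_i\}$, is unproven, and with it the key inclusion. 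The missing step (which is the paper's \emph{Case 2}) is the comparison of $\tab_{i+1,\alpha_{i+1}}$ with $\tab_{i,\alpha_{i+1}+1}$: if $\tab_{i+1,\alpha_{i+1}} > \tab_{i,\alpha_{i+1}+1}$, then since $\htau_{i,\alpha_{i+1}} = \tab_{i+1,\alpha_{i+1}}$ the greedy placement puts $\tab_{i,\alpha_{i+1}+1}$ at $(i,\alpha_{i+1}+1)$, and Lemma~\ref{lem: T_{i',k} all equal}(b) freezes all later columns, so the shape is $\alpha$; if instead $\tab_{i+1,\alpha_{i+1}} < \tab_{i,\alpha_{i+1}+1}$, then $\htau_{i,\alpha_{i+1}} < \tab_{i,\alpha_{i+1}+1} < \tab_{i,\alpha_{i+1}} = \htau_{i+1,\alpha_{i+1}}$ forces $\tab_{i,\alpha_{i+1}+1}$ into the box $(i+1,\alpha_{i+1}+1)$, after which the whole tail of row $i$ relocates into row $i+1$ (apply part (c) with the convention $\tab_{j,m}=0$ for $(j,m)\notin\tcd(\alpha)$), giving shape $\alpha\cdot s_i$. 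With that half page supplied, your proof is complete and somewhat shorter than the paper's.
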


\begin{proof}
First, we deal with the case where $\alpha_i  < \alpha_{i+1}$.
The assumption $\ell(\sigma s_i)  <  \ell(\sigma)$ implies that $\sigma(i) > \sigma(i+1)$. Therefore $\SPCTsa$ is the empty set by Proposition~\ref{Prop: comparibility}.
On the other hand, one can easily see that there are no $\beta \models n$ such that $\alpha = \beta \bubact \pi_i$, so the right hand side is also the empty set.

From now on, we assume that $\alpha_i  \ge \alpha_{i+1}$.
Let us prove the inclusion $\subseteq$.
Let $\tab \in \SPCTsa$ and set $\htau := \mpsi^{\sigma s_i}_{\sigma}(\tab)$.
We have two cases.
\smallskip

{\it Case 1: There is $1 \le k \le \alpha_{i+1} -1$ such that $\tab_{i+1,k} > \tab_{i,k+1}$.}
Let $1 \le k_0 \le \alpha_{i+1} - 1$ be the smallest integer such that $\tab_{i+1,k_0} > \tab_{i,k_0 + 1}$.
Then, by Lemma~\ref{lem: T_{i',k} all equal}(b) and Lemma~\ref{lem: T_{i',k} all equal}(c), we have the following identities:
\begin{align*}
&\htau_{i,m} = \tab_{i+1,m} \quad \text{and} \quad
\htau_{i+1,m} = \tab_{i,m} \quad \text{for $1 \le m \le k_0$},\\
&\htau_{i,m} = \tab_{i,m} \quad \text{and} \quad
\htau_{i+1,m} = \tab_{i+1,m} \quad \text{for $k_0+1 \le m \le \alpha_{i+1}$},\\
&\htau_{j,m}  = \tab_{j,m} \quad \text{for $j \neq i,i+1$ and $1 \le m \le \alpha_{i+1}$}.
\end{align*}
By Lemma~\ref{lem: T_{i',k} all equal}(b), 
$\htau_{j,m}  = \tab_{j,m}$ for $1 \le j \le \ell(\alpha)$ and $m \ge \alpha_{i+1}+1$,
and thus the shape of $\htau$ is $\alpha$. Since $\alpha = \alpha \bubact \pi_i$, we are done.
\smallskip

{\it Case 2: $\tab_{i+1,k} < \tab_{i,k+1}$ for all $1 \le k \le \alpha_{i+1} -1$.}
By Lemma~\ref{lem: T_{i',k} all equal}(c), we have the following identities:
\begin{align*}
&\htau_{i,m} = \tab_{i+1,m} \quad \text{and} \quad \htau_{i+1,m} = \tab_{i,m} \quad \text{for $1\le m \le \alpha_{i+1}$},\\
&\htau_{j,m}  = \tab_{j,m} \quad \text{for $j \neq i,i+1$ and $1 \le m \le \alpha_{i+1}$}.
\end{align*}
First, consider the case where  $\tab_{i+1,\alpha_{i+1}} > \tab_{i,\alpha_{i+1}+1}$.
Since $\htau_{i,\alpha_{i+1}} = \tab_{i+1,\alpha_{i+1}} > \tab_{i,\alpha_{i+1}+1}$, the entry $\tau_{i, \alpha_{i+1}+1}$ should be placed on $(i,\alpha_{i+1}+1)$ in $\htau$ when applying $\mpsi^{\sigma s_i}_{\sigma}$ to $\tab$. By Lemma~\ref{lem: T_{i',k} all equal}(b), 
$\htau_{j,m}  = \tab_{j,m}$ for $1 \le j \le \ell(\alpha)$ and $m \ge \alpha_{i+1}+1$,
thus the shape of $\htau$ is $\alpha$. Since $\alpha = \alpha \bubact \pi_i$, we are done.\\
Next, consider the case where $\tab_{i+1,\alpha_{i+1}} < \tab_{i,\alpha_{i+1}+1}$.
Since $\htau_{i,\alpha_{i+1}} = \tab_{i+1,\alpha_{i+1}} < \tab_{i,\alpha_{i+1}+1}$ and $\htau_{i+1,\alpha_{i+1}} = \tab_{i,\alpha_{i+1}} > \tab_{i,\alpha_{i+1}+1}$, $\tau_{i, \alpha_{i+1}+1}$ should be placed on $(i+1, \alpha_{i+1}+1)$ in $\htau$  when applying $\mpsi^{\sigma s_i}_{\sigma}$ to $\tab$.
By Lemma~\ref{lem: T_{i',k} all equal}(c), we have
\begin{align*}
&\htau_{i+1,m} = \tab_{i,m} \quad \text{for $m \ge \alpha_{i+1} + 1$},\\
&\htau_{j,m}  = \tab_{j,m} \quad \text{for $j \neq i, i+1$ and $m \ge \alpha_{i+1}+1$},
\end{align*}
thus the shape of $\htau$ is $\alpha \cdot s_i$. 
Here we are assuming that $\tab_{j,m} = 0$ for $(j,m) \not\in \tcd(\alpha)$.
Since $\alpha = (\alpha \cdot s_i) \bubact \pi_i$, we are done.
\medskip

Hence we verified the inclusion $\subseteq$.

Next, let us prove the opposite inclusion $\supseteq$, equivalently,
\begin{align*}
\SPCTsa \supseteq \mpsi_{\sigma s_i}^{\sigma}\left( \coprod_{\alpha = \beta \bubact \pi_{i}} \SPCT^{\sigma s_i}(\beta) \right).
\end{align*}
Let $\tab \in \SPCT^{\sigma s_i}(\beta)$ for some $\beta$ with $\alpha = \beta \bubact \pi_{i}$ and set $\otau := \mpsi_{\sigma s_i}^{\sigma}(\tab)$. Since $\ell(\sigma s_i)  <  \ell(\sigma)$, $\sigma s_i(i) < \sigma s_i (i+1)$, and therefore $\tab_{i,1} < \tab_{i+1,1}$. We have two cases.
\smallskip

{\it Case 1: $\beta = \alpha \cdot s_i$.}
We claim that there is no $1\le k \le \beta_i$ such that $\tab_{i,k} > \tab_{i+1,k}$. 
Otherwise, $\tab_{i,k} > \tab_{i+1,k} > \tab_{i+1,k+1}$ by row-decreasing condition. By the triple condition, we have $\tab_{i,k+1} > \tab_{i+1,k+1}$.
One can see inductively that $\tab_{i,m} > \tab_{i+1,m}$ for all $k \le m \le \beta_i$.
Since $\tab_{i,\beta_i} > \tab_{i+1,\beta_i}$, the box $(i,\beta_i+1)$ is not empty in $\tab$ by the triple condition, which is absurd. 
Therefore $\tab_{i,m} < \tab_{i+1,m}$ for all $1 \le m \le \beta_i$.
By the definition of $\mpsi^{\sigma}_{\sigma s_i}$, we have
\begin{align*}
&\otau_{i,m} = \tab_{i+1,m} \quad \text{for $1 \le m \le \beta_{i+1}$}, \quad \otau_{i+1,m} = \tab_{i,m} \quad \text{for $1 \le m \le \beta_{i}$},\\
&\otau_{j,m}  = \tab_{j,m} \quad \text{for $j \neq i,i+1$ and $1 \le m \le \beta_{j}$}
\end{align*}
and hence $\otau \in \SPCT^{\sigma}(\alpha)$ since $\beta = \alpha \cdot s_i$.
\smallskip

{\it Case 2: $\beta = \alpha$.}
First, consider the case where $\tab_{i,k} > \tab_{i+1,k}$ for some $1 \le k \le \alpha_{i+1}$.
Let $1 \le k_0 \le \alpha_{i+1}$ be the smallest integer such that $\tab_{i,k_0} > \tab_{i+1,k_0}$.
For $1 \le m < k_0 - 1$, one can see that $\tab_{i+1, m+1} > \tab_{i,m+1}$ and thus $\tab_{i+1, m+1} > \tab_{i,m}$ by the triple condition.
And, for $k_0 \le m \le \alpha_{i+1}$, we have $\tab_{i,m} > \tab_{i+1,m}$ by Lemma~\ref{lem: T_{i',k} all equal}(a). 
In the same way as in {\it Case 1} in the proof of the inclusion $\subseteq$, using Lemma~\ref{lem: T_{i',k} all equal}(b) and Lemma~\ref{lem: T_{i',k} all equal}(d), we can derive that
\begin{align*}
&\otau_{i,m} = \tab_{i+1,m} \quad \text{and} \quad
\otau_{i+1,m} = \tab_{i,m} \quad \text{for $1 \le m \le k_0 - 1$},\\
&\otau_{i,m} = \tab_{i,m} \quad \text{for $k_0 \le m \le \alpha_i$},\\
&\otau_{i+1,m} = \tab_{i+1,m} \quad \text{for $k_0 \le m \le \alpha_{i+1}$},\\
&\otau_{j,m}  = \tab_{j,m} \quad \text{for $j \neq i,i+1$ and $1 \le m \le \alpha_j$.}
\end{align*}
Hence $\otau \in \SPCT^{\sigma}(\alpha)$.\\
Next, consider the case where $\tab_{i,m} < \tab_{i+1,m}$ for all $1 \le m \le \alpha_{i+1}$.
By the definition of $\mpsi^{\sigma}_{\sigma s_i}$, we have
\begin{align*}
&\otau_{i,m} = \tab_{i+1,m} \quad \text{for $1 \le m \le \alpha_{i}$}, \\
&\otau_{i+1,m} = \tab_{i,m} \quad \text{for $1 \le m \le \alpha_{i+1}$},\\
&\otau_{j,m}  = \tab_{j,m} \quad \text{for $j \neq i,i+1$ and $1 \le m \le \alpha_j$.}
\end{align*}
Hence $\otau \in \SPCT^{\sigma}(\alpha)$.
\medskip

This completes the proof.
\end{proof}

In \cite[Section 5]{15TW}, Tewari and van Willigenburg expanded $\ch([\bfS_\alpha])$ in terms of fundamental quasisymmetric functions.
In \cite[Section 6]{19TW}, they also expanded $\ch([\bigoplus_{\sigma \in \SG_{\ell(\alpha)}}\bfS_\alpha^\sigma])$ in terms of fundamental quasisymmetric functions. 
Following their method with slight modification, one can obtain the expansion of $\ch([\bfSsa])$ in terms of fundamental quasisymmetric functions 
\begin{align*}
\ch([\bfSsa]) = 
\sum_{\tau \in \SPCT^{\sigma}(\alpha)} F_{\comp(\tau)}.
\end{align*}

Now we are ready to state the main result of this section.
\begin{theorem}\label{thm: charcteristic of SPCT}
For $\alpha \models n$ and $\sigma \in \SG_{\ell(\alpha)}$,
if there is $1 \le i \le \ell(\alpha)-1$ such that $\ell(\sigma s_i) < \ell(\sigma)$, then 
\begin{align*}
\ch([\bfSsa]) = \sum_{\alpha = \beta \bubact \hpi_{i}} \ch([\bfS^{\sigma s_i}_\beta]).
\end{align*}
\end{theorem}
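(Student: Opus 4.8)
The plan is to reduce the claimed character identity to the purely combinatorial statement already established in Proposition~\ref{prop: psi map image}, transporting everything through the descent-preserving bijection $\mpsi^{\sigma s_i}_{\sigma}$. The two inputs I would lean on are the fundamental expansion
\[
\ch([\bfSsa]) = \sum_{\tau \in \SPCT^{\sigma}(\alpha)} F_{\comp(\tau)}
\]
recorded just above the theorem, together with its analogue for each type $\sigma s_i$ and shape $\beta$, and the fact from Remark~\ref{rem: descent preserve} that $\mpsi^{\sigma s_i}_{\sigma}$ preserves $\Des$, hence $\comp$. Granting these, the proof is a reindexing argument and carries no new analytic content.

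First I would start from the fundamental expansion and reindex the sum along the bijection $\mpsi^{\sigma s_i}_{\sigma}$. Because this map is a bijection onto its image and satisfies $\comp(\tau) = \comp(\mpsi^{\sigma s_i}_{\sigma}(\tau))$, each summand is unchanged, so
\[
\ch([\bfSsa]) = \sum_{\tau \in \SPCTsa} F_{\comp(\tau)} = \sum_{\htau \in \mpsi^{\sigma s_i}_{\sigma}(\SPCTsa)} F_{\comp(\htau)}.
\]
Next I would invoke Proposition~\ref{prop: psi map image} to replace the image set by the disjoint union $\coprod_{\alpha = \beta \bubact \hpi_i} \SPCT^{\sigma s_i}(\beta)$, which splits the single sum into an outer sum over compositions $\beta$ with $\alpha = \beta \bubact \hpi_i$ and an inner sum over $\SPCT^{\sigma s_i}(\beta)$. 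Finally, reading the fundamental expansion backwards for type $\sigma s_i$ identifies each inner sum with $\ch([\bfS^{\sigma s_i}_\beta])$, giving
\[
\ch([\bfSsa]) = \sum_{\alpha = \beta \bubact \hpi_i} \; \sum_{\htau \in \SPCT^{\sigma s_i}(\beta)} F_{\comp(\htau)} = \sum_{\alpha = \beta \bubact \hpi_i} \ch([\bfS^{\sigma s_i}_\beta]),
\]
as desired.

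I do not expect any genuine obstacle in this step, since the hard work has already been absorbed into Proposition~\ref{prop: psi map image} and its supporting column-by-column bookkeeping in Lemma~\ref{lem: T_{i',k} all equal}. The only point deserving a remark is the degenerate situation $\alpha_i < \alpha_{i+1}$: here the hypothesis $\ell(\sigma s_i) < \ell(\sigma)$ forces $\sigma(i) > \sigma(i+1)$, so $\alpha$ fails to be compatible with $\sigma$ and Proposition~\ref{Prop: comparibility} makes $\SPCTsa$ empty, while simultaneously no $\beta$ satisfies $\alpha = \beta \bubact \hpi_i$; thus both sides vanish and the identity reads $0 = 0$. As this case is already subsumed in the statement of Proposition~\ref{prop: psi map image}, the reindexing above handles it uniformly without separate treatment.
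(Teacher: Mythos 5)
Your proposal is correct and is essentially the paper's own proof: both start from the fundamental expansion $\ch([\bfSsa]) = \sum_{\tau \in \SPCTsa} F_{\comp(\tau)}$, use the fact that $\mpsi^{\sigma s_i}_{\sigma}$ preserves descent compositions (the paper rederives this from column-preservation of $\brho_\sigma$ and $\bphi_{\sigma s_i}$, while you cite Remark~\ref{rem: descent preserve}), and then reindex via Proposition~\ref{prop: psi map image} to split the sum over $\coprod_{\alpha = \beta \bubact \hpi_i} \SPCT^{\sigma s_i}(\beta)$ and recognize each piece as $\ch([\bfS^{\sigma s_i}_\beta])$. Your extra remark on the degenerate case $\alpha_i < \alpha_{i+1}$ is accurate but, as you note, already subsumed in Proposition~\ref{prop: psi map image}.
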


\begin{proof}
The set of all entries in each column is invariant under $\bphi_{\sigma s_i}$ and $\brho_{\sigma}$.
This implies that the descent set of an SPCT is invariant under $\bphi_{\sigma s_i}$ and $\brho_{\sigma}$,
so invariant under $\mpsi_{\sigma}^{\sigma s_i}$.
Thus, by Proposition~\ref{prop: psi map image}, we have
\begin{equation*}
\ch([\bfSsa]) 
= \sum_{\tau \in \SPCT^{\sigma}(\alpha)} F_{\comp(\tau)} 
= \sum_{\tau \in \coprod_{\alpha = \beta \bubact \pi_{i}}\SPCT^{\sigma s_i}(\beta)} F_{\comp(\tau)}
= \sum_{\alpha = \beta \bubact \hpi_{i}} \ch([\bfS^{\sigma s_i}_\beta]).\qedhere
\end{equation*}
\end{proof}

The following corollary is an immediate consequence of Theorem~\ref{thm: charcteristic of SPCT}.

\begin{corollary}\label{cor: qschur exp}
Let $\alpha$ be a composition. For any $\sigma \in \SG_{\ell(\alpha)}$, we have
\begin{align*}
\ch([\bfSsa]) &= \sum_{\alpha = \beta \bubact \hpi_{\sigma}} \calS_{\beta},
\end{align*}
where $\calS_{\beta}$ is the quasisymmetric Schur function.
Furthermore, if $\alpha$ is a partition and $w_0$ is the longest element in $\SG_{\ell(\alpha)}$, 
then we have 
\begin{align*}
\ch([\bfS_\alpha^{w_0}]) &= s_\alpha, 
\end{align*}
where $s_\alpha$ is the Schur function.
\end{corollary}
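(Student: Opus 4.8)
The plan is to deduce the first identity by iterating Theorem~\ref{thm: charcteristic of SPCT} along a reduced word for $\sigma$ until the type is reduced to the identity, and then to read off the Schur-function identity as the special case $\sigma = w_0$. First I would fix a reduced expression $\sigma = s_{i_1} s_{i_2} \cdots s_{i_p}$ with $p = \ell(\sigma)$. Since every prefix $s_{i_1}\cdots s_{i_k}$ of a reduced word is again reduced, we have in particular $\ell(\sigma s_{i_p}) = p-1 < \ell(\sigma)$, so Theorem~\ref{thm: charcteristic of SPCT} applies with $i = i_p$ and gives
\[
\ch([\bfSsa]) = \sum_{\alpha = \beta_1 \bubact \hpi_{i_p}} \ch([\bfS^{\sigma s_{i_p}}_{\beta_1}]).
\]
As $\sigma s_{i_p} = s_{i_1}\cdots s_{i_{p-1}}$ is still reduced, I would apply the theorem again with $i = i_{p-1}$ to each summand, and keep peeling simple transpositions off from the right. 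After $p$ steps the type becomes $\sigma s_{i_p} s_{i_{p-1}} \cdots s_{i_1} = \id$, and there we use $\ch([\bfS^{\id}_{\beta_p}]) = \calS_{\beta_p}$ from \cite{15TW}.

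This produces the nested sum
\[
\ch([\bfSsa]) = \sum_{\alpha = \beta_1 \bubact \hpi_{i_p}} \ \sum_{\beta_1 = \beta_2 \bubact \hpi_{i_{p-1}}} \cdots \sum_{\beta_{p-1} = \beta_p \bubact \hpi_{i_1}} \calS_{\beta_p},
\]
and the crux is that it collapses to a single-index sum. Because each $\hpi_{i_j}$ acts as a genuine function on compositions under $\bubact$, the intermediate terms $\beta_{p-1},\ldots,\beta_1$ are uniquely determined by $\beta_p$ via $\beta_{p-1} = \beta_p \bubact \hpi_{i_1}, \ldots, \beta_1 = \beta_2 \bubact \hpi_{i_{p-1}}$; thus each chain contributing to the sum is recorded faithfully by its initial term $\beta_p$, subject only to the constraint $\beta_p \bubact \hpi_{i_1} \bubact \cdots \bubact \hpi_{i_p} = \alpha$. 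Since $\bubact$ is a right $H_n(0)$-action (see \cite{16Huang}), this constraint reads exactly $\beta_p \bubact \hpi_{\sigma} = \alpha$, and as $\hpi_\sigma$ is independent of the chosen reduced expression, the nested sum becomes $\sum_{\alpha = \beta \bubact \hpi_\sigma} \calS_\beta$ with every admissible $\beta$ occurring precisely once. This settles the first identity.

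For the Schur-function case I would specialize to $\sigma = w_0$. Using the reduced word $w_0 = (s_1)(s_2 s_1) \cdots (s_{\ell-1}\cdots s_1)$, the operator $\bubact \hpi_{w_0}$ realizes a full bubble sort, so $\beta \bubact \hpi_{w_0} = \tbe$ for every composition $\beta$ (see \cite{16Huang}). Consequently the index set $\{\beta : \beta \bubact \hpi_{w_0} = \alpha\}$ coincides with $\{\beta : \tbe = \alpha\}$, the set of all rearrangements of the partition $\alpha$. The first identity then gives $\ch([\bfS_\alpha^{w_0}]) = \sum_{\tbe = \alpha} \calS_\beta$, and the right-hand side equals $s_\alpha$ by the refinement $s_\lambda = \sum_{\tbe = \lambda} \calS_\beta$ of Schur functions by quasisymmetric Schur functions established in \cite{11HLMW}.

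The step I expect to be the main obstacle is the bookkeeping that collapses the nested sum to a single-index sum of multiplicity one: one must confirm that composing the bubble-sort operators along the reduced word genuinely reproduces the single operator $\hpi_\sigma$, which is where associativity of the right action $\bubact$ and the reduced-word-independence of $\hpi_\sigma$ are both essential, and that no composition is counted twice. By contrast, the two ingredients needed for the $w_0$ case, namely the full-sort identity $\beta \bubact \hpi_{w_0} = \tbe$ and the quasi-Schur refinement of $s_\lambda$, are standard facts that I would cite rather than reprove.
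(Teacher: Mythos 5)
Your proposal is correct and is essentially the paper's own argument: the paper derives Corollary~\ref{cor: qschur exp} exactly by iterating Theorem~\ref{thm: charcteristic of SPCT} along a reduced word until the type reaches $\id$ (where $\ch([\bfS_\beta]) = \calS_\beta$ from \cite{15TW}), with the collapse to $\sum_{\alpha = \beta \bubact \hpi_\sigma} \calS_\beta$ justified by $\bubact$ being a right $H_n(0)$-action, and the $w_0$ case following from the full-sort identity $\beta \bubact \hpi_{w_0} = \tbe$ together with the refinement $s_\lambda = \sum_{\tbe = \lambda} \calS_\beta$ of \cite{11HLMW}. You simply spell out the bookkeeping that the paper leaves implicit in calling the corollary an immediate consequence.
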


We close this section by providing a new $\mathbb{Z}$-basis of $\Qsym$ consisting of suitable $\ch([\bfSsa])$'s.
For a partition $\lambda$, we let $(\SG_{\ell(\lambda)})_{\lambda}$ be the stabilizer subgroup of $\SG_{\ell(\lambda)}$ with respect to $\lambda$, that is, $(\SG_{\ell(\lambda)})_{\lambda} = \{\sigma \in \SG_{\ell(\lambda)} \mid \lambda \cdot \sigma = \lambda\}$.  
Define $\mathbf{I}_\lambda$ to be the set of minimal length coset representatives of $\SG_{\ell(\lambda)}/ (\SG_{\ell(\lambda)})_{\lambda}$ and set
\[
\mathbf{B}_n := \bigcup_{\lambda \vdash n} \left\{ \ch([\bfS_\lambda^{\sigma}]) \; \middle| \; \sigma \in \mathbf{I}_\lambda \right\} \, .
\]

\begin{corollary}\label{cor: basis for Qsym}
With the above notation,
$\mathbf{B}_n$ is a $\mathbb{Z}$-basis of $\Qsym_n$.
\end{corollary}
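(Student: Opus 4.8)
The plan is to compare $\mathbf{B}_n$ with the $\mathbb{Z}$-basis $\{\calS_\beta \mid \beta \models n\}$ of $\Qsym_n$ and to show that the transition matrix between them is invertible over $\mathbb{Z}$. By Corollary~\ref{cor: qschur exp} we have $\ch([\bfS_\lambda^\sigma]) = \sum_{\lambda = \beta \bubact \hpi_\sigma} \calS_\beta$, so the coefficient of $\calS_\beta$ in $\ch([\bfS_\lambda^\sigma])$ is $1$ if $\beta \bubact \hpi_\sigma = \lambda$ and $0$ otherwise. Since $\beta \bubact \hpi_\sigma$ is always a rearrangement of $\beta$, this coefficient vanishes unless $\tbe = \lambda$; hence, grouping the columns $\beta$ by $\tbe$ and the rows $(\lambda,\sigma)$ by $\lambda$, the transition matrix is block diagonal with one block $M_\lambda$ for each $\lambda \vdash n$. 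The rows of $M_\lambda$ are indexed by $\sigma \in \mathbf{I}_\lambda$ and its columns by the rearrangements of $\lambda$; both index sets have size $|\SG_{\ell(\lambda)}|/|(\SG_{\ell(\lambda)})_\lambda|$, so $M_\lambda$ is square. It thus suffices to prove $\det M_\lambda = \pm 1$ for every $\lambda$.

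First I would pin down the bijection underlying the two index sets. For $\sigma \in \mathbf{I}_\lambda$ set $\beta_\sigma := \lambda \cdot \sigma^{-1}$. The set of $v$ with $\beta_\sigma \cdot v = \lambda$ is exactly the left coset $\sigma(\SG_{\ell(\lambda)})_\lambda$, whose unique minimal-length element is $\sigma$ precisely because $\sigma \in \mathbf{I}_\lambda$. Therefore $\sigma \mapsto \beta_\sigma$ is a bijection from $\mathbf{I}_\lambda$ onto the rearrangements of $\lambda$, and $\sigma$ is the minimal-length permutation sorting $\beta_\sigma$ into the weakly decreasing word $\lambda$. Writing $\mathrm{inv}(\beta) := \#\{a < b \mid \beta_a < \beta_b\}$, one has $\ell(\sigma) = \mathrm{inv}(\beta_\sigma)$ for this minimal sorting permutation.

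The two facts I would then establish about the bubble action are: (A) $\beta_\sigma \bubact \hpi_\sigma = \lambda$ for each $\sigma \in \mathbf{I}_\lambda$; and (B) if $\beta \bubact \hpi_\sigma = \lambda$ then $\mathrm{inv}(\beta) \le \ell(\sigma)$, with equality only when $\beta = \beta_\sigma$. For (A), fix any reduced word $s_{i_1}\cdots s_{i_p}$ of $\sigma$; since $\sigma$ is the minimal-length sorting permutation of $\beta_\sigma$, at each stage the adjacent pair being acted on is a strict ascent, so by~\eqref{eq: bubble sorting operator} the operator $\hpi_{i_k}$ performs a genuine swap, and the cumulative effect is the honest permutation $\beta_\sigma \cdot \sigma = \lambda$. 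For (B), note from~\eqref{eq: bubble sorting operator} that a single $\hpi_i$ either fixes $\beta$ or swaps a strict ascent, the latter lowering $\mathrm{inv}$ by exactly $1$; since $\mathrm{inv}(\lambda) = 0$ and $\hpi_\sigma$ is a product of $\ell(\sigma)$ such operators, we get $\mathrm{inv}(\beta) \le \ell(\sigma)$, and equality forces every step to be a genuine swap, whence $\beta \cdot \sigma = \lambda$ and $\beta = \lambda \cdot \sigma^{-1} = \beta_\sigma$.

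Finally I would assemble these into unitriangularity. Order both $\mathbf{I}_\lambda$ and the rearrangements of $\lambda$ by the common statistic $\ell(\sigma) = \mathrm{inv}(\beta_\sigma)$, refined to a linear order. By (B), the $(\sigma,\beta)$-entry of $M_\lambda$ is nonzero only when $\mathrm{inv}(\beta) \le \ell(\sigma)$, and the boundary case $\mathrm{inv}(\beta) = \ell(\sigma)$ forces $\beta = \beta_\sigma$, where (A) makes the entry equal to $1$. Hence $M_\lambda$ is triangular with $1$'s on the diagonal, so $\det M_\lambda = 1$; the full block-diagonal transition matrix then has determinant $\pm 1$ and is invertible over $\mathbb{Z}$. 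Since $\{\calS_\beta\}$ is a $\mathbb{Z}$-basis of $\Qsym_n$, so is $\mathbf{B}_n$; this also shows the listed elements are distinct, so $|\mathbf{B}_n| = 2^{n-1}$, the rank of $\Qsym_n$. The main obstacle is the combinatorics of the bubble-sort operators in (A) and (B)---checking that a reduced word of the minimal sorting permutation induces only genuine swaps, and that $\mathrm{inv}$ drops by at most $1$ per generator---which is exactly where the minimality built into $\mathbf{I}_\lambda$ enters.
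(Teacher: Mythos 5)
Your proof is correct and takes essentially the same route as the paper: both expand $\ch([\bfS_\lambda^{\sigma}])$ in the quasisymmetric Schur basis via Corollary~\ref{cor: qschur exp}, identify the diagonal term $\calS_{\lambda\cdot\sigma^{-1}}$ coming from $(\lambda\cdot\sigma^{-1})\bubact\hpi_{\sigma}=\lambda$, and conclude by unitriangularity together with the coset bijection $\sigma\mapsto\lambda\cdot\sigma^{-1}$. The only difference is bookkeeping: the paper proves $(\lambda\cdot\sigma^{-1})\bubact\hpi_{\sigma}=\lambda$ by induction on $\ell(\sigma)$ and records the lower-order terms via the weak Bruhat order ($\beta=\lambda\cdot\gamma$ with $\gamma\lneq\sigma^{-1}$), whereas you run the same triangularity through the inversion statistic $\mathrm{inv}(\beta)$.
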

\begin{proof}
Let $\lambda = (\lambda_1, \lambda_2, \ldots , \lambda_\ell) \vdash n$ and $\sigma \in \SG_\ell$.
We will first prove that $(\lambda \cdot \sigma^{-1})\bubact \hpi_{\sigma} = \lambda$ using induction on the length of $\sigma$.
If $\sigma = s_i$,
then  
\[
(\lambda \cdot s_i)\bubact \hpi_{i} = (\lambda_1, \ldots, \lambda_{i+1}, \lambda_i, \ldots, \lambda_\ell) \bubact \hpi_i = (\lambda_1, \ldots, \lambda_{i}, \lambda_{i+1}, \ldots, \lambda_\ell)
\]
since $\lambda_i \geq \lambda_{i+1}$.
For $1 \le d < \ell(w_0)$, assume that $(\lambda \cdot \gamma^{-1})\bubact \hpi_{\gamma} = \lambda$ for every $\gamma \in \SG_\ell$ with $\ell(\gamma) = d$.
For $\sigma \in \SG_\ell$ of length $d+1$,
one can write $\sigma$ as $s_i \gamma$ for some $1 \leq i < \ell$ and $\gamma \in \SG_\ell$ with $\ell(\gamma) = d$.
Since $\ell(s_i \gamma) > \ell(\gamma)$, that is, $\gamma^{-1}(i) < \gamma^{-1}(i+1)$, we see that $\lambda_{\gamma^{-1}(i)} \geq \lambda_{\gamma^{-1}(i+1)}$.
Hence we have
\begin{align*}
  (\lambda \cdot \sigma^{-1})\bubact \hpi_{\sigma}  & = (\lambda_{\gamma^{-1}(s_i(1))}, \ldots, \lambda_{\gamma^{-1}(s_i(i))}, \lambda_{\gamma^{-1}(s_i(i+1))}, \ldots, \lambda_{\gamma^{-1}(s_i(\ell))} )\bubact \hpi_{\sigma} \\
   &= \left( (\lambda_{\gamma^{-1}(1)}, \ldots, \lambda_{\gamma^{-1}(i+1)}, \lambda_{\gamma^{-1}(i)}, \ldots, \lambda_{\gamma^{-1}(\ell)} )\bubact \hpi_i \right)\bubact \hpi_{\gamma} \\
  &= (\lambda_{\gamma^{-1}(1)}, \ldots, \lambda_{\gamma^{-1}(i)}, \lambda_{\gamma^{-1}(i+1)}, \ldots, \lambda_{\gamma^{-1}(\ell)} )\bubact \hpi_{\gamma}\\
  & = \lambda \, ,
\end{align*}
as desired.

Applying Proposition~\ref{prop: psi map image} repeatedly,
one can see that the $\mpsi^{\rm{id}}_{\sigma}$ is a bijection between $\SPCTsa$ and $ \coprod_{\alpha = \beta \bubact \hpi_\sigma} \SPCT(\beta)$.
This shows that the quasi-Schur expansion of $\ch([\bfS_\lambda^{\sigma}])$ is multiplicity-free.
Thus we deduce from Corollary~\ref{cor: qschur exp} that
\[
\ch([\bfS_\lambda^{\sigma}]) = \calS_{\lambda \cdot \sigma^{-1}} + \sum_{\underset{\gamma \, \lneq  \, \sigma^{-1}}{\beta = \lambda \cdot \gamma}} \calS_\beta \, .
\]
Here $<$ denotes the weak  Bruhat  order on $\SG_{\ell}$.
Now our assertion follows from the fact: 
$\lambda \cdot \gamma^{-1}
 = \lambda \cdot \rho^{-1}$ if and only if $\gamma$ and $\rho$ are in the same left coset of $\SG_{\ell(\lambda)}/ (\SG_{\ell(\lambda)})_{\lambda}$.
This completes the proof.
\end{proof}

\section{The projective cover of the canonical submodule of $\bfSsa$}\label{Sect5}
To begin with, we recall the notion of a projective cover.
Let $R$ be a left artin ring and $A,B$ be finitely generated $R$-modules.
An epimorphism $f:A\to B$ is called an \emph{essential epimorphism} if a morphism $g: X\to A$ is an epimorphism 
whenever $f \circ g:X\to B$ is an epimorphism, or equivalently, $\ker f \subset {\rm rad}(A)$.
A \emph{projective cover} of $A$ is an essential epimorphism $f:P\to A$ with $P$ a projective $R$-module, 
which always exists and is unique up to isomorphism. 
It plays an extremely important role in understanding the structure of $A$ (see~\cite{95ARS}).

In this section, we will only consider $\alpha \models n$ and $\sigma \in \SG_{\ell(\alpha)}$ such that $\alpha$ is compatible with $\sigma$ since $\bfSsa$ is zero otherwise (see Proposition~\ref{Prop: comparibility}).
Recall that we have introduced the canonical source tableau $\tauC$ in Definition~\ref{def: canonical tableau}.
We call the class containing $\tauC$, denoted by $C$, the \emph{canonical class} in $\mathcal{E}^\sigma(\alpha)$, and ${\bfS}^\sigma_{\alpha,C}$ the \emph{canonical submodule} of $\bfSsa$.
The purpose of this section is to find the projective cover of $\bfSsaC$. 
More precisely, we show that ${\bfS}^\sigma_{\alpha,C}$ appears as a homomorphic image of a projective indecomposable module $\calP_I$ for some $I \subseteq [n-1]$.

Fayers~\cite{05Fayers} introduced an automorphism $\Th$ of $H_n(0)$ defined by
\begin{align*}
\Th: \hpi_i \mapsto -\opi_i = 1 - \hpi_i \quad \text{for } 1 \leq i \leq n-1.
\end{align*}
Given an $H_n(0)$-module $M$,   
it gives another $H_n(0)$-action $\Thact$ on the vector space $M$ defined by 
$$
\hpi_i \Thact v := \Th(\hpi_i) \cdot v = -\opi_i \cdot v \quad \text{for } 1 \leq i \leq n-1.
$$
The resulting $H_n(0)$-module is denoted by $\theta[M]$.
In particular, for the $H_n(0)$-module $\bfP_\alpha$ in Subsection~\ref{subsec: PIM}, 
the $H_n(0)$-action on $\theta[\bfP_\alpha]$ is defined by
\begin{align*}
\hpi_i * T = \begin{cases}
T, & \text{if $i$ is in a higher row of $T$ than $i+1$},\\
0, & \text{if $i$ is in the same row of $T$ as $i+1$},\\
- s_i \cdot T, & \text{if $i$ is in a lower row of $T$ than $i+1$}
\end{cases}
\end{align*}
for $i \in [n-1]$ and $T \in \SRT(\alpha)$. 
It was remarked in~\cite[Remark 5.2]{16Huang} that there is another $H_n(0)$-action on $\bfP_\alpha$ defined as follows:
for $i \in [n-1]$ and $T \in \SRT(\alpha)$,
\begin{align*}
\hpi_i \star T: = \begin{cases}
T, & \text{if $i$ is in a higher row of $T$ than $i+1$},\\
0, & \text{if $i$ is in the same row of $T$ as $i+1$},\\
s_i \cdot T, & \text{if $i$ is in a lower row of $T$ than $i+1$.}
\end{cases}
\end{align*}
The resulting $H_n(0)$-module is denoted by $\overline{\hThbfP}$. Therein it was stated that $\theta[\bfP_\alpha]$ is isomorphic to $\overline{\hThbfP}$ without an explicit isomorphism. For the completeness, let us briefly explain how to construct an isomorphism. 

Let $T_0 \in \SRT(\alpha)$ be the standard ribbon tableau obtained by filling $\trd(\alpha)$ 
with the entries $1,2,\ldots,n$ from top to bottom and from left to right. 
Note that $\bfP_\alpha$ is generated by $T_0$ cyclically (see~\cite{16Huang}).
For this reason, we call $T_0$ the \emph{source tableau of $\bfP_\alpha$}.
Let $T \in \SRT(\alpha)$.
One can easily see that there exists a unique minimal length coset representative $\rho_T \in \SG_n / \SG_{n,(\Des(\alpha))^{\rm c}}$ with $T = \pi_{\rho_T} * T_0$
and the map 
\begin{equation}\label{Eq:Isom1}
\iota : \ThbfP \to \overline{\hThbfP},  \qquad T \mapsto {\rm sgn}(\rho_T) T
\end{equation}
is an $H_n(0)$-module isomorphism. 

For $T \in \SRT(\alpha)$ and $\sigma \in \SG_{\ell(\alpha^{\rm c})}$, 
we define $\tab_T$ by the filling of $\tcd(\alpha^{\rm c} \cdot \sigma)$ whose
$i$th row is filled with the entries in the $\sigma(i)$th column of $T$ in decreasing order for $i = 1, 2, \ldots, \ell(\alpha^{\rm c})$.

Consider the $\C$-linear map
$$
\Phi^\sigma_{\alpha}: \overline{\hThbfP} \to \bfS^{\sigma}_{\alpha^{\rm c} \cdot \sigma}
$$ 
defined by 
\begin{displaymath}
\Phi^\sigma_{\alpha}(T) =
\begin{cases}
\tab_T & \text{if it is an SPCT of type $\sigma$},\\
0 & \text{otherwise}.
\end{cases}
\end{displaymath}

\begin{example}\label{Example52}
Let $\alpha = (2,2,1,1,1,2,1)$. 
If $\sigma = 2314 \in \SG_4$, then $\alpha^{\rm c} \cdot \sigma = (2,5,1,2)$
since $\alpha^{\rm c} = (1,2,5,2)$.
Let 
\begin{displaymath}
T_1 = \begin{array}{c}
\begin{ytableau}
\none & \none & \none & 9  \\
\none & \none & 4  & 10 \\
\none & \none & 5 \\
\none & \none & 6 \\
\none & \none & 7 \\
\none & 2 & 8  \\
1 & 3
\end{ytableau}
\end{array}
\qquad
T_2 = \begin{array}{c}
\begin{ytableau}
\none & \none & \none & 4 \\
\none & \none & 5 & 10 \\
\none & \none & 6 \\
\none & \none & 7 \\
\none & \none & 8 \\
\none & 2 & 9  \\
1 & 3
\end{ytableau}
\end{array}
\qquad
T_3 = \begin{array}{c}
\begin{ytableau}
\none & \none & \none & 3 \\
\none & \none & 4 & 9 \\
\none & \none & 5 \\
\none & \none & 7 \\
\none & \none & 8 \\
\none & 1 & 10  \\
2 & 6
\end{ytableau}
\end{array}.
\end{displaymath}
Then 
\begin{displaymath}
\ytableausetup{mathmode, boxsize=1.2em}
\tab_{T_1} = \begin{array}{c}
\begin{ytableau}
3 & 2 \\
8 & 7 & 6  & 5 & 4 \\
1 \\
10 & 9
\end{ytableau}
\end{array}
\hskip 10mm
\tab_{T_2} = \begin{array}{c}
\begin{ytableau}
3 & 2 \\
9 & 8 & 7 & 6 & 5\\
1\\
10 & 4
\end{ytableau}
\end{array}
\hskip 10mm
\tab_{T_3} =
\begin{array}{c}
\begin{ytableau}
*(black!10)  6 & *(black!10) 1 \\
10 & 8 & 7 & 5 & 4 \\
2 \\
9 & *(black!10)  3
\end{ytableau}
\end{array}.
\end{displaymath}
Therefore $\Phi^\sigma_\alpha(T_i) = \tab_{T_i}$ for $i = 1,2$. 
However, $\Phi^\sigma_{\alpha}(T_3) = 0$ since $\tab_{T_3}$ does not satisfy the triple condition (see the shaded boxes).
\end{example}

\begin{lemma}\label{Lem:Surjective}
The map $\Phi^\sigma_\alpha$ is a surjective $\C$-linear map.
\end{lemma}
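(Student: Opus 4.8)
The plan is to note that $\C$-linearity is essentially built into the definition, so that the entire content of the lemma is surjectivity. Since $\Phi^\sigma_\alpha$ is prescribed on the basis $\SRT(\alpha)$ of $\overline{\hThbfP}$ and extended linearly, and since each basis vector is sent either to the SPCT $\tab_T$ or to $0$, the map is $\C$-linear with image inside $\bfS^\sigma_{\alpha^\rmc \cdot \sigma}$. To establish surjectivity I would exhibit, for an arbitrary $U \in \SPCT^\sigma(\alpha^\rmc \cdot \sigma)$, a standard ribbon tableau $T \in \SRT(\alpha)$ with $\tab_T = U$. The construction simply inverts the recipe for $\tab_T$: for each $i$, reverse the weakly decreasing $i$th row of $U$ into an increasing string and install it as the $\sigma(i)$th column of $\trd(\alpha)$, read from top to bottom.

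First I would check that this is well defined. Reading the columns of $\trd(\alpha)$ from left to right, their lengths are exactly the parts of $\alpha^\rmc$; this is the standard fact that $\trd(\alpha^\rmt)$ is the diagonal reflection of $\trd(\alpha)$ together with $\alpha^\rmt = (\alpha^\rmc)^\rmr$. Since the $i$th row of $U$ has length $(\alpha^\rmc \cdot \sigma)_i = (\alpha^\rmc)_{\sigma(i)}$, it fits precisely into the $\sigma(i)$th column, whose length is $(\alpha^\rmc)_{\sigma(i)}$. As every box of the ribbon belongs to a unique column, this assigns each box one value and uses $1,2,\ldots,n$ exactly once.

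Next I would verify $T \in \SRT(\alpha)$. The columns of $T$ increase from top to bottom by construction. The crux — and the step I expect to be the main obstacle — is the row-increasing condition, since this is where the $\SPCT$ structure of $U$ must enter. I would reduce it to pairwise comparisons: in a ribbon, consecutive columns $j$ and $j+1$ overlap in exactly one row, in which the top box of column $j$ sits immediately to the left of the bottom box of column $j+1$, and every multi-box row decomposes into such consecutive overlaps. Thus it suffices to show that the smallest entry of column $j$ is less than the largest entry of column $j+1$, i.e.
\[
U_{\sigma^{-1}(j),\, (\alpha^\rmc)_j} < U_{\sigma^{-1}(j+1),\, 1} \qquad (1 \le j \le \ell(\alpha^\rmc)-1).
\]
Writing $a := \sigma^{-1}(j)$ and $b := \sigma^{-1}(j+1)$, the left side is the smallest (rightmost) entry of row $a$ and the right side is the largest (leftmost) entry of row $b$. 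Since $\sigma(a) = j < j+1 = \sigma(b)$ and the first column of $U$ standardizes to $\sigma$, we have $U_{a,1} < U_{b,1}$; combined with the weak decrease along row $a$ this gives $U_{a,(\alpha^\rmc)_j} \le U_{a,1} < U_{b,1}$, as required. It is worth remarking that the triple condition is not actually needed for this inequality — only the first-column order imposed by $\sigma$ and the weak decrease of rows — although it is of course present in the background through the hypothesis that $U$ is a genuine SPCT.

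Finally, having shown $T \in \SRT(\alpha)$, the construction returns $\tab_T = U$ by reversing the reversal, and $U$ is by assumption an SPCT of type $\sigma$, whence $\Phi^\sigma_\alpha(T) = \tab_T = U$. Therefore every basis element of $\bfS^\sigma_{\alpha^\rmc \cdot \sigma}$ lies in the image, and $\Phi^\sigma_\alpha$ is surjective. The only genuinely non-bookkeeping point is the displayed inequality, so I would organize the write-up around it after disposing of the well-definedness and column-length accounting.
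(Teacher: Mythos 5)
Your proof is correct and follows essentially the same route as the paper's: both invert the recipe for $\tab_T$ by installing each row of a given SPCT, reversed into increasing order, as the corresponding column of $\trd(\alpha)$, and both establish the row-increasing condition of $T$ via the chain (rightmost entry of row $a$) $\le$ (leftmost entry of row $a$) $<$ (leftmost entry of row $b$), which uses only the weak decrease along rows and the first-column standardization being $\sigma$. Your explicit reduction of row-increase to the adjacencies ``top of column $j$ immediately left of bottom of column $j+1$'' is simply a more detailed account of the step the paper dismisses as obvious, so there is nothing to correct.
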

\begin{proof}
We begin with considering the ribbon diagram such that the size of the $i$th column is given by that of the $\sigma^{-1}(i)$th row of $\tcd(\alpha^{\rm c} \cdot \sigma)$. It is straightforward to check that this is the ribbon diagram of $\alpha$.

Let $\tau \in \SPCT^{\sigma}(\alpha^{\rm c} \cdot \sigma)$.
Define $T$ to be the filling of $\trd(\alpha)$ whose
$i$th column is filled with the entries in the $\sigma^{-1}(i)$th row of $\tab$ in increasing order for $i = 1, 2, \ldots, \ell(\alpha^{\rm c})$.
We claim that $T$ is a standard ribbon tableau of shape $\alpha$.
Note that the entries in each column of $T$ increase from top to bottom 
since those in each row of $\tau$ decrease from left to right.
To show that the entries in each row of $T$ increase from left to right,
we have to show that the entry in the uppermost box of the $i$th column of $T$
is less than that in the lowermost box of the $(i+1)$st column of $T$ for all $i$.
By the construction of $T$,
it is obvious since the entry in the lowermost box of the $i$th column of $T$
is less than that in the lowermost box of the $(i+1)$st column of $T$ for all $i$.

For our assertion, it suffices to show that $\Phi^\sigma_\alpha(T) = \tau$,
which is straightforward from the construction of $T$.
\end{proof}

Under $\Phi^\sigma_{\alpha}$,
the set $\{T \in \SRT(\alpha) \mid \Phi^\sigma_{\alpha}(T) \neq 0\}$ corresponds bijectively
to $\SPCT^\sigma_{\alpha^{\rm c} \cdot \sigma}$.
In particular, as seen in the proof of Lemma~\ref{Lem:Surjective}
the source tableau $T_0$ of $\bfP_\alpha$ maps to 
the canonical source tableau $\tauC$ of shape $\alpha^{\rm c} \cdot \sigma$ and type $\sigma$.

\begin{lemma}\label{Lem:0-Hecke homo}
The map ${\bf pr}_C \circ \Phi^\sigma_\alpha: \overline{\theta[\bfP_\alpha]} \to \bfS^\sigma_{\alpha^{\rm c} \cdot \sigma,C}$
is an $H_n(0)$-module homomorphism,
where ${\bf pr}_C: \bfS^\sigma_{\alpha^{\rm c} \cdot \sigma} \to \bfS^\sigma_{\alpha^{\rm c} \cdot \sigma,C}$ is the projection.
\end{lemma}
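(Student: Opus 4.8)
The plan is to verify the intertwining identity
$({\bf pr}_C \circ \Phi^\sigma_\alpha)(\hpi_i \star T) = \hpi_i \cdot ({\bf pr}_C \circ \Phi^\sigma_\alpha)(T)$
for every generator $\hpi_i$ ($1 \le i \le n-1$) and every $T \in \SRT(\alpha)$; since these $T$ span $\overline{\hThbfP}$ and both $\Phi^\sigma_\alpha$ and ${\bf pr}_C$ are $\C$-linear, this suffices. Because ${\bf pr}_C$ is the projection onto a direct summand in~\eqref{eq: decomp by eq rel}, it is itself an $H_n(0)$-module map, so the right-hand side equals ${\bf pr}_C(\hpi_i \cdot \Phi^\sigma_\alpha(T))$. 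Hence it is enough to prove ${\bf pr}_C(\Phi^\sigma_\alpha(\hpi_i \star T)) = {\bf pr}_C(\hpi_i \cdot \Phi^\sigma_\alpha(T))$, and I would organize the verification according to the relative position of $i$ and $i+1$ in $T$, which governs the factor $\hpi_i \star T \in \{T, 0, s_i\cdot T\}$.

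The combinatorial core is a dictionary translating the $\star$-action data in $T$ into the SPCT-action data~\eqref{eq: Hecke action on SPCT} in $\tab_T$, valid whenever $\tab_T$ is an SPCT (the nonzero case of $\Phi^\sigma_\alpha$). Reading positions through the defining rule of $\tab_T$ — the $\sigma(i)$th column of $T$ becomes the $i$th row of $\tab_T$, so the column of an entry in $T$ fixes (via $\sigma^{-1}$) its row in $\tab_T$, while its height from the bottom of that column equals its column index in $\tab_T$ — I would show: $i$ lies in a higher row of $T$ than $i+1$ iff $i \notin \Des(\tab_T)$; $i,i+1$ lie in the same row of $T$ iff $i \in \Des(\tab_T)$ with $i,i+1$ attacking; and $i$ lies in a lower row than $i+1$ iff $i \in \Des(\tab_T)$ with $i,i+1$ nonattacking. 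The triple condition on $\tab_T$ is what makes these equivalences clean. Granting the dictionary, the three cases match term by term: in the first, $\hpi_i \star T = T$ and $\hpi_i \cdot \tab_T = \tab_T$; in the second, both sides vanish; in the third, $\hpi_i \star T = s_i\cdot T$, and since no value lies strictly between $i$ and $i+1$, the rearrangement $T \mapsto \tab_T$ commutes with transposing $i$ and $i+1$, giving $\Phi^\sigma_\alpha(s_i\cdot T) = \tab_{s_i\cdot T} = s_i\cdot\tab_T = \hpi_i\cdot\tab_T$, where Theorem~\ref{thm: Hecke action on SPCT} guarantees $s_i\cdot\tab_T$ is again an SPCT of the same shape and type. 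Thus $\Phi^\sigma_\alpha(\hpi_i\star T) = \hpi_i\cdot\Phi^\sigma_\alpha(T)$ holds exactly whenever $\Phi^\sigma_\alpha(T)\ne 0$, and projecting settles this case.

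The remaining and most delicate case is $\Phi^\sigma_\alpha(T) = 0$, where $\tab_T$ fails the triple condition; here the right-hand side is $\hpi_i\cdot 0 = 0$, so I must show ${\bf pr}_C(\Phi^\sigma_\alpha(\hpi_i\star T)) = 0$. This is automatic unless $\hpi_i\star T = s_i\cdot T$ with $\tab_{s_i\cdot T}$ a genuine SPCT, and then the task is to prove that $\tab_{s_i\cdot T}$ lies in a component $E \ne C$, so its projection to the canonical summand vanishes. I would argue this using the characterization of the canonical class $C$ — the class of $\tauC = \Phi^\sigma_\alpha(T_0)$ — together with the invariance of the standardized column word along $\sim_\alpha$ and the uniqueness of the source tableau in each class (Lemma~\ref{lem: unique source and sink}); the idea is that an SPCT arising as $\tab_{s_i\cdot T}$ from a $T$ whose $\tab_T$ is non-admissible cannot carry the standardized column data of $\tauC$, and so cannot belong to $C$.

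The main obstacle is precisely this last off-component argument: ruling out that the ``repaired'' tableau $\tab_{s_i\cdot T}$ slips into the canonical class. Establishing the position dictionary is laborious but routine once the height-from-bottom versus column-index correspondence is set up; by contrast, controlling where the non-admissible fibers of $\Phi^\sigma_\alpha$ are sent genuinely uses the structure of $C$, and this is exactly why the projection ${\bf pr}_C$ is indispensable — without it, $\Phi^\sigma_\alpha$ itself need not be an $H_n(0)$-module homomorphism.
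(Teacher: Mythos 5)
Your skeleton (generator-by-generator verification, case analysis on the three values of $\hpi_i \star T$) is the same as the paper's, but the combinatorial core of your argument --- the ``dictionary'' asserting that the relative position of $i$ and $i+1$ in $T$ determines the descent/attacking data of $\tab_T$ \emph{whenever $\tab_T$ is an SPCT} --- is false, and with it your key claim that $\Phi^\sigma_\alpha(\hpi_i \star T) = \hpi_i \cdot \Phi^\sigma_\alpha(T)$ holds exactly when $\Phi^\sigma_\alpha(T) \neq 0$. Take $\alpha = (1,2,1)$ and $\sigma = \id \in \SG_2$, so $\alpha^{\rm c}\cdot \sigma = (2,2)$, and let
\[
T \; = \;
\begin{array}{c}
\begin{ytableau}
\none & 1 \\
2 & 4 \\
3
\end{ytableau}
\end{array}
\;\in\; \SRT(\alpha),
\qquad\text{so}\qquad
\tab_T \; = \;
\begin{array}{c}
\begin{ytableau}
3 & 2 \\
4 & 1
\end{ytableau}
\end{array}.
\]
Then $\tab_T$ is an SPCT of type $\id$ (rows decrease, the first column $3,4$ standardizes to $\id$, and the unique triple to check satisfies $\tab_{1,2} = 2 > 1 = \tab_{2,2}$), so $\Phi^\sigma_\alpha(T) = \tab_T \neq 0$. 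In $T$ the entry $1$ lies in a higher row than $2$, whence $\hpi_1 \star T = T$; your dictionary would force $1 \notin \Des(\tab_T)$, but in fact $1$ and $2$ both lie in the second column of $\tab_T$, so $1$ is an \emph{attacking descent} and $\hpi_1 \cdot \tab_T = 0$. Hence $\Phi^\sigma_\alpha(\hpi_1 \star T) = \tab_T \neq 0 = \hpi_1 \cdot \Phi^\sigma_\alpha(T)$: the intertwining fails precisely on the locus where you claim the matching is ``term by term.'' (The lemma itself is not contradicted: the second column word of $\tab_T$ standardizes to $21$ while that of $\tauC$ standardizes to $12$, so $\tab_T \notin C$ and both sides vanish after applying ${\bf pr}_C$.)

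The missing idea is that the hypothesis needed for any such dictionary is not ``$\tab_T$ is an SPCT of type $\sigma$'' but ``$\tab_T \in C$'': membership in the canonical class forces, for every height $k$, the entries at height $k$ of the columns of $T$ to increase with the column index, and it is exactly this property --- not the triple condition --- that converts row comparisons in $T$ into descent/attacking statements about $\tab_T$. This is why every case of the paper's proof derives its contradictions from the assumption $\tab_T \in C$, and accordingly your case split should be on ${\bf pr}_C(\Phi^\sigma_\alpha(T)) = 0$ versus $\neq 0$, not on $\Phi^\sigma_\alpha(T) = 0$ versus $\neq 0$; type-$\sigma$ SPCTs outside $C$, as above, fall on the wrong side of your split. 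Moreover, even on $C$ your third case is not clean: with the same $\alpha$ and $\sigma$, the preimage $T'$ of $\tauC$ (columns $1,2$ and $3,4$ read downward) satisfies $\hpi_2 \star T' = s_2 \cdot T'$, yet $2$ is an \emph{attacking} descent of $\tauC$, so your appeal to Theorem~\ref{thm: Hecke action on SPCT} to conclude $\hpi_2 \cdot \tauC = s_2 \cdot \tauC$ breaks down; the paper handles this by a further sub-analysis showing that $\tab_{s_i \cdot T}$ is then either not an SPCT or not in $C$. The projection ${\bf pr}_C$ is thus indispensable throughout the argument, not only for the ``non-admissible fibers.''
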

\begin{proof}
We simply write $\hPhi^\sigma_{\alpha}$ for ${\bf pr}_C \circ \Phi^\sigma_\alpha$.
To show our assertion it suffices to show that  
$$
\hPhi^\sigma_{\alpha}(\pi_i \star T) = \pi_i \cdot \hPhi^\sigma_{\alpha}(T)
$$ 
for all $i = 1, 2, \ldots, n-1$ and $T \in \SRT(\alpha)$.
For $i = 1, 2, \ldots, n$, 
let $c_i$ be the index of column of $T$ containing $i$.
Suppose that $i$ appears at the $r_i$th box from the bottom of the $c_i$th column of $T$.

\begin{displaymath}
\begin{tikzpicture}
\def \hhh{4mm}
\def \vvv{4.5mm}
\draw (-\hhh*2,\vvv*0) -- (\hhh*1,\vvv*0) -- (\hhh*1,\vvv*5) -- (\hhh*3,\vvv*5);
\draw (-\hhh*2,\vvv*1) -- (\hhh*0,\vvv*1) -- (\hhh*0,\vvv*6) -- (\hhh*3,\vvv*6);
\draw (-\hhh*0,\vvv*3) -- (\hhh*1,\vvv*3);
\draw (-\hhh*0,\vvv*4) -- (\hhh*1,\vvv*4);
\node at (\hhh*0.5,\vvv*3.5) {\small $i$};
\node at (\hhh*0.7,-\vvv*0.5) {\tiny column $c_i$};
\node at (-\hhh*2.8,\vvv*0.4) {\reflectbox{$\ddots$}};
\node at (\hhh*3.9,\vvv*6) {\reflectbox{$\ddots$}};
\node at (\hhh*0.5,\vvv*2) {$\vdots$};
\node[left] at (-\hhh*4,\vvv*3.5) {\small $T =$};
\draw[decoration={brace,raise=3pt},decorate,line width=1pt] (\hhh*1.0,\vvv*4) -- (\hhh*1.0,\vvv*0.15)
node[right, midway,xshift = 3pt] {\footnotesize $r_i$};
\end{tikzpicture}
\end{displaymath}

{\it Case 1: $\pi_i \star T = T$.} 
If $\hPhi^\sigma_\alpha(T) = 0$, then there is nothing to prove. 
Suppose that $\hPhi^\sigma_\alpha(T) \neq 0$, that is, $\tau_T \in C$.
We claim that $i \notin \Des(\tau_T)$.

Observe that the condition $\pi_i \star T = T$ is equivalent to
\begin{equation*}
c_i \ge c_{i+1}.
\end{equation*}
If $c_{i} > c_{i+1}$, then the entry at $(\sigma^{-1}(c_i),1)$ is greater than the entry at $(\sigma^{-1}(c_{i+1}),1)$ in $\tab_T$.
Suppose that $i$ appears in the $k$th column of $\tau_T$.
If $i+1$ lies to weakly right of $i$ in $\tau_T$,
then the entry at $(\sigma^{-1}(c_i), k)$ is less than the entry at $(\sigma^{-1}(c_{i+1}), k)$.
This contradicts the assumption that $\tau_T \in C$.
Hence $i+1$ lies to strictly left of $i$ in $\tau_T$.
If $c_{i} = c_{i+1}$, then $i$ and $i+1$ appear in the same row in $\tab_T$. In both cases, $i \notin \Des(\tab_T)$, as required.

{\it Case 2: $\pi_i \star T = 0$.}
If $\hPhisa(T) = 0$, then there is nothing to prove.
Suppose that $\hPhisa(T) \neq 0$, that is, $\tau_T \in C$.
The condition $\pi_i \star T = 0$ implies that $i$ and $i+1$ are in the same row of $T$, thus $c_i + 1 = c_{i+1}$.
By construction, 
$i$ appears at the rightmost box in the $\sigma^{-1}(c_i)$th row 
and $i+1$ at the leftmost box in the $\sigma^{-1}(c_{i+1})$th row in $\tau_T$. 
We will show that $r_i=1$. 
Otherwise, the entry at $(\sigma^{-1}(c_i), 1)$ is greater than $i$.
Since $(\sigma^{-1}(c_{i+1}), 1)$ is filled with $i+1$,
we conclude that the entry of $(\sigma^{-1}(c_i), 1)$ is greater than that of $(\sigma^{-1}(c_{i+1}), 1)$.
This contradicts the assumption that $\tau_T \in C$.

As a consequence, $i$ and $i+1$ lie in the first column of $\tau_T$. 
This says that $i$ is an attacking descent of $\tab_T$, so $\pi_i \cdot \hPhisa(T) = 0$.
\smallskip

{\it Case 3: $\pi_i \star T = s_i \cdot T$.}
First, consider the case where $\hPhisa(T) = 0$.
Suppose that $\hPhisa(\pi_i \star T) \neq 0$, 
equivalently $\tau_{s_i \cdot T} \in C$.
For $j = i, i+1$,
let $c'_j$ be the index of column of $s_j \cdot T$ containing $j$.
Suppose that $j$ appears at the $r'_j$th box from the bottom of the $c'_j$th column of $s_j \cdot T$.
From the assumption $\pi_i \star T = s_i \cdot T$ 
we know that $i$ is strictly right of $i+1$ in $s_i \cdot T$,
that is, $c'_{i} > c'_{i+1}$.
Moreover, from the assumption $\tau_{s_i \cdot T} \in C$
we know that the entry at $(\sigma^{-1}(c'_{i}), k)$ is greater than that at $(\sigma^{-1}(c'_{i+1}), k)$ for all $k$ in $\tau_{s_i \cdot T}$,
which implies $r'_{i+1} < r'_{i}$.

\begin{enumerate}[label = {\rm (\roman*)}]
\item $\sigma^{-1}(c'_i) > \sigma^{-1}(c'_{i+1})$.
Since $i$ is in a lower row than  $i+1$ in $\tau_{s_i \cdot T}$,
from the triple condition we know that 
the column containing $i$ is not adjacent to that containing $i+1$ in $\tau_{s_i \cdot T}$, so $r'_{i+1}+1 < r'_{i}$.
Then $s_i \cdot \tau_{s_i \cdot T} \in C$.
But this is absurd since $s_i \cdot \tau_{s_i \cdot T}= \tau_T \notin C$.

\item $\sigma^{-1}(c'_i) < \sigma^{-1}(c'_{i+1})$. 
Since $i$ is in a higher row than  $i+1$ in $\tau_{s_i \cdot T}$ and $r'_{i+1} < r'_{i}$, we see that $s_i \cdot \tau_{s_i \cdot T} \in C$.
But this is absurd since $s_i \cdot \tau_{s_i \cdot T}= \tau_T \notin C$.
\end{enumerate}
From {\rm (i)} and ${\rm (ii)}$ we conclude that $\hPhisa(\pi_i \star T) = 0$.

Next, consider the case where $\hPhisa(T) \neq 0$.
From the assumption $\pi_i \star T = s_i \cdot T$ we know that $c_i < c_{i+1}$.
Since $\tau_{T} \in C$,  
the entry at $(\sigma^{-1}(c_{i}), k)$ is less than that at $(\sigma^{-1}(c_{i+1}), k)$ for all $k$ in $\tau_T$. 
Thus $r_{i}  \leq  r_{i+1}$ and $i$ is a descent of $\tau_T$.

\begin{enumerate}[label = {\rm (\roman*)}]
\item $\sigma^{-1}(c_i)<\sigma^{-1}(c_{i+1})$. 
Note that $i$ is in a higher row than  $i+1$ in $\tau_T$.
If $r_{i} = r_{i+1}$, then $i$ is an attacking descent of $\tau_T$, so $\pi_i \cdot \hPhisa(T) = 0$.
In addition, $\hPhisa(\pi_i \star T) = 0$ since $\tau_{s_i \cdot T} \notin C$.
If $r_{i}+1= r_{i+1}$, then $i$ is also an attacking descent of $\tau_T$, so $\pi_i \cdot \hPhisa(T) = 0$.
In addition, $\hPhisa(\pi_i \star T) = 0$ since $\tau_{s_i \cdot T}$ is not an SPCT.
If $r_{i} +1 < r_{i+1}$, then $i$ is a nonattacking descent of $\tau_T$, 
so $\pi_i \cdot \hPhisa(T) = s_i \cdot \hPhisa(T) = \hPhisa(s_i \cdot T) =
\hPhisa(\pi_i \star T)$.

\item $\sigma^{-1}(c_i)>\sigma^{-1}(c_{i+1})$.
Note that $i$ is in a lower row than $i+1$ in $\tau_T$.
If $r_{i} = r_{i+1}$, then $i$ is an attacking descent of $\tau_T$, so $\pi_i \cdot \hPhisa(T) = 0$.
In addition, $\hPhisa(\pi_i \star T) = 0$ since $\tau_{s_i \cdot T} \notin C$.
If $r_{i} < r_{i+1}$, then $i$ is a nonattacking descent of $\tau_T$,
so $\pi_i \cdot \hPhisa(T) = s_i \cdot \hPhisa(T) = \hPhisa(s_i \cdot T) =
\hPhisa(\pi_i \star T)$.
\end{enumerate}
From {\rm (i)} and ${\rm (ii)}$ we conclude that 
$\hPhisa(\pi_i \star T) = \pi_i \cdot \hPhisa(T)$.
\end{proof}

\begin{remark}\label{description of kernel}
An explicit description of the kernel of ${\bf pr}_C \circ \Phi^\sigma_\alpha$ can be given.
For a standard ribbon tableau $T$, let $T_{i, j}$ denote the entry at the box in $T$ located in the $j$th column and the $i$th row from the bottom of the $j$th column.
Define $\Omega_\alpha^\sigma$ to be the set of standard ribbon tableaux $T \in \SRT(\alpha)$ satisfying that either
\begin{enumerate}[label = {\bf K\arabic*.}]
  \item there exists a triple $(i,j,k)$ such that $i < j$ and $T_{k, i} > T_{k, j}$, or
  \item there exists a triple $(i,j,k)$ such that $i < j$, $\sigma^{-1}(i) < \sigma^{-1}(j)$, and $T_{k, i} > T_{k+1, j}$.
\end{enumerate}
Note that $T_{k,i} = (\tau_T)_{\sigma^{-1}(i), k}$.
Hence, for the standardization of the first column of $\tau_T$ to be $\sigma$,
$T_{1,i}$ should be less than $T_{1,j}$ whenever $i <j$.
Going further, 
for $\tau_T$ to be in $C$,
$T_{k,i}$ should be less than $T_{k,j}$ for all $k$ whenever $i <j$.
If this condition is satisfied, then
verifying that every triple in $\tau_T$ obeys the triple condition
can be done by showing that $T_{k,i} < T_{k+1,j}$ for all $k$ whenever $i<j$ and $\sigma^{-1}(i) < \sigma^{-1}(j)$.
From the above discussion it follows that $\ker({\bf pr}_C \circ \Phi^\sigma_\alpha)$ is equal to the $\mathbb{C}$-span of $\Omega_\alpha^\sigma$.
\end{remark}

We now state the main result in this section. 
\begin{theorem}\label{Thm:main Sec5}
Let $\alpha$ be a composition.
For any $\sigma \in \SG_{\ell(\alpha)}$, 
$\bfS^{\sigma}_{\alpha,C}$ is a homomorphic image of $\calP_{\set(\alpha \cdot \sigma^{-1})}$.
In particular, 
if $\alpha$ is a partition and $w_0$ the longest element in $\SG_{\ell(\alpha)}$,
then $\bfS^{w_0}_\alpha$ is a homomorphic image of $\calP_{\set(\alpha^\rev)}$.
\end{theorem}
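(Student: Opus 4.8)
The plan is to obtain the asserted surjection by specializing the two lemmas of this section to a suitably reindexed base composition, and then to recognize the source module as the named projective indecomposable. Concretely, I would set $\gamma := (\alpha \cdot \sigma^{-1})^{\rmc}$, so that $\gamma^{\rmc} = \alpha \cdot \sigma^{-1}$, hence $\gamma^{\rmc} \cdot \sigma = \alpha$, and $\ell(\gamma^{\rmc}) = \ell(\alpha)$, so that $\sigma \in \SG_{\ell(\gamma^{\rmc})}$ and the constructions of this section apply with $\gamma$ in the role of the base composition. Lemma~\ref{Lem:Surjective} then gives that $\Phi^\sigma_\gamma$ maps $\overline{\theta[\bfP_\gamma]}$ onto $\bfS^\sigma_{\gamma^{\rmc}\cdot\sigma} = \bfSsa$; composing with the projection ${\bf pr}_C$ produces a map onto the canonical summand $\bfSsaC$, which by Lemma~\ref{Lem:0-Hecke homo} is an $H_n(0)$-module homomorphism and remains surjective. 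Thus $\bfSsaC$ is a homomorphic image of $\overline{\theta[\bfP_\gamma]}$.

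It then remains to identify $\overline{\theta[\bfP_\gamma]}$ with $\calP_{\set(\alpha\cdot\sigma^{-1})}$. The isomorphism $\iota$ of~\eqref{Eq:Isom1} gives $\overline{\theta[\bfP_\gamma]} \cong \theta[\bfP_\gamma]$, and Theorem~\ref{Thm22} together with functoriality of the twist yields $\theta[\bfP_\gamma] \cong \theta[\calP_{\set(\gamma)}]$. I would then establish the general fact that twisting a PIM by the Fayers automorphism complements its index set, namely $\theta[\calP_I] \cong \calP_{I^{\rmc}}$ for $I \subseteq [n-1]$: since $\theta$ is an algebra automorphism, $\theta[\calP_I]$ is again projective indecomposable and its top is $\theta[\rmtop(\calP_I)] = \theta[\bfF_\beta]$ with $\set(\beta) = I$, while a direct check of the twisted action $\hpi_i * v = (1-\hpi_i)v$ on $\bfF_\beta$ shows $\theta[\bfF_\beta] \cong \bfF_{\beta^{\rmc}}$, whose associated subset is $I^{\rmc}$. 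Applying this with $I = \set(\gamma)$ gives $\theta[\calP_{\set(\gamma)}] \cong \calP_{\set(\gamma)^{\rmc}} = \calP_{\set(\gamma^{\rmc})} = \calP_{\set(\alpha\cdot\sigma^{-1})}$, and chaining these isomorphisms with the surjection of the previous paragraph completes the general statement. The main obstacle is precisely this identification: the genuinely surjective and $H_n(0)$-linear content is already packaged in Lemmas~\ref{Lem:Surjective} and~\ref{Lem:0-Hecke homo}, so the delicate point is to track the complementation and the reordering $\gamma \mapsto \gamma^{\rmc}\cdot\sigma$ carefully enough that the index set lands exactly on $\set(\alpha\cdot\sigma^{-1})$ rather than on some other composition.

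For the ``in particular'' statement I would specialize to $\alpha$ a partition and $\sigma = w_0$. By Corollary~\ref{Coro318}, $\bfS^{w_0}_\alpha$ is SPCT-cyclic, hence indecomposable, so by the decomposition~\eqref{eq: decomp by eq rel} into indecomposables (Theorem~\ref{thm: eq class indecomp}) it has a single connected component; as the canonical source tableau always belongs to $\SPCT^{w_0}(\alpha)$, that unique component is the canonical class $C$ and $\bfS^{w_0}_\alpha = \bfS^{w_0}_{\alpha,C}$. Since $w_0^{-1} = w_0$ and $\alpha \cdot w_0 = \alpha^\rev$, we have $\set(\alpha\cdot w_0^{-1}) = \set(\alpha^\rev)$, so the general case yields that $\bfS^{w_0}_\alpha$ is a homomorphic image of $\calP_{\set(\alpha^\rev)}$, as claimed.
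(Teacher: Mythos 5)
Your proposal is correct and follows essentially the same route as the paper: the surjection onto $\bfSsaC$ is obtained from Lemmas~\ref{Lem:Surjective} and~\ref{Lem:0-Hecke homo} applied with $(\alpha\cdot\sigma^{-1})^{\rmc}$ as the ribbon shape, and the source module is identified with $\calP_{\set(\alpha\cdot\sigma^{-1})}$ via the same chain of isomorphisms through $\theta[\bfP_{(\alpha\cdot\sigma^{-1})^{\rmc}}]$, with the ``in particular'' case handled exactly as in the paper via Corollary~\ref{Coro318} and $\alpha\cdot w_0=\alpha^{\rev}$. The only deviation is that where the paper cites \cite[Proposition 5.1]{16Huang} for the complementation $\theta[\calP_I]\cong\calP_{I^{\rmc}}$, you re-derive it from the classification of PIMs by their tops together with the direct computation $\theta[\bfF_\beta]\cong\bfF_{\beta^{\rmc}}$, which is a valid self-contained substitute.
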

\begin{proof}
Note that  
\begin{displaymath}
\calP_{\set(\alpha\cdot \sigma^{-1})} 
\underset{\text{\cite[Proposition 5.1]{16Huang}}}{\cong}  
\theta[\calP_{\set(\alpha\cdot \sigma^{-1})^{\rm c}}] 
\underset{\text{Theorem~\ref{Thm22}}}{\cong} 
\theta[\bfP_{(\alpha \cdot \sigma^{-1})^{\rm c}}]
\underset{ \text{\eqref{Eq:Isom1}}}{\cong}
\overline{\theta[\bfP_{(\alpha \cdot \sigma^{-1})^{\rm c}}]}.
\end{displaymath}
Now the first assertion follows from 
Lemma~\ref{Lem:Surjective} together with Lemma~\ref{Lem:0-Hecke homo}.
The second assertion follows from Corollary~\ref{Coro318}
and the equality $\alpha\cdot w_0 = \alpha^\rev$.
\end{proof}

\begin{corollary}\label{projective cover is projective}
Let $\alpha$ be a composition and $\sigma \in \SG_{\ell(\alpha)}$. Then 
$\bfS^{\sigma}_{\alpha,C}$ is projective if and only if 
either $\alpha = (1,\ldots,1)$ or 
$\alpha = (k,1,\ldots,1)$ for some $k \ge 2$ and $\sigma(1)=\ell(\alpha)$.
In this case, $\alpha$ is $\sigma$-simple and $\bfS^{\sigma}_{\alpha} \cong \calP_{\set(\alpha^\rmr)}$.
\end{corollary}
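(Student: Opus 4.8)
The plan is to turn the statement into a question about the single surjection built in Theorem~\ref{Thm:main Sec5}. Write $\beta := (\alpha\cdot\sigma^{-1})^{\rm c}$, $P := \calP_{\set(\alpha\cdot\sigma^{-1})}$, and let $\hPhi^\sigma_\alpha : P \to \bfSsaC$ be that surjection (the map ${\bf pr}_C\circ\Phi^\sigma_{\beta}$ transported along the isomorphisms in the proof of Theorem~\ref{Thm:main Sec5}). The first step is a purely representation-theoretic reduction: I claim $\bfSsaC$ is projective if and only if $\hPhi^\sigma_\alpha$ is an isomorphism. Indeed, $\bfSsaC$ is indecomposable by Theorem~\ref{thm: eq class indecomp}, so if it is projective it is a PIM; factoring the composite $P\to\bfSsaC\to\rmtop(\bfSsaC)$ through $\rmtop(P)$ gives a surjection $\rmtop(P)\twoheadrightarrow\rmtop(\bfSsaC)$ of simple modules, whence $\rmtop(\bfSsaC)\cong\rmtop(P)\cong\bfF_{\alpha\cdot\sigma^{-1}}$. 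Since a PIM is determined by its top, $\bfSsaC\cong P$, and a surjection between modules of equal finite dimension is an isomorphism, so $\ker\hPhi^\sigma_\alpha=0$. The converse is immediate. By Remark~\ref{description of kernel} this reduces the whole corollary to the combinatorial condition $\Omega^\sigma_{\beta}=\emptyset$.

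Next I would unravel $\Omega^\sigma_\beta=\emptyset$ on the SPCT side. The map $T\mapsto\tau_T$ restricts to a bijection between $\{T\in\SRT(\beta):\Phi^\sigma_\beta(T)\neq0\}$ and $\SPCTsa$ (the remark just before Lemma~\ref{Lem:Surjective}), and ${\bf pr}_C$ annihilates every class other than $C$. Hence $\Omega^\sigma_\beta=\emptyset$ is equivalent to the conjunction of: (i) $\Phi^\sigma_\beta$ is injective, i.e. $\tau_T$ is a genuine SPCT for every $T\in\SRT(\beta)$ (no \textbf{K2} violation); and (ii) $\SPCTsa=C$, i.e. there is a single equivalence class, which by Theorem~\ref{thm: SPCT-cyclic} says precisely that $\alpha$ is $\sigma$-simple. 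In dimension terms this is the chain $\dim\bfSsaC=|C|\le|\SPCTsa|\le|\SRT(\beta)|=\dim P$ collapsing to equalities.

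The heart of the argument, and the step I expect to be the main obstacle, is the combinatorial characterization: (i) and (ii) hold together if and only if $\alpha=(1,\ldots,1)$, or $\alpha=(k,1,\ldots,1)$ with $k\ge2$ and $\sigma(1)=\ell(\alpha)$. For the ``if'' direction I would check, for each such $\alpha$, that $\beta$ is a single row (when $\alpha=(1,\ldots,1)$, $\beta=(n)$) or a hook ribbon (when $\alpha=(k,1,\ldots,1)$, $\beta=(\ell(\alpha),1^{k-1})$), that every standard filling $T$ of $\beta$ yields a valid $\tau_T$ in the canonical class, and that these $\alpha$ carry no PACD pair hence are $\sigma$-simple; the hypothesis $\sigma(1)=\ell(\alpha)$ places the unique long row at the bottom of the first-column order, so the triple condition can never fail. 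For the ``only if'' direction I would argue contrapositively: assuming $\alpha$ is $\sigma$-simple but not of the listed shape (two parts $\ge2$, or one part $\ge2$ in a position other than a lone first row sitting at the bottom of the $\sigma$-order), I construct a standard ribbon tableau $T\in\SRT(\beta)$ whose filling $\tau_T$ breaks condition~(3) of Definition~\ref{def: PCT}. Tracking how columns of $\beta$ become rows of $\tau_T$ under the reflection built into $\Phi^\sigma_\beta$ is where the genuine work lies.

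Finally, for the closing clause, once $\alpha$ is pinned to one of the two shapes, $\sigma$-simplicity follows at once (neither shape admits a PACD pair, as at most one part exceeds $1$), so $\bfSsa=\bfSsaC$ by Theorem~\ref{thm: SPCT-cyclic}. A short computation with $(\alpha\cdot\sigma^{-1})_j=\alpha_{\sigma^{-1}(j)}$ gives $\alpha\cdot\sigma^{-1}=\alpha^{\rmr}$ in both cases (the hypothesis $\sigma(1)=\ell(\alpha)$ sends the long part to the last slot), so the isomorphism of the first step reads $\bfSsa\cong\calP_{\set(\alpha^{\rmr})}$, which finishes the proof.
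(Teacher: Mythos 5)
Your overall architecture coincides with the paper's: reduce projectivity of $\bfSsaC$ to the vanishing of $\ker(\mathbf{pr}_C\circ\Phi^\sigma_\beta)$ for $\beta=(\alpha\cdot\sigma^{-1})^{\rm c}$, then settle that vanishing combinatorially. Your representation-theoretic step is correct and in fact more carefully justified than in the paper, which merely asserts the equivalence ``projective $\Leftrightarrow$ kernel zero''; your argument via tops of PIMs is the right justification. Likewise, splitting kernel vanishing into (i) injectivity of $\Phi^\sigma_\beta$ and (ii) $\SPCTsa=C$, and identifying (ii) with $\sigma$-simplicity through Theorem~\ref{thm: SPCT-cyclic}, is sound, and your closing computation $\alpha\cdot\sigma^{-1}=\alpha^{\rmr}$ is correct.

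The genuine gap is exactly the step you flag as ``where the genuine work lies'': the ``only if'' half of the combinatorial characterization is announced (``I construct a standard ribbon tableau \dots whose filling breaks condition (3)'') but never carried out, and this is the mathematical core of the corollary --- without it neither direction of the stated equivalence is established. The paper closes this with a single witness rather than the case analysis you anticipate: let $T_\beta\in\SRT(\beta)$ be the filling of $\trd(\beta)$ by $1,2,\ldots,|\beta|$ in reading order (left to right, top to bottom). If the kernel vanishes, then $\mathbf{pr}_C\circ\Phi^\sigma_\beta(T_\beta)\neq 0$; comparing the bottom entries of the first and the last columns of $T_\beta$ shows this forces $\beta$ to be a hook (for any non-hook ribbon the bottom of column $1$ is filled after, hence is larger than, the bottom of the rightmost column, a violation of condition {\bf K1} of Remark~\ref{description of kernel}), and if $\beta$ is a hook but $\sigma(1)\neq\ell(\alpha)$, then $\tau_{T_\beta}$ violates the triple condition, since the first row of $\tau_{T_\beta}$ is a single box whose entry exceeds the second entry of the long row. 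Note also that your plan commits to producing a violation of condition (3) of Definition~\ref{def: PCT} only; a kernel element can equally arise from a {\bf K1} failure (wrong type, or an SPCT outside $C$), as happens for the reading-order tableau when $\beta$ is not a hook, so even your intended construction would have to account for both failure modes. Once a witness argument of this kind is supplied, the rest of your outline does complete the proof.
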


\begin{proof}
In case where $\alpha = (1,1,\ldots,1)$, the assertion clearly holds since $|\SPCTsa| = 1 = |\SRT((\alpha \cdot \sigma^{-1})^{\rm c})|$.
In the following, we assume that $\alpha \neq (1,1,\ldots,1)$. 

For a composition $\beta$, we claim that $\ker({\bf pr}_C \circ \Phi^\sigma_\beta) = 0$ if and only if $\sigma^{-1}(\ell(\beta))=1$ and $\beta$ is a partition of hook shape. 
The ``if'' part is obvious from the definition of ${\bf pr}_C \circ \Phi^\sigma_\beta$.
To prove the ``only if'' part,
we define $T_\beta \in \SRT(\beta)$ to be the standard ribbon tableau obtained by filling $\trd(\beta)$ with the entries $1,2, \ldots, |\beta|$ from left to right and from top to bottom.
Since ${\bf pr}_C \circ \Phi^\sigma_\beta(T_\beta)$ is nonzero,
$(T_\beta)_{1,1}$ is less than $(T_\beta)_{1, \ell}$ where $\ell$ is the index of the rightmost column of $T_\beta$.
Hence $\beta$ is a partition of hook shape.
If $\sigma^{-1}(\ell(\beta)) \neq 1$, then
$\tau_{T_\beta}$ is not an SPCT since the entry at $(1, 1)$ is greater than the entry at $(\sigma^{-1}(\ell(\beta)), 2)$ and $(1,2) \not\in \tcd(\beta^c \cdot \sigma)$.
This verifies our claim.

Note that ${\bfS}^\sigma_{\alpha,C}$ is projective if and only if $\ker({\bf pr}_C \circ \Phi^\sigma_\beta) = 0$ with $\beta = (\alpha \cdot \sigma^{-1})^{\rm c}$. Thus the first assertion follows from the claim.
If $\alpha = (k, 1, \ldots, 1)$ for some $k$ and $\sigma^{-1}(\ell(\alpha))=1$, then 
$\alpha$ is clearly $\sigma$-simple.
Hence the second assertion follows from Theorem~\ref{Thm:main Sec5} since
\[
{\bfS}^\sigma_{\alpha} = {\bfS}^\sigma_{\alpha, C}  \cong  
\overline{\theta[\bfP_{(\alpha \cdot \sigma^{-1})^{\rm c}}]} \cong
\calP_{\set(\alpha\cdot \sigma^{-1})}  = \calP_{\set(\alpha^\rmr)} \cong \bfS^{w_0}_{\alpha}.\qedhere
\]
\end{proof}

\begin{remark}\hfill
\begin{enumerate}[label = {\rm(\alph*)}]
\item 
Let $R$ be a left artin ring and $P$ a finitely generated indecomposable projective $R$-module. 
It is known that $P/M$ is an indecomposable $R$-module for each submodule $M$ of $P$ with $M \neq P$ (for example see~\cite{95ARS}).
Therefore it follows directly from Theorem~\ref{Thm:main Sec5} that 
$\bfS^{\sigma}_{\alpha,C}$ is an indecomposable $H_n(0)$-module.
\item  
Theorem~\ref{Thm:main Sec5} is not valid for a non-canonical class in general. For instance, let $E$ be the class generated by the source tableau
\begin{displaymath}
\ytableausetup{mathmode, boxsize=1.2em}
\tau_0 = 
\begin{array}{c}
\begin{ytableau}
4 & 3 \\
5 & 2 \\
1 
\end{ytableau}
\end{array}.
\end{displaymath}
If the projective cover of $\bfS^{231}_{(2,2,1),E}$ is a PIM, then $\TOP(\bfS^{231}_{(2,2,1),E})$ is isomorphic to $\SOC(\theta[\bfP_{(2,3)}]) \cong \TOP(\theta[\bfP_{(3,2)}])$. 
Here $\SOC(\theta[\bfP_{(2,3)}])$ denotes the \emph{socle} of $\theta[\bfP_{(2,3)}]$, the sum of all simple submodules of $\theta[\bfP_{(2,3)}]$.
However, one can easily see that there are no surjective $H_5(0)$-module homomorphisms from $\theta[\bfP_{(3,2)}]$ to $\bfS^{231}_{(2,2,1),E}$.
\end{enumerate}
\end{remark}

\appendix 
\counterwithin{theorem}{section}
\section{The roadmap to the proof of Theorem~\ref{thm: eq class indecomp}}
\label{sec: indecomp}
The proof of~\cite[Theorem 4.11]{19Konig}, which amounts to Theorem~\ref{thm: eq class indecomp} when $\sigma = \id$, is organized in a very complicated way and depend on many preliminary arguments. For readers' understanding, we give the roadmap to the proof of~\cite[Theorem 4.11]{19Konig} (see {\sc Figure}~\ref{Fig:roadmapA}).

\begin{figure}[h]
\begin{displaymath}
\begin{tikzpicture}[node distance=.6cm, start chain=going below]
\node[punktchain] (A2) 
{\cite[Theorem 4.11]{19Konig}}
child[<-,thick]{node[small punktchain, join] (B2)      {\cite[Lemma 4.9]{19Konig}}
child{node[small punktchain, join] (C2) {\cite[Lemma 4.3]{19Konig}}
child{node[punktchain, join] (D2) {\cite[Theorem 2.19]{19Konig}}}}}
child[<-,thick]{node[punktchain, left=1.5cm of B2] (B1) {\cite[Theorem 2.23]{19Konig}}}
child[<-,thick]{node[punktchain, right=1.2cm of B2] (B3) {\cite[Corollary 2.24]{19Konig}}
child{node[small punktchain, join] (C3) 
{\cite[Lemma 2.2]{19Konig}}}};
\draw[|-,-|,->,thick]  (B1.south) |-+(0,-.35cm)-| (B3.south);
\node[small punktchain, right=1.25cm of A2,<-,thick] (A3)  
{\cite[Lemma 4.1]{19Konig}};
\draw[->,thick]  (A3.west)--(A2.east);
\node[small punktchain, left=2cm of C2,<-,thick,fill=black!10] (C1) {\cite[Lemma 4.7]{19Konig}}
child{node[punktchain, join,fill=black!10] (D1) {\cite[Proposition 3.8]{19Konig}}};
\draw[<-,thick]  ([xshift=0]C1.east) -- (C2.west);
\node[small punktchain] (E1)   {\cite[Lemma 4.5]{19Konig}}
child{node[small punktchain, join] (F1)   {\cite[Lemma 4.4]{19Konig}}};
\draw[->,thick]  (F1.west)-- ([xshift=-25]F1.west) -- ([xshift=-25]C1.west) -- (C1.west);
\draw[->,thick]  ([yshift=5]C1.east)--([xshift=20,yshift=5]C1.east)--
([xshift=-37,yshift=3]B2.west)--([yshift=3]B2.west);
\draw[->,thick]  (E1.east) -- ([xshift=33]E1.east)
-- ([xshift=-24,yshift=-5]B2.west) -- ([yshift=-5]B2.west);
\draw[->,thick]  ([yshift=5]E1.east) -- ([xshift=20,yshift=5]E1.east)
-- ([xshift=20,yshift=-5]C1.east) -- ([yshift=-5]C1.east);
\node[punktchain, right=0.7cm of D2,<-,thick] (D3) {\cite[Proposition 2.25]{19Konig}}
child{node[small punktchain, join] (E3) 
{\cite[Lemma 2.21]{19Konig}}};
\draw[->,thick]  (D3.west)--([yshift=-12]B2.east);
\end{tikzpicture}
\end{displaymath}
\caption{The roadmap to the proof of~\cite[Theorem 4.11]{19Konig}}
\label{Fig:roadmapA}
\end{figure}

For the proof of Theorem~\ref{thm: eq class indecomp}, we need to extend these arguments to SPCTs.
After careful examination,
we see that this can be done successfully without difficulty except for \cite[Proposition 3.8]{19Konig} and \cite[Lemma 4.7]{19Konig}.
Since \cite[Proposition 3.8]{19Konig} does not work for SPCTs, we design Lemma~\ref{Lem:App1} as an alternative. Using this, we make an SPCT version Lemma~\ref{lem: modifeid 4.7} of~\cite[Lemma 4.7]{19Konig}.
With this modification, we can prove Theorem~\ref{thm: eq class indecomp} following the roadmap in {\sc Figure}~\ref{Fig:roadmapA}.

The rest of this section is devoted to the verification of Lemma~\ref{Lem:App1} and Lemma~\ref{lem: modifeid 4.7}.
The lemmas necessary for the proof will be stated without proof since they can be extended to SPCTs straightforwardly.
From now on, we assume that $\alpha \models n$, $\sigma \in \SG_{\ell(\alpha)}$, and $E \in \mathcal{E}^\sigma(\alpha)$.

\begin{lemma}\label{Lemma32}
{\rm (cf.~\cite[Lemma 4.3]{15TW})} 
Let $\tau, \tau' \in E$ be such that $\pi_{j_1}  \cdots \pi_{j_r}\cdot \tau = \tau'$ for some $j_1, \ldots, j_r \in [n-1]$. 
If $s_{i_1} \cdots s_{i_p}$ is a reduced expression for ${\rm col}_{\tau'} ({\rm col}_{\tau})^{-1}$, then $\pi_{i_1} \cdots \pi_{i_p} \cdot \tau = \tau'$. 
\end{lemma}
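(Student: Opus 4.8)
The plan is to track how the column word $\col_\tau$ transforms under the $H_n(0)$-action and then exploit the fact that, for $w\in\SG_n$, the element $\pi_w$ does not depend on the chosen reduced expression. Concretely, I would reduce the statement to exhibiting a \emph{single} reduced expression of $w:=\col_{\tau'}(\col_\tau)^{-1}$ whose associated product of generators carries $\tau$ to $\tau'$. The conclusion for the arbitrary reduced expression $s_{i_1}\cdots s_{i_p}$ then follows at once: any two reduced words for $w$ yield the same element $\pi_w$ of $H_n(0)$, hence the same vector $\pi_{i_1}\cdots\pi_{i_p}\cdot\tau=\pi_w\cdot\tau=\tau'$.

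The first key step is a local computation about a single move. Suppose $i\in\Des(\eta)$ is a nonattacking descent of some $\eta\in E$, so that $\pi_i\cdot\eta=s_i\cdot\eta$ by~\eqref{eq: Hecke action on SPCT}. Being a nonattacking descent, $i+1$ lies in a column strictly to the right of the column of $i$; since the column word reads columns from left to right, $i$ precedes $i+1$ in $\col_\eta$, i.e.\ $(\col_\eta)^{-1}(i)<(\col_\eta)^{-1}(i+1)$. Swapping $i$ and $i+1$ in the filling swaps these two values in the reading word, so $\col_{s_i\cdot\eta}=s_i\,\col_\eta$; and because $i$ precedes $i+1$ in $\col_\eta$, this left multiplication strictly increases length, $\ell(\col_{s_i\cdot\eta})=\ell(\col_\eta)+1$. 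When instead $i\notin\Des(\eta)$, $\pi_i$ fixes $\eta$ and $\col_\eta$ is unchanged. I expect this length-increasing behaviour to be the crux of the argument, since it is precisely what will force the emerging word to be reduced; everything else is bookkeeping.

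Next I would assemble these local moves. Processing $\pi_{j_1}\cdots\pi_{j_r}\cdot\tau$ from right to left produces a chain of SPCTs from $\tau$ to $\tau'$; since $\tau'\neq0$, no attacking descent is ever encountered, so each step is either inactive (fixing the running tableau) or a nonattacking-descent swap. Discarding the inactive steps, which act as the identity and so leave the chain unchanged, leaves a product $\pi_{b_m}\cdots\pi_{b_1}$ with $\pi_{b_m}\cdots\pi_{b_1}\cdot\tau=\tau'$, and by the local computation $\col_{\tau'}=s_{b_m}\cdots s_{b_1}\,\col_\tau$ with the length rising by exactly $1$ at each of the $m$ active steps. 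Hence $\ell(\col_{\tau'})=\ell(\col_\tau)+m$; combined with $\ell(w\,\col_\tau)\le\ell(w)+\ell(\col_\tau)$ and the trivial bound $\ell(w)\le m$, this forces $\ell(w)=m$, so that $s_{b_m}\cdots s_{b_1}$ is a reduced expression for $w=\col_{\tau'}(\col_\tau)^{-1}$ and $\pi_{b_m}\cdots\pi_{b_1}=\pi_w$. Applying well-definedness of $\pi_w$ to the given reduced word $s_{i_1}\cdots s_{i_p}$ then gives $\pi_{i_1}\cdots\pi_{i_p}\cdot\tau=\pi_w\cdot\tau=\tau'$, as claimed. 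The only point demanding genuine care is the passage from ``nonattacking descent'' to ``$i$ precedes $i+1$ in the reading word''; this rests solely on the columnwise reading convention and therefore carries over verbatim from the $\SRCT$ setting, the first-column condition distinguishing SPCTs from reverse composition tableaux playing no role here.
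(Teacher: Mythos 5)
Your proof is correct and takes essentially the intended approach: the paper states this lemma without proof, deferring to \cite[Lemma 4.3]{15TW}, and the argument there is exactly yours --- each active step is a nonattacking-descent swap that multiplies the column word on the left by $s_i$ and raises its length by one, a length count then shows the word formed by the active steps is reduced, and independence of $\pi_w$ from the choice of reduced expression finishes the claim. Your closing observation that the first-column (type) condition plays no role is precisely why the paper regards the extension from SRCTs to SPCTs as straightforward.
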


Define a partial order $\preccurlyeq$ on $\SPCTsa$ by
$$
\tau \preccurlyeq \tau' 
\quad \text{if and only if} \quad 
\text{$\tau' = \pi_\gamma \cdot \tau$ for some $\gamma\in\SG_n$.}
$$
\begin{lemma}{\rm (cf.~\cite[Theorem 2.23]{19Konig})}
\label{Prop:Thm618}
Given $E \in \mathcal{E}^\sigma(\alpha)$, let $\tau_0$ and $\tau_1$ be the source tableau and the sink tableau in $E$, respectively.
Then the poset $(E,  \preccurlyeq )$ has the structure of a graded lattice and is isomorphic to the subinterval
$[{\rm col}_{\tau_0}, {\rm col}_{\tau_1}]$ under the weak Bruhat order $\leq$ on $\SG_n$.
\end{lemma}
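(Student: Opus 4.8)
The plan is to realize the column-word map $\Theta\colon E \to \SG_n$, $\tau \mapsto {\rm col}_\tau$, as a poset isomorphism from $(E,\preccurlyeq)$ onto the interval $[{\rm col}_{\tau_0},{\rm col}_{\tau_1}]$ of $\SG_n$ equipped with the left weak Bruhat order (the convention for which covers are left multiplications, matching the reading word), and then to transport the lattice structure of the interval back to $E$. First I would record that $\Theta$ is injective: the shape $\alpha$ together with the reading word ${\rm col}_\tau$ determines $\tau$, since the column lengths of $\tcd(\alpha)$ prescribe how to redistribute the letters of ${\rm col}_\tau$ into columns, filling top to bottom and left to right. (All members of $E$ moreover share one standardized column word, but injectivity needs only the shape.) Thus $\Theta$ is a bijection onto its image, and the problem reduces to identifying that image and comparing the two orders.

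The engine of the comparison is the effect of a single cover. If $i \in \Des(\tau)$ and $i,i+1$ are nonattacking, then $\pi_i \cdot \tau = s_i \cdot \tau$ by~\eqref{eq: Hecke action on SPCT}, and swapping the values $i,i+1$ alters the reading word by left multiplication, ${\rm col}_{s_i\cdot\tau}=s_i\,{\rm col}_\tau$. Since a nonattacking descent forces $i+1$ to lie strictly to the right of $i$, the value $i$ precedes $i+1$ in ${\rm col}_\tau$, whence $\ell(s_i\,{\rm col}_\tau)=\ell({\rm col}_\tau)+1$; that is, each edge of the colored graph on $E$ is an upward covering step in the left weak order. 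To promote this to the equivalence $\tau\preccurlyeq\tau' \iff {\rm col}_\tau\le{\rm col}_{\tau'}$ with $\ell({\rm col}_{\tau'})=\ell({\rm col}_\tau)+\ell\big({\rm col}_{\tau'}({\rm col}_\tau)^{-1}\big)$, I would invoke Lemma~\ref{Lemma32}: given $\tau'=\pi_\gamma\cdot\tau$, a reduced word $s_{i_1}\cdots s_{i_p}$ for ${\rm col}_{\tau'}({\rm col}_\tau)^{-1}$ already satisfies $\pi_{i_1}\cdots\pi_{i_p}\cdot\tau=\tau'$, and reducedness forces each $\pi_{i_k}$ to act as a genuine swap (an idle or annihilating factor would shorten the product), so the lengths add. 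The reverse containment is Lemma~\ref{Lemma32} read directly. Hence $\Theta$ is simultaneously order-preserving and order-reflecting, and $(E,\preccurlyeq)$ is graded by $\ell({\rm col}_\tau)-\ell({\rm col}_{\tau_0})$.

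It remains to pin down the image. By Lemma~\ref{lem: unique source and sink} and the graph structure, the source $\tau_0$ is the unique minimum and the sink $\tau_1$ the unique maximum of $(E,\preccurlyeq)$, so the image lies inside $[{\rm col}_{\tau_0},{\rm col}_{\tau_1}]$; the reverse inclusion I would prove by induction on $\ell(w)-\ell({\rm col}_{\tau_0})$, lifting each cover $u\lessdot w$ lying in the interval to a swap $\tau\mapsto\pi_i\cdot\tau$ within $E$. Granting surjectivity, the final step is formal: the left weak order on $\SG_n$ is a lattice, every interval of a lattice is again a graded lattice, and $\Theta$ carries this structure onto $(E,\preccurlyeq)$. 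The main obstacle is precisely this surjectivity, equivalently the order-convexity of the image: one must rule out covers $w=s_i\,{\rm col}_\tau\le{\rm col}_{\tau_1}$ for which $i$ is an \emph{attacking} descent of $\tau$ (so that $\pi_i\cdot\tau=0$ and no tableau is produced). This is exactly the step where K\"onig's argument appeals to \cite[Proposition 3.8]{19Konig}, which breaks down once the first column is permuted; following the appendix, I would substitute Lemma~\ref{Lem:App1} here, after which the induction proceeds as in the proof of \cite[Theorem 2.23]{19Konig}.
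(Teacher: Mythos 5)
Your skeleton is, in outline, the right one and matches what the cited source does: for fixed shape the column word determines the tableau, a nonattacking descent $i$ turns an edge of the colored graph into a left-multiplication cover ${\rm col}_\tau \mapsto s_i\,{\rm col}_\tau$ with $\ell(s_i\,{\rm col}_\tau)=\ell({\rm col}_\tau)+1$, and Lemma~\ref{Lemma32} upgrades single covers to additivity of lengths along any relation $\tau\preccurlyeq\tau'$. (One small slip: ``the reverse containment is Lemma~\ref{Lemma32} read directly'' is backwards, since that lemma takes $\tau\preccurlyeq\tau'$ as a \emph{hypothesis}; order-reflection only falls out after surjectivity and gradedness are established.) Note that the paper itself offers no argument for this lemma: it is one of the statements declared to carry over to SPCTs verbatim from \cite[Theorem 2.23]{19Konig}, the whole point of the appendix being to isolate the two statements of K\"onig that do \emph{not} carry over.

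The genuine gap is in your final paragraph, and it is fatal as written: you close the surjectivity step (ruling out covers $s_i\,{\rm col}_\tau\le{\rm col}_{\tau_1}$ in which $i$ is an attacking descent of $\tau$) by invoking Lemma~\ref{Lem:App1}. But in this paper Lemma~\ref{Lem:App1} is proved \emph{from} Lemma~\ref{Prop:Thm618}: its proof uses the very statement you are proving, both to get that $(E,\preccurlyeq)$ has finite rank and to identify covers in $E$ with prefixes of reduced words for $\rho_{\tau'}$. So your proposal is circular within the paper's logical structure. The premise behind the patch is also inconsistent with the paper: by the roadmap in {\sc Figure}~\ref{Fig:roadmapA}, \cite[Proposition 3.8]{19Konig} enters K\"onig's development only through \cite[Lemma 4.7]{19Konig} (whose SPCT replacement is Lemma~\ref{lem: modifeid 4.7}), on the way to \cite[Theorem 4.11]{19Konig}; it is not used in the proof of \cite[Theorem 2.23]{19Konig} at all. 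That is exactly why the paper can state the present lemma without proof and then use it to prove Lemma~\ref{Lem:App1}. To repair your argument you must finish the convexity/surjectivity step by K\"onig's original argument for \cite[Theorem 2.23]{19Konig} (which extends to SPCTs unchanged), or by some argument independent of Lemma~\ref{Lem:App1}; as it stands, the two proofs cannot coexist in the document.
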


\begin{lemma}
\label{Lem:App1}
For any $\tau \in E$ with $\tau \neq \tau_0$, set 
\begin{equation}\label{Def:i}
\rmd(\tau) :=  \max\left\{ 1 \leq k \leq n-1   \mid  \tau_0^{-1}(k) \neq \tau^{-1}(k) \right\} \quad \text{and} \quad \rho_\tau := {\rm col}_{\tau}  ({\rm col}_{\tau_0})^{-1}.
\end{equation}
If $s_{i_p} \cdots s_{i_1}$ is a reduced expression for $\rho_\tau$,
then $i_q < \rmd(\tau)$ for $q = 1,2,\ldots, p$.
\end{lemma}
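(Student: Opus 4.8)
The plan is to compute the permutation $\rho_\tau$ explicitly and then recognize it as an element of a standard parabolic subgroup of $\SG_n$, so that the bound on the letters of a reduced expression becomes a statement about which simple reflections can occur.

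First I would unwind the column word. Since every tableau in $E$ has the same shape $\alpha$, the boxes of $\tcd(\alpha)$ carry a fixed linear order $b_1, b_2, \ldots, b_n$ obtained by reading column by column from left to right and top to bottom within each column; write $\beta\colon [n]\to\tcd(\alpha)$, $j\mapsto b_j$, for the associated bijection, which depends only on $\alpha$. Regarding an SPCT as a map from its boxes to $[n]$, the one-line notation of $\col_\tau$ is precisely $\tau(b_1)\,\tau(b_2)\cdots\tau(b_n)$, so $\col_\tau=\tau\circ\beta$ as permutations. Because $\tau$ and $\tau_0$ use the same $\beta$, this gives
\[
\rho_\tau=\col_\tau\,(\col_{\tau_0})^{-1}=(\tau\circ\beta)(\tau_0\circ\beta)^{-1}=\tau\circ\tau_0^{-1},
\]
that is, $\rho_\tau(k)=\tau\big(\tau_0^{-1}(k)\big)$: the image of $k$ is the entry that $\tau$ places in the box where $\tau_0$ records $k$. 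This identity is the computational heart of the argument.

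Second, I would check that the truncation to $k\le n-1$ in the definition of $\rmd(\tau)$ costs nothing, i.e. that $\rmd(\tau)$ equals the true maximum of $\{k:\tau_0^{-1}(k)\neq\tau^{-1}(k)\}$; equivalently, that the box of the largest entry $n$ is the same in every tableau of $E$. The entries of an SPCT strictly decrease along each row, so the global maximum $n$ is the leftmost entry of its row and hence lies in column $1$ of every SPCT. An entry changes position only along an edge of the colored graph on which some $\pi_i$ acts as $s_i$, so $n$ can move only if $\pi_{n-1}$ swaps $n-1$ and $n$ somewhere. But with $n$ in column $1$: if $n-1$ lies in the same row as $n$ it sits strictly to the right, so $n$ is not weakly right of $n-1$, whence $n-1\notin\Des(\tau)$ and $\pi_{n-1}\cdot\tau=\tau$; otherwise $n-1$ is the maximum of its own row, hence also in column $1$, so $n-1$ and $n$ share a column, are attacking, and $\pi_{n-1}\cdot\tau=0$. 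In neither case does $\pi_{n-1}$ swap them, so no edge of $E$ moves $n$ and $\tau_0^{-1}(n)=\tau^{-1}(n)$ throughout $E$.

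Third, I would assemble the conclusion. For every $k>\rmd(\tau)$ (now including $k=n$) we have $\tau_0^{-1}(k)=\tau^{-1}(k)$, so by the first step $\rho_\tau(k)=\tau(\tau^{-1}(k))=k$. Thus $\rho_\tau$ fixes $\{\rmd(\tau)+1,\ldots,n\}$ pointwise and therefore lies in the standard parabolic subgroup $\SG_{n,\{1,\ldots,\rmd(\tau)-1\}}=\langle s_1,\ldots,s_{\rmd(\tau)-1}\rangle$. Finally I would invoke the standard fact that the set of simple reflections occurring in a reduced word is an invariant of the element---being preserved by the braid and commutation moves relating any two reduced words---and that this support is contained in $J$ exactly when the element lies in the parabolic subgroup $W_J$. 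Applying this to $\rho_\tau$ yields $i_q\le\rmd(\tau)-1<\rmd(\tau)$ for every letter of any reduced expression $s_{i_p}\cdots s_{i_1}$ of $\rho_\tau$, as required. I expect the only real friction to be the second step---ensuring the definitional cap at $n-1$ loses nothing---while the first-step identification merely demands care with the one-line-notation convention and the order of composition, and the closing parabolic argument is routine.
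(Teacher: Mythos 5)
Your proof is correct, but it takes a genuinely different route from the paper's. The paper proves the lemma by induction on $\ell(\rho_\tau)$, using the graded-lattice structure of $(E,\preccurlyeq)$ (Lemma~\ref{Prop:Thm618}) to identify prefixes of a reduced word with a saturated chain of tableaux in $E$, and then tracking how $\rmd(\cdot)$ changes across each cover: either $\rmd$ is unchanged and the new letter is forced below it, or $\rmd$ jumps to exactly $i_{p+1}+1$. You instead compute $\rho_\tau$ outright: since $\col_\tau = \tau\circ\beta$ for the fixed column-reading bijection $\beta$, you get $\rho_\tau = \tau\circ\tau_0^{-1}$, so $\rho_\tau$ fixes $k$ precisely when $k$ occupies the same box in $\tau$ and $\tau_0$; you then observe that $n$ always sits in the first column (rows strictly decrease), so $\pi_{n-1}$ never acts as a swap on any SPCT, whence $n$ occupies the same box throughout the connected component $E$ and $\rho_\tau$ fixes $\{\rmd(\tau)+1,\dots,n\}$ pointwise; finally you invoke the standard Coxeter facts that such a permutation lies in the parabolic subgroup $\langle s_1,\dots,s_{\rmd(\tau)-1}\rangle$ and that the support of a reduced word is an invariant of the group element contained in $J$ exactly for elements of $W_J$. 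Your argument buys two things: it makes the conclusion structurally transparent ($\rho_\tau$ literally lives in a parabolic subgroup, so the bound holds for every reduced expression at once), and it explicitly resolves the subtlety created by the cap $k\le n-1$ in the definition of $\rmd(\tau)$ --- namely that $n$ cannot move within $E$ --- a point the paper's induction handles only implicitly. What the paper's approach buys is economy within its own framework: it reuses the weak-order interval isomorphism of Lemma~\ref{Prop:Thm618}, which is needed elsewhere in the appendix anyway, and requires no auxiliary facts about supports of reduced words. Note that your second step does rely on $E$ being connected under the $\pi_i$-action (every $\tau\in E$ equals $\pi_\gamma\cdot\tau_0$, with all intermediate products nonzero); this is available from Lemma~\ref{Prop:Thm618}, so your use of it is legitimate, but it is worth citing explicitly.
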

\begin{proof}
We use induction on $\ell(\rho_\tau)$.
From Lemma~\ref{Prop:Thm618} it follows that
the poset $(E, \preccurlyeq )$ has a finite rank $f$.
If $\ell(\rho_\tau) = 1$, that is, $\rho_\tau = \pi_i$ for some $1 \le i \le n-1$, 
then the assertion is true since $\rmd(\tau) = i+1$.
Suppose that the assertion is true for all $\tab \in E$ with $\ell(\rho_\tau) = p$ for $1 \leq p < f$.
Let $\tab' \in E$ with $\ell(\rho_{\tau'}) = p+1$ and $s_{i_{p+1}} \cdots s_{i_1}$ be a reduced expression of $\rho_{\tab'}$.
By Lemma~\ref{Prop:Thm618}, we see that $\tab' = \pi_{i_{p+1}} \cdot \tab$ and $s_{i_p} \cdots s_{i_1}$ is a reduced expression of $\rho_{\tab}$. 
If $\rmd(\tau') = \rmd(\tau)$, then $i_{p+1} < \rmd(\tau)$.
Otherwise, by the definition~\eqref{Def:i}, we see that $\rmd(\tau') > \rmd(\tau)$.
Combining this with the equality $\{k \mid \tau^{-1}(k) \neq \tau'^{-1}(k)\} = \{ i_{p+1}, i_{p+1}+1\}$ yields
that $\rmd(\tau') = i_{p+1}+1$.
Now the assertion follows from the induction hypothesis.
\end{proof}

\begin{lemma}{\rm (cf.~\cite[Lemma 4.3]{19Konig})}
\label{Lem:Lemma43}
For any $\tau \in E \setminus \{\tau_0\}$,
if $\Des(\tau) \subseteq \Des(\tau_0)$, then $\rmd(\tau) \in \Des(\tau_0)$.
\end{lemma}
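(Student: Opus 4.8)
The plan is to argue by contradiction: assuming $\Des(\tau)\subseteq\Des(\tau_0)$ but $d:=\rmd(\tau)\notin\Des(\tau_0)$, I would track the box occupied by the entry $d$ and show that it is forced to move strictly leftward as one passes from $\tau_0$ to $\tau$, which is incompatible with the position dictated by $d\notin\Des(\tau)$. First I would record the two positional facts that come for free. Note $\tau\neq\tau_0$ forces $d\ge 2$ (if entries $\ge 2$ all agreed with $\tau_0$, then so would $1$). Since $d\le n-1$ and $\tau_0$ is a source tableau, Definition~\ref{def: source and sink}(1) applied to $d\notin\Des(\tau_0)$ places $d+1$ immediately to the left of $d$ in $\tau_0$; say $d+1$ sits at $(r,c)$ and $d$ at $(r,c+1)$. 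Because $d+1>d=\rmd(\tau)$, the definition of $\rmd$ gives $\tau^{-1}(d+1)=\tau_0^{-1}(d+1)=(r,c)$, so $d+1$ occupies the same box in $\tau$. On the other hand, from $\Des(\tau)\subseteq\Des(\tau_0)$ and $d\notin\Des(\tau_0)$ we obtain $d\notin\Des(\tau)$, which means $d+1$ lies strictly left of $d$ in $\tau$; hence in $\tau$ the entry $d$ occupies a box in some column $c'\ge c+1$.

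Next I would analyze how $d$ can reach that box. By Lemma~\ref{Prop:Thm618} the interval from $\tau_0$ to $\tau$ in $(E,\preccurlyeq)$ is saturated, so a reduced expression $s_{i_p}\cdots s_{i_1}$ of $\rho_\tau$ yields a chain $\tau=\pi_{i_p}\cdots\pi_{i_1}\cdot\tau_0$ in which each $\pi_{i_q}$ acts as a genuine (nonattacking descent) swap of $i_q$ and $i_q+1$. Lemma~\ref{Lem:App1} guarantees $i_q<d$ for all $q$, so $i_q\le d-1$; since $\pi_i$ swaps only $i$ and $i+1$, the only generator in the chain that can move the entry $d$ is $\pi_{d-1}$, and $d$ is never swapped with $d+1$ (consistent with $d+1$ staying put). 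The crucial observation is that whenever $\pi_{d-1}$ performs an actual swap, $d-1$ is a nonattacking descent, so $d$ lies weakly right of $d-1$ with distinct columns (the same-column case is excluded by the attacking condition), i.e.\ strictly right of $d-1$; after the swap $d$ lands in the former box of $d-1$, hence strictly to its left. Thus every move of $d$ strictly decreases its column.

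Finally I would combine the two halves. Since $d=\rmd(\tau)$ forces $\tau^{-1}(d)\neq\tau_0^{-1}(d)$, the entry $d$ is moved at least once along the chain, and by the previous step each such move strictly decreases the column of $d$. Starting from column $c+1$ in $\tau_0$, the entry $d$ therefore ends in a column $\le c$ in $\tau$, contradicting $c'\ge c+1$. Hence $d\in\Des(\tau_0)$, as required.

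The main obstacle, and the step I would check most carefully, is the claim that every genuine application of $\pi_{d-1}$ moves $d$ strictly to the left. This rests on reading off from the descent and attacking definitions that a nonattacking descent pair $(d-1,d)$ must occupy distinct columns with $d$ to the right, so the swap is column-strictly-decreasing for $d$. Everything else is bookkeeping on top of Lemmas~\ref{Prop:Thm618} and~\ref{Lem:App1}, once one notes that, among generators with index $<d$, only $\pi_{d-1}$ can touch the entry $d$.
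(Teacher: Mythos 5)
Your argument is correct, but it is worth saying up front that it is not the paper's argument, because the paper has none: Lemma~\ref{Lem:Lemma43} is stated without proof in Appendix~\ref{sec: indecomp}, the authors asserting that K\"onig's proof of the SRCT version (\cite[Lemma 4.3]{19Konig}, which per the roadmap in {\sc Figure}~\ref{Fig:roadmapA} rests on \cite[Theorem 2.19]{19Konig}) extends to SPCTs straightforwardly. You instead assemble a self-contained proof from the appendix's own tools: Lemma~\ref{Prop:Thm618} and Lemma~\ref{Lemma32} to realize $\tau$ as the result of a chain of genuine nonattacking-descent swaps applied to $\tau_0$ along a reduced word for $\rho_\tau$, and Lemma~\ref{Lem:App1} to confine every letter of that word strictly below $d=\rmd(\tau)$, so that entries larger than $d$ are frozen while the entry $d$ itself can only be moved by $\pi_{d-1}$, and each such move sends it strictly to the left. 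This is exactly the mechanism the paper reserves for Lemma~\ref{lem: modifeid 4.7}, so your route has the appeal of deriving both appendix lemmas from the single principle in Lemma~\ref{Lem:App1}; its cost is a dependence on Lemma~\ref{Lem:App1} and on the reachability statement Lemma~\ref{Prop:Thm618}, which, according to the roadmap, K\"onig's own proof of his Lemma 4.3 does not require (his Proposition 3.8, the counterpart of Lemma~\ref{Lem:App1}, feeds only into Lemma 4.7).

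One step needs tightening. Your justification that $d+1$ occupies the same box in $\tau$ as in $\tau_0$ --- ``because $d+1>d=\rmd(\tau)$, by the definition of $\rmd$'' --- fails in the edge case $d=n-1$: the maximum in \eqref{Def:i} ranges only over $1\le k\le n-1$, so maximality of $d$ says nothing about where the entry $n$ sits (the same blind spot affects your parenthetical argument that $d\ge 2$). The repair is already contained in your later step: by Lemma~\ref{Lem:App1} every letter $i_q$ of the reduced word satisfies $i_q<d$, each step of the chain genuinely swaps the entries $i_q$ and $i_q+1\le d$, and hence every entry strictly greater than $d$ --- including $n$ --- never moves. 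With this substituted for the definitional argument, your proof goes through in all cases: $d+1$ sits at $(r,c)$ in both $\tau_0$ and $\tau$ by Definition~\ref{def: source and sink}(1), the assumption $d\notin\Des(\tau)$ forces $d$ into a column $\ge c+1$ of $\tau$, whereas $d$ starts in column $c+1$ of $\tau_0$, moves at least once because $\tau^{-1}(d)\ne\tau_0^{-1}(d)$, and every move strictly decreases its column --- a contradiction.
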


\begin{lemma}{\rm (cf.~\cite[Lemma 4.5]{19Konig})}
\label{Lem:Lemma45}
For $i \in \Des(\tau_0)$, set
\begin{equation*}
\mathrm{a}(i) :=  \min \left\{ k > i  \mid i \text{ and } k \text{ are attacking in } \tau_0 \right\}.
\end{equation*}
\begin{enumerate}[label = {\rm (\alph*)}]
\item There exists at least one $k > i$ such that $i$ and $k$ are attacking in $\tau_0$. 
 
\item For all $i+1 \le k \le j-1$, $i$ and $k$ are nonattacking and $i$ is strictly left of $k$ in $\tau_0$.
\end{enumerate}
\end{lemma}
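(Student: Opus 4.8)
The plan is to follow the chain of consecutive entries $i, i+1, i+2, \ldots, n$ and track how their columns evolve, using that $\tau_0$ is a source tableau. Write $(r,c)$ for the box of $i$ and, for $t \ge 0$, let $(r_t,c_t)$ be the box of $i+t$ in $\tau_0$. The transition from $i+t$ to $i+t+1$ obeys two rules: if $i+t \in \Des(\tau_0)$ then $i+t+1$ lies weakly right of $i+t$, so $c_{t+1} \ge c_t$; if $i+t \notin \Des(\tau_0)$ then, by the defining property of a source tableau (Definition~\ref{def: source and sink}), $i+t+1$ sits immediately to the left of $i+t$, so $c_{t+1} = c_t - 1$ and $r_{t+1}=r_t$. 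Recall also that an entry $k>i$ is attacking $i$ precisely when $k$ lies in column $c$, or in column $c+1$ in a row below row $r$.

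For part (a) I would argue by contradiction, assuming no $k>i$ attacks $i$. Then no entry larger than $i$ occupies column $c$, and none occupies column $c+1$ below row $r$. Since $i \in \Des(\tau_0)$, the first step gives $c_1 \ge c$, and as column $c$ is forbidden we get $c_1 \ge c+1$. An easy induction using the two transition rules then confines the entire tail of the chain to columns $\ge c+1$: a descent step cannot decrease the column, while a non-descent step decreases it by exactly one, and landing in column $c$ is excluded. Hence every entry exceeding $i$ lies in a column $\ge c+1$. Now set $c^\ast := \min\{\,\mathrm{col}(k) \mid k > i\,\} \ge c+1$ and pick $k>i$ in column $c^\ast$; since $c^\ast \ge 2$ and composition diagrams are left-justified, the box immediately to its left exists, and as rows strictly decrease from left to right its entry is $>k>i$ while sitting in column $c^\ast-1$. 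This contradicts the minimality of $c^\ast$ (or, when $c^\ast-1=c$, the absence of a same-column attacker), so an attacking $k$ must exist.

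For part (b) the nonattacking assertion is immediate from the definition of $\mathrm{a}(i)$ as the smallest attacker. For the claim that $i$ lies strictly left of each intermediate $k$, I would rerun the confinement argument along the initial segment $i+1,\ldots,\mathrm{a}(i)-1$ only: each such entry is a nonattacker, hence avoids column $c$, so the same induction forces it into a column $\ge c+1 > c$, which is exactly the statement that $k$ lies strictly right of $i$. The remaining verifications—that the relevant left neighbors exist (left-justification), that rows strictly decrease (the entries being distinct), and that a column-$1$ entry other than $n$ is automatically a descent—are routine. The only genuine obstacle is part (a): one must exclude the possibility that the chain drifts to the right of $i$ and never returns to an attacking box, and this is precisely what the confinement-to-columns-$\ge c+1$ step followed by the leftmost-column extremal argument resolves.
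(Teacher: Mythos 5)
Your proof is correct, but there is nothing in the paper to compare it with: this lemma is one of the statements in Appendix~\ref{sec: indecomp} that the authors explicitly state \emph{without} proof, on the grounds that K\"onig's SRCT argument (\cite[Lemma 4.5]{19Konig}) ``can be extended to SPCTs straightforwardly.'' Your argument is therefore a self-contained replacement for a citation, and it goes through. The two transition rules are exactly what the definition of descent and Definition~\ref{def: source and sink} give ($c_{t+1}\ge c_t$ on a descent step; $c_{t+1}=c_t-1$ with $r_{t+1}=r_t$ on a non-descent step with $i+t\neq n$); under the no-attacker hypothesis every entry exceeding $i$ avoids column $c$ (same-column pairs attack), so the induction confining the chain $i+1,\ldots,n$ to columns $\ge c+1$ is sound; and the extremal step closes the contradiction, since the left neighbour of an entry $k>i$ of minimal column $c^\ast$ exists (the diagram is left-justified and $c^\ast\ge c+1\ge 2$), exceeds $k$ because rows of an SPCT decrease strictly, and sits in column $c^\ast-1$ --- note this contradicts the minimality of $c^\ast$ outright, so your parenthetical case split on $c^\ast-1=c$ is not even needed. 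Part (b) is the same confinement argument run only along $i+1,\ldots,\mathrm{a}(i)-1$, all of which are nonattackers of $i$ by minimality of $\mathrm{a}(i)$ (which is well defined thanks to part (a)); this is correct, and it also implicitly repairs the typo in the statement, where the undefined $j$ must be read as $\mathrm{a}(i)$, as you do. Finally, it is worth recording that your argument never invokes the type $\sigma$: only row-decrease, left-justification, and the source-tableau property enter, which is precisely why the paper's assertion that K\"onig's lemma transfers verbatim to arbitrary $\sigma$ is legitimate --- your write-up documents what the paper only asserts.
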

For any $\tau \in E \setminus \{\tau_0\}$ with $\Des(\tab) \subseteq \Des(\tab_0)$, set $\mathrm{a}(\tau) := \mathrm{a}(\mathrm{d}(\tau))$. This is well-defined by Lemma~\ref{Lem:Lemma43} and Lemma~\ref{Lem:Lemma45}.

\begin{lemma}{\rm (cf.~\cite[Lemma 4.7]{19Konig})}\label{lem: modifeid 4.7}
Let $\tau \in E \setminus \{\tau_0\}$ with $\Des(\tab) \subseteq \Des(\tab_0)$.
For each $\rmd(\tau)+1 \le k \le \rma(\tau)$, 
$\rmd(\tau)$ and $k$ are nonattacking and $\rmd(\tau)$ is strictly left of $k$ in $\tau$.
\end{lemma}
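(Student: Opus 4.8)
The plan is to pin down the box of the entry $\rmd(\tau)$ in $\tau$ relative to $\tau_0$, and then read off the conclusion from the known local structure of $\tau_0$ around $\rmd(\tau)$. Throughout I write $d := \rmd(\tau)$ and $a := \rma(\tau) = \rma(d)$; by Lemma~\ref{Lem:Lemma43} we have $d\in\Des(\tau_0)$, so $a$ is well defined, and Lemma~\ref{Lem:Lemma45} records that in $\tau_0$ the entries $d+1,\ldots,a-1$ all lie strictly right of $d$ and are nonattacking with $d$, while $d$ and $a$ are attacking. Since $d=\rmd(\tau)$, every entry exceeding $d$ occupies the same box in $\tau_0$ and in $\tau$; hence for $d+1\le k\le a$ the mutual position of $k$ and the fixed boxes is identical in both tableaux, and only the box of $d$ can have moved.

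First I would locate $d$ in $\tau$. By Lemma~\ref{Prop:Thm618} the tableau $\tau$ is reached from the source $\tau_0$ by a saturated chain of honest applications of operators $\pi_{i_1},\ldots,\pi_{i_p}$, and by Lemma~\ref{Lemma32} their labels may be taken to form a reduced word for $\rho_\tau=\col_{\tau}(\col_{\tau_0})^{-1}$. By Lemma~\ref{Lem:App1} every $i_q\le d-1$, so no box of an entry $>d$ is ever touched (consistent with the definition of $d$) and the entry $d$ participates only in swaps $\pi_{d-1}$, always as the larger of the two swapped entries. Because a nonattacking descent forces the larger entry to sit in a strictly greater column than the smaller one, each such swap sends $d$ into a strictly smaller column; as $d$ is never involved in a $\pi_d$, it can only travel leftward. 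Since $\tau^{-1}(d)\neq\tau_0^{-1}(d)$, at least one such swap occurs, so the column $c'$ of $d$ in $\tau$ is strictly smaller than its column $c^\ast$ in $\tau_0$.

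With this in hand the generic cases are immediate. For $d<k<a$, Lemma~\ref{Lem:Lemma45} gives that $k$ lies in a column exceeding $c^\ast$ in $\tau_0$, hence also in $\tau$; combined with $c'<c^\ast$ this yields a column gap of at least two between $d$ and $k$, so $d$ is strictly left of $k$ and they are nonattacking. The same reasoning disposes of $k=a$ whenever $d$ and $a$ are attacking in $\tau_0$ through adjacent columns (then $a$ sits in column $c^\ast+1$, again a gap of at least two) or whenever $d$ moves at least two columns left.

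The one delicate situation, and the main obstacle, is $k=a$ when $d$ and $a$ share the column $c^\ast$ in $\tau_0$ and $d$ moves exactly one column left, so that in $\tau$ the boxes of $d$ and $a$ become horizontally adjacent (columns $c^\ast-1$ and $c^\ast$). Strict-leftness is then clear, and it remains to rule out that $a$ is lower-right of $d$. A first reduction: the box immediately right of $d$ in $\tau$, if it exists, carries an entry $\le d<a$ by the weakly-decreasing row condition, so $a$ occupies column $c^\ast$ in a row different from that of $d$; the whole difficulty is to exclude that this row is strictly lower. I would settle this by combining the triple condition in $\tau$ applied across columns $c^\ast-1,c^\ast$ with the minimality built into $a=\rma(d)$ (by Lemma~\ref{Lem:Lemma45} no entry with value strictly between $d$ and $a$ lies in column $c^\ast$ of $\tau_0$), which is precisely where \cite[Lemma 4.7]{19Konig} invokes the source-tableau structure. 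This boundary analysis is the only point at which the passage from reverse composition tableaux to SPCTs must be checked by hand; granting it, assembling the three cases completes the proof.
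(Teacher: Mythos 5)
Your setup agrees with the paper's: both write $\tau=\pi_{i_p}\cdots\pi_{i_1}\cdot\tau_0$ for a reduced word of ${\rm col}_\tau({\rm col}_{\tau_0})^{-1}$ (Lemmas~\ref{Lemma32} and~\ref{Prop:Thm618}) and use Lemma~\ref{Lem:App1} to get $i_q\le \rmd(\tau)-1$, so that every entry exceeding $d:=\rmd(\tau)$ is frozen while $d$ itself can only travel strictly leftward; your column-gap argument then correctly disposes of every case except one. But the remaining case --- $d$ and $a:=\rma(\tau)$ in the same column $c^{\ast}$ of $\tau_0$ with $d$ moving exactly one column left --- is never proved: you write ``granting it, assembling the three cases completes the proof,'' which is an assumption, not an argument. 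That case is precisely where the genuine SPCT combinatorics enters; it is the content the paper imports wholesale from the proof of \cite[Lemma~4.7]{19Konig} (the paper's proof consists of the reduced-word setup, the inequality $i_q\le\rmd(\tau)-1$ via Lemma~\ref{Lem:App1}, and then a verbatim appeal to K\"onig's argument). So the proposal is incomplete exactly at its crux.

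Moreover, the hint you give for closing the gap would not work as stated. The triple condition in $\tau$ applied across columns $c^{\ast}-1,c^{\ast}$ to the boxes of $d$ and $a$ is vacuous: its hypothesis requires the upper-left entry to exceed the lower-right one, whereas here $d<a$. Also, the fact you extract from Lemma~\ref{Lem:Lemma45} (``no entry of value strictly between $d$ and $a$ lies in column $c^{\ast}$ of $\tau_0$'') is weaker than what is needed. A correct finish runs as follows. Since the net displacement of $d$ is one column and each swap moves it at least one column left, exactly one swap (with $d-1$) moved $d$; the nonattacking requirement of that swap forces the new row $r_1$ of $d$ to lie strictly below its original row $r_0$. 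Now suppose the row $r_a$ of $a$ satisfied $r_a>r_1$; then $r_a>r_0$, and the triple condition in $\tau_0$ applied to rows $r_0<r_a$ across columns $c^{\ast}-1,c^{\ast}$ forces the entry $y$ immediately left of $d$'s original box (it exists because $c^{\ast}\ge 2$, as $d$ moved left) to satisfy $d<y<a$. But $y$ lies strictly left of $d$ in $\tau_0$, contradicting Lemma~\ref{Lem:Lemma45}(b), which places every entry of value strictly between $d$ and $a$ strictly right of $d$. Hence $r_a<r_1$, so $a$ is not lower-right of $d$ in $\tau$ and the case closes. Some such argument (using the swap structure and the full strength of Lemma~\ref{Lem:Lemma45}(b), not just the column-$c^{\ast}$ statement) must be supplied; without it the lemma is not established.
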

\begin{proof}
By Lemma~\ref{Lemma32}, we can write $\tau$ as $\pi_{i_p} \cdots \pi_{i_1} \cdot \tau_0$, where $s_{i_p} \cdots s_{i_1}$ is a reduced expression of ${\rm col}_\tau {\rm col}^{-1}_{\tau_0}$.
The proof can be done essentially in the same way as in~\cite[Lemma 4.7]{19Konig}.
The only difference lies in the manner to derive the inequality 
$$
i_q \leq \rmd(\tau)-1 \qquad \text{for } 1 \leq q \leq p.
$$
It follows from Lemma~\ref{Lem:App1}, whereas it follows from~\cite[Proposition 3.8]{19Konig} in~\cite{19Konig}.
\end{proof}

\noindent {\bf Acknowledgments.}
The authors are grateful to the anonymous referees for their careful readings of the manuscript and valuable advice.

\bibliographystyle{abbrv}
\bibliography{references}
	
\end{document}